\DeclareFontFamily{U}{rsfs}{%
\skewchar\font127}
\DeclareFontShape{U}{rsfs}{m}{n}{%
<-6>rsfs5<6-8.5>rsfs7<8.5->rsfs10}{}
\DeclareSymbolFont{rsfs}{U}{rsfs}{m}{n}
\DeclareRobustCommand*\rsfs{%
\@fontswitch\relax\mathrsfs}
\theoremstyle{plain}
\newtheorem{thm}{Theorem}[section]
\newtheorem*{thm*}{Theorem}
\newtheorem{prop}[thm]{Proposition}
\newtheorem{lem}[thm]{Lemma}
\newtheorem{defi}[thm]{Definition}
\newtheorem{rmk}[thm]{Remark}
\newtheorem{cor}[thm]{Corollary}
\newtheorem*{cor*}{Corollary}
\newtheorem{prop-defi}[thm]{Proposition-Definition}
\newtheorem{thm-defi}[thm]{Theorem-Definition}
\newtheorem{defi-thm}[thm]{Definition-Theorem}
\newtheorem{lem-defi}[thm]{Lemma-Definition}
\newtheorem*{question*}{Question}
\newtheorem{conj}[thm]{Conjecture}
\newtheorem{setup-def}[thm]{Setup-Definition}
\newdimen\argwidth
\def\db[#1\db]{
 \setbox0=\hbox{$#1$}\argwidth=\wd0
 \setbox0=\hbox{$\left[\box0\right]$}
  \advance\argwidth by -\wd0
 \left[\kern.3\argwidth\box0 \kern.3\argwidth\right]}
\newcommand{\aA}{\mathcal{A}}
\newcommand{\bB}{\mathcal{B}}
\newcommand{\cC}{\mathcal{C}}
\newcommand{\eE}{\mathcal{E}}
\newcommand{\fF}{\mathcal{F}}
\newcommand{\hH}{\mathcal{H}}
\newcommand{\kK}{\mathcal{K}}
\newcommand{\mM}{\mathcal{M}}
\newcommand{\nN}{\mathcal{N}}
\newcommand{\oO}{\mathscr{O}}
\newcommand{\uU}{\mathcal{U}}
\newcommand{\Ob}{\mathcal{O}b}
\newcommand{\bL}{\mathbb{L}}
\newcommand{\bQ}{\mathbb{Q}}
\newcommand{\fG}{\mathfrak{g}}
\newcommand{\fc}{\mathfrak{c}}
\newcommand{\fn}{\mathfrak{n}}
\renewcommand{\tilde}{\widetilde}
\newcommand{\coker}{\mathop{\rm coker}\nolimits}
\newcommand{\lr}{\longrightarrow}
\newcommand{\id}{\textrm{id}}
\newcommand{\ch}{\mathop{\rm ch}\nolimits}
\newcommand{\td}{\mathop{\rm td}\nolimits}
\newcommand{\Spec}{\mathop{\rm Spec}\nolimits}
\newcommand{\Coh}{\mathop{\rm Coh}\nolimits}
\newcommand{\Sym}{\mathop{\rm Sym}\nolimits}
\newcommand{\GL}{\mathop{\rm GL}\nolimits}
\newcommand{\bC}{\mathbb{C}}
\newcommand{\bT}{\mathbb{T}}
\def\lal{_\lambda}
\def\vir{\mathrm{\vir}}
\def\loc{\mathrm{\loc}}
\def\lra{\longrightarrow}
\def\sO{\mathscr O}
\def\lalp{_\alpha}
\def\virt{^{\mathrm{vir}}}
\def\beq{\begin{equation}}
\def\eeq{\end{equation}}
\def\lalp{_\alpha }
\def\lab{_{\alpha\beta}}
\def\lbet{_\beta}
\def\mapright#1{\,\smash{\mathop{\lra}\limits^{#1}}\,}
\def\loc{{\mathrm{loc}}}
\def\@tocline#1#2#3#4#5#6#7{\relax
  \ifnum #1>\c@tocdepth % then omit
  \else
    \par \addpenalty\@secpenalty\addvspace{#2}%
    \begingroup \hyphenpenalty\@M
    \@ifempty{#4}{%
      \@tempdima\csname r@tocindent\number#1\endcsname\relax
    }{%
      \@tempdima#4\relax
    }%
    \parindent\z@ \leftskip#3\relax \advance\leftskip\@tempdima\relax
    \rightskip\@pnumwidth plus4em \parfillskip-\@pnumwidth
    #5\leavevmode\hskip-\@tempdima
      \ifcase #1
       \or\or \hskip 1em \or \hskip 2em \else \hskip 3em \fi%
      #6\nobreak\relax
    \hfill\hbox to\@pnumwidth{\@tocpagenum{#7}}\par% <---- \dotfill -> \hfill
    \nobreak
    \endgroup
  \fi}
\title[Riemann--Roch for APOT]{Virtual Riemann--Roch Theorems for Almost Perfect Obstruction Theories}
\author{Michail Savvas}
\address{Department of Mathematics, The University of Texas at Austin\\ 2515 Speedway, Austin, TX 78712, USA}
\email{msavvas@utexas.edu}
\keywords{Riemann--Roch, $K$-theory, almost perfect obstruction theory}
\subjclass{14C40 (primary); 14C15, 14C17, 14C35, 14D23 (secondary)}
\begin{document}

\maketitle

\begin{abstract} 
This is the third in a series of works devoted to constructing virtual structure sheaves and $K$-theoretic invariants in moduli theory. The central objects of study are almost perfect obstruction theories, introduced by Y.-H. Kiem and the author as the appropriate notion in order to define invariants in $K$-theory for many moduli stacks of interest, including generalized $K$-theoretic Donaldson--Thomas invariants.

In this paper, we prove virtual Riemann--Roch theorems in the setting of almost perfect obstruction theory in both the non-equivariant and equivariant cases, including cosection localized versions. These generalize and remove technical assumptions from the virtual Riemann--Roch theorems of Fantechi--G\"{o}ttsche and Ravi--Sreedhar. The main technical ingredients are a treatment of the equivariant $K$-theory and equivariant Gysin map of sheaf stacks and a formula for the virtual Todd class.
\end{abstract}

\tableofcontents

\def\cG{{\mathcal{G}}}
\def\cF{{\mathcal{F}}}
\def\cK{{\mathcal{K}}}
\def\DM{Deligne--Mumford }
\def\sO{{\mathscr O}}
\def\Coh{\mathrm{Coh} }
\def\coh{\underline{\mathrm{Coh}} }
\def\QQ{\mathbb{Q} }
\def\PP{\mathbb{P} }
\def\AA{\mathbb{A} }

\section{Introduction}

The main approach in obtaining enumerative invariants in algebraic geometry consists of setting up a moduli stack parametrizing the objects of interest and then constructing a fundamental class on the stack, against which one can integrate cohomology classes that capture geometric constraints in order to obtain numbers. However, it is typical that, while these moduli stacks often have an explicit expected dimension, they are very singular, far from equidimensional and do not behave well under deforming the moduli problem.

To rectify these phenomena, Li--Tian \cite{LiTian} and Behrend--Fantechi \cite{BehFan} developed the theory of virtual fundamental cycles, which are a generalization of the usual fundamental class for a smooth scheme. They have been instrumental in defining and investigating several algebro-geometric enumerative invariants of great importance, such as Gromov--Witten \cite{BehGW}, Donaldson--Thomas \cite{Thomas} and Pandharipande--Thomas \cite{PT1} invariants, and are still one of the major components in modern enumerative geometry.

Their construction goes as follows: Any Deligne--Mumford stack $X$ is equipped with its intrinsic normal cone $\cC_X$ which locally for an \'{e}tale map $U \to X$ and a closed embedding $U \hookrightarrow M$ into a smooth scheme $M$ is the quotient stack $[C_{U/M} / T_M|_U]$ (cf. \cite{BehFan}). A perfect obstruction theory, defined in \cite{BehFan}, is a morphism $\phi \colon E \to \bL_X$ inducing an embedding of $\cC_X$ into the vector bundle stack $\eE_X = h^1 / h^0 ( E^\vee )$. Given this gadget, the virtual fundamental cycle is constructed as the intersection of the zero section $0_{\eE_X}$ with $\cC_X$
\begin{align*}
[X]\virt := 0_{\eE_X}^! [\cC_X] \in A_\ast(X).
\end{align*}

The existence of the virtual fundamental cycle is sufficient for constructing intersection theoretic and numerical enumerative invariants. Recently, there has been increased attention in such invariants that go beyond numbers or the intersection theory of cycles. Motivated by theoretical physics and geometric representation theory, invariants in $K$-theory are particularly desirable to have (see \cite{Okou2, Okou1}).

When the moduli stack $X$ admits a perfect obstruction theory with a global presentation $E = [E^{-1} \to E^0]$, where $E^{-1}, E^0$ are locally free sheaves on $X$, one can define the virtual structure sheaf, following \cite{yplee}, as
\begin{align*}
    [\sO_X\virt] := 0_{E_1}^! [\oO_{C_1}] \in K_0(X)
\end{align*}
where $E_1 = (E^{-1})^\vee$ and $C_1 = \cC_X \times_{\eE_X} E_1$.

However, there are many moduli stacks of interest which do not admit perfect obstruction theories or global presentations thereof, including moduli of simple complexes \cite{Inaba} and modified blowups of stacks of semistable sheaves and perfect complexes on Calabi--Yau threefolds \cite{KLS, Sav}.

In earlier work \cite{KiemSavvas, KiemSavvasLoc}, we developed a relaxed version of perfect obstruction theories, called almost perfect obstruction theories, which arise in the above moduli stacks, together with an extension of many of the usual tools, including virtual torus localization and cosection localization. An almost perfect obstruction theory admits an obstruction sheaf $\Ob_X$, which is the analogue of the sheaf $h^1(E^\vee)$, and induces an embedding of the coarse moduli sheaf $\fc_X$ of $\cC_X$ into $\Ob_X$ enabling us to define the virtual structure sheaf of $X$ as
\begin{align} \label{loc 1.1}
    [\sO_X\virt] := 0_{\Ob_X}^! [\oO_{\fc_X}] \in K_0(X).
\end{align}

To make sense of this expression, one of the main technical advances in \cite{KiemSavvas, KiemSavvasLoc} was the introduction of a $K$-theory of coherent sheaves on sheaf stacks and a Gysin map for sheaf stacks. 

At the same time, the embedding $\fc_X \to \Ob_X$ induces a cycle class $[\fc_X] \in A_\ast(\Ob_X)$, defined by Chang-Li in \cite{LiChang}, and hence, using the intersection theoretic Gysin map of Chang-Li for the sheaf stack $\Ob_X$, a virtual fundamental cycle
\begin{align} \label{loc 1.2}
    [X]\virt := 0_{\Ob_X}^! [\fc_X] \in A_*(X).
\end{align}

\medskip

It is a natural question whether the virtual structure sheaf~\eqref{loc 1.1} and the virtual fundamental cycle~\eqref{loc 1.2} induced by an almost perfect obstruction theory are related. Since these generalize the structure sheaf and fundamental cycle of a smooth scheme, it is natural to expect that such a relation uses the Riemann--Roch transformation, whenever this is defined.

In the presence of a perfect obstruction theory, virtual Riemann--Roch theorems providing the desired relation have been established for algebraic spaces by Fantechi--G\"{o}ttsche \cite{VirRR} and for quotient stacks by Ravi--Sreedhar \cite{EquivVirRR}. Algebraic spaces and, more generally, quotient stacks are two important classes of algebraic stacks with a well-defined Riemann--Roch transformation. 

In this paper, we prove a more general virtual Riemann--Roch theorem in the setting of almost perfect obstruction theory. Our main result can be summarized as follows.

\begin{thm*}
Let $G$ be an algebraic group, $X$ an algebraic space with a $G$-action, $Y$ a smooth scheme of pure dimension with a $G$-action and $f \colon X \to Y$ a $G$-equivariant morphism equipped with a $G$-equivariant almost perfect obstruction theory $\phi$ (cf. Definition~\ref{equivariant APOT}).

The virtual structure sheaf $[\sO_X\virt]$ and virtual fundamental cycle $[X]\virt$  are then naturally $G$-equivariant (cf. Definition~\ref{equivariant vir str sheaf and cycle def}) and satify the equivariant Riemann--Roch formula
\begin{align*}
    \tau_X^G ([\sO_X\virt]) = \td^G(T\virt_{X/Y}) \cap [X]\virt \in A_*^G(X).
\end{align*}

Here $\tau_X^G$ is the equivariant Riemann--Roch transformation of Edidin--Graham \cite{EdidinGrahamRR} and $\td^G(T\virt_{X/Y})$ is defined by the formula (cf. Theorem~\ref{vir todd class formula thm})
\begin{align*}
    \td^G(T_{X/Y}\virt) = \frac{\td^G(E_0)}{\td^G(E_1)} \cdot \frac{\td^G(f^\ast T_Y)}{\td^G(\fG)},
\end{align*}
whenever $X$ admits a $G$-invariant closed embedding into a smooth algebraic space with a $G$-action and $[E_0 \to E_1]$ is a $G$-equivariant two-term complex of locally free sheaves on $X$ which gives a global presentation of the virtual tangent bundle of $f$ (cf. Definition~\ref{equiv global prep of vir tangent}). $\fG$ denotes the Lie algebra of $G$, considered as a trivial  $G$-equivariant vector bundle on $X$ under the adjoint action of $G$.
\end{thm*}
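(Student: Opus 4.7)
\begin{sproof}
The strategy follows the general template of Fantechi-G\"{o}ttsche, adapted to handle both the equivariant setting and the weaker data of an APOT, for which one only has access globally to the obstruction sheaf stack $\Ob_X$ rather than a perfect obstruction theory with a global two-term resolution.

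The plan is to use the $G$-equivariant embedding of $X$ into a smooth ambient algebraic space, together with the given global presentation $[E_0 \to E_1]$ of $T\virt_{X/Y}$, to reduce the problem to a computation on the vector bundle $E_1$. From the APOT data one obtains a $G$-equivariant morphism relating $E_1$ to the obstruction sheaf stack $\Ob_X$, under which the coarse moduli sheaf $\fc_X$ corresponds to a $G$-invariant cycle $C_1 \subset E_1$. Combining this with the equivariant sheaf-stack Gysin map constructed earlier in the paper (extending \cite{KiemSavvas, KiemSavvasLoc} to the equivariant setting), both $[\sO_X\virt]$ and $[X]\virt$ can be expressed as
\begin{align*}
[\sO_X\virt] = 0^!_{E_1}[\oO_{C_1}], \qquad [X]\virt = 0^!_{E_1}[C_1],
\end{align*}
tying both invariants to a common cycle on $E_1$.

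Having reduced to the vector bundle setting, the next step is to apply the equivariant Verdier Riemann-Roch formula for the regular embedding $0_{E_1} \colon X \hookrightarrow E_1$, yielding
\begin{align*}
\tau^G_X \bigl( 0^!_{E_1}[\oO_{C_1}] \bigr) = \td^G(E_1)^{-1} \cdot 0^!_{E_1} \bigl( \tau^G_{E_1}([\oO_{C_1}]) \bigr).
\end{align*}
The right-hand side is then processed by identifying $C_1$, up to a deformation to the normal cone, with the pullback of the intrinsic normal cone along a vector-bundle surjection whose kernel involves $E_0$ and $f^\ast T_Y$. This produces the additional factors $\td^G(E_0)$ and $\td^G(f^\ast T_Y)$, while $\td^G(\fG)^{-1}$ enters from the Edidin-Graham formalism, which computes equivariant RR via the quotient stack $[X/G]$ whose tangent complex differs from that of $X$ by the adjoint-representation term $\fG$. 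Assembling these factors via the virtual Todd class formula of Theorem~\ref{vir todd class formula thm} then yields the desired identity.

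The main obstacle is establishing the compatibility between the equivariant Gysin map for the sheaf stack $\Ob_X$ and the equivariant Riemann-Roch transformation $\tau^G_X$. Because sheaf stacks do not admit smooth atlases over $X$, classical Riemann-Roch does not apply directly to $\Ob_X$; one must instead lift the argument through the local Koszul constructions defining the sheaf-stack Gysin map, and verify that these local lifts glue equivariantly and commute with $\tau^G$ in a way that correctly produces the $\td^G(\fG)^{-1}$ correction. Handling this compatibility constitutes the technical heart of the proof.
\end{sproof}
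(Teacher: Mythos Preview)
Your outline captures the Fantechi--G\"{o}ttsche core correctly: express $[\sO_X\virt]$ and $[X]\virt$ as $0_{E_1}^![\oO_{C_1}]$ and $0_{E_1}^![C_1]$ for a cone $C_1\subset E_1$, then apply the lci compatibility of the Riemann--Roch transformation with the Gysin map $0_{E_1}^!$. However, your plan diverges from the paper in two significant ways, and contains one genuine gap.

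\textbf{Structure of the equivariant argument.} The paper does \emph{not} work equivariantly throughout. It first proves the non-equivariant statement (Theorem~\ref{virtual todd thm}), and then reduces the equivariant theorem to it via the Borel construction: for each good pair $(V,U)$, the $G$-equivariant APOT $\phi$ induces an ordinary APOT $\phi_U$ on $f_U\colon X\times^G U\to Y\times^G U$, with $s_U([\sO_X\virt])=[\sO_{X\times^G U}\virt]$ and $s_U([X]\virt)=[X\times^G U]\virt$. One then applies the non-equivariant theorem on $X\times^G U$ and reads off the $i$-th component of $\tau_X^G$ from diagram~\eqref{fund diagram equivariant rr}. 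The factor $\td^G(\fG)^{-1}$ arises concretely from computing $T_{Y\times^G U}$ as the descent of $[\fG\to \pi_Y^\ast T_Y\oplus \pi_U^\ast T_U]$, not from an abstract tangent-complex comparison for $[X/G]$. This sidesteps entirely the need for an equivariant Verdier--Riemann--Roch formula or for any direct compatibility between the sheaf-stack Gysin map and $\tau^G$.

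\textbf{Misidentified obstacle.} Your stated ``main obstacle''---compatibility of the equivariant sheaf-stack Gysin map with $\tau^G$---is a non-issue in the paper's argument. Once $[\sO_X\virt]=0_{E_1}^![\oO_{C_1}]$ with $E_1$ an honest vector bundle, the sheaf-stack machinery has already done its work and one only needs the standard lci property of $\tau$ for $0_{E_1}$.

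\textbf{Genuine gap.} You assert that the APOT data yields a cone $C_1\subset E_1$ with $[\sO_X\virt]=0_{E_1}^![\oO_{C_1}]$, but a global presentation $[E_0\to E_1]$ in the sense of Definition~\ref{def of presentation of virtual tangent} is \emph{not} a perfect obstruction theory: there is no given morphism $E\to\bL_{X/Y}$, only $E^0\to\bL_{X/Y}$, and a priori no reason the fiber product $\fn_{X/Y}\times_{\Ob_X}E_1$ (via the APOT embedding $j_\phi$) agrees with $N_1=\Spec_{\oO_X}(\Sym\,\fF^{-1})$. The paper handles this via the auxiliary Lemma~\ref{auxiliary lemma}, which shows that \'etale-locally the presentation $E|_{X_\alpha}$ can be promoted to a genuine perfect obstruction theory $\psi_\alpha$ with $h^1(\psi_\alpha^\vee)=j_\phi|_{X_\alpha}$; this is what forces $M_1=N_1$ in diagram~\eqref{loc 4.7} and makes the computation of $\tau_{C_1}(\oO_{C_1})$ via \cite[Proposition~3.1]{VirRR} go through. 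Your sketch does not account for this step.
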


As a corollary, we obtain the following virtual Grothendieck--Riemann--Roch formula.

\begin{cor*}
When $f$ is proper, we have for any $V \in K_G^0(X)$
\begin{align*}
    \mathrm{ch}^G(f_\ast(V\otimes [\sO_X\virt])) \cdot \td^G(T_Y) \cap [Y] = f_\ast \left( \mathrm{ch}^G(V) \cdot \td^G(T\virt_{X/Y}) \cap [X]\virt \right).
\end{align*}
\end{cor*}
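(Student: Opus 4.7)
The plan is to derive the virtual Grothendieck--Riemann--Roch formula as a direct consequence of the main theorem by combining it with three standard properties of the Edidin--Graham equivariant Riemann--Roch transformation $\tau^G$: the $K_G^0$-module structure, covariance under proper pushforward, and the explicit form on smooth targets.

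First, I would apply the main theorem and exploit the $K_G^0(X)$-module structure of $\tau_X^G$, namely $\tau_X^G(V \otimes \alpha) = \mathrm{ch}^G(V) \cdot \tau_X^G(\alpha)$ for $\alpha \in K_0^G(X)$ and $V \in K_G^0(X)$. Taking $\alpha = [\sO_X\virt]$ and using the main theorem yields
\begin{align*}
\tau_X^G(V \otimes [\sO_X\virt]) = \mathrm{ch}^G(V) \cdot \td^G(T\virt_{X/Y}) \cap [X]\virt.
\end{align*}
Since $f$ is proper, I then push forward along $f$ and invoke the covariance $f_\ast \circ \tau_X^G = \tau_Y^G \circ f_\ast$ of the equivariant Riemann--Roch transformation to obtain
\begin{align*}
\tau_Y^G\bl f_\ast(V \otimes [\sO_X\virt]) \br = f_\ast\bl \mathrm{ch}^G(V) \cdot \td^G(T\virt_{X/Y}) \cap [X]\virt \br.
\end{align*}

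Next, because $Y$ is smooth of pure dimension, the equivariant Riemann--Roch transformation on $Y$ reduces to cap product, i.e.\ $\tau_Y^G(W) = \mathrm{ch}^G(W) \cdot \td^G(T_Y) \cap [Y]$ for any $W \in K_0^G(Y)$. Applying this with $W = f_\ast(V \otimes [\sO_X\virt])$ identifies the left-hand side of the above equality with $\mathrm{ch}^G\bl f_\ast(V \otimes [\sO_X\virt]) \br \cdot \td^G(T_Y) \cap [Y]$, which is exactly the claimed formula.

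The argument is essentially a formal manipulation once the main theorem is in hand; the real content of the virtual Riemann--Roch statement is carried by the main theorem itself, and no obstacle of comparable difficulty appears in the corollary. The only point that requires a small check is that the $K$-theoretic pushforward of $V \otimes [\sO_X\virt]$ along $f$ is well defined and agrees with the pushforward entering the Edidin--Graham equivariant Grothendieck--Riemann--Roch theorem; this follows because the virtual structure sheaf is constructed as an honest class in $K_0^G(X)$ (its equivariance being established in the proof of the main theorem) and $f$ is proper and $G$-equivariant.
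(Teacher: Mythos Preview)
Your proof is correct and takes essentially the same approach as the paper. The paper itself does not spell out the argument beyond noting that it is ``formal using the properties of $\tau_X$ identically as in \cite[Lemma~3.5]{VirRR}'', and what you have written is precisely that formal deduction from the module, covariance, and Todd properties of the equivariant Riemann--Roch transformation.
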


When the almost perfect obstruction theory $\phi$ admits a $G$-equivariant cosection $\Ob_X \to \oO_X$, we also obtain cosection localized versions of the above statements.
\medskip

Our virtual Riemann--Roch theorem is a generalization of \cite{VirRR, EquivVirRR} in multiple ways, as it only uses the existence of an almost perfect obstruction theory, and even in the case of a perfect obstruction theory requires a weaker notion of a global resolution of the two-term complex $E$ in its definition.

In fact, using the results of \cite{HekSav}, our theorem applies to the moduli stacks that are used to define intersection-theoretic and $K$-theoretic generalized Donaldson--Thomas invariants of sheaves on Calabi--Yau threefolds in \cite{KLS, KiemSavvas}, giving an explicit relationship between the two. This is a setting which is not covered by the Riemann--Roch theorems established in \cite{VirRR, EquivVirRR}.

Besides these, an essential technical input to the theorem of independent interest is the development of an equivariant version of the $K$-theory of coherent sheaves on sheaf stacks as well as an equivariant Gysin map. This is necessary in order to define a natural equivariant structure on the virtual structure sheaf $[\sO_X\virt]$.

When $G$ is the trivial group, the theorem specializes to a non-equivariant virtual Riemann--Roch theorem for an almost perfect obstruction theory on the morphism $f \colon X \to Y$.

A noteworthy feature of the theorem is the explicit formula it provides for $\td^G(T_{X/Y}\virt)$ even though the virtual tangent bundle $T_{X/Y}\virt$ of $f$ does not a priori make sense for an almost perfect obstruction theory. It would be interesting to investigate whether such a formula exists under more general conditions. This essentially boils down to representing the class $[T_{X/Y}] - [\Ob_X] \in K_0(X)$ by a class in $K^0(X)$. We formulate the most optimistic such scenario in Conjecture~\ref{conj 5.4}.

Finally, we mention that variants of the virtual Riemann--Roch theorem for \DM stacks with perfect obstruction theory have been studied in \cite{Joshua1, KhanRR, ToenDMRR}, often utilizing stronger input from derived algebraic geometry. We expect our results to be compatible with these and apply to any \DM stack with a Riemann--Roch transformation $\tau_X \colon K_0(X) \to A_*(X)$ satisfying the usual properties.

\subsection*{Layout of the paper} \S\ref{background section} collects necessary background on the resolution property for stacks and Riemann--Roch theorems. In \S\ref{equivariant K-theory and Gysin maps background section} we study the equivariant $K$-theory of coherent sheaves on sheaf stacks and their equivariant Gysin maps. \S\ref{APOT section} defines equivariant almost perfect obstruction theories and their induced virtual structure sheaves and fundamental cycles. In \S\ref{RR APOT Section} we combine these ingredients to give a formula for the virtual Todd class and establish virtual Riemann--Roch theorems for almost perfect obstruction theories in the non-equivariant and equivariant cases. Finally, \S\ref{cos loc section} generalizes these results in the presence of a cosection of the obstruction sheaf.

\subsection*{Acknowledgements} 
We are grateful to Young-Hoon Kiem for our prior collaboration which gave birth to the machinery of almost perfect obstruction theory and many helpful discussions on the results of this paper and throughout the years. We would also like to sincerely thank Andrea Ricolfi for his interest and reviewing an earlier draft of this work.

\subsection*{Notation and conventions} 
Everything in this paper is over the field $\bC$ of complex numbers. All stacks are of finite type and Deligne--Mumford stacks are assumed to be separated. 

$G$ typically denotes a linear algebraic group and $\fG$ its Lie algebra.

Unless otherwise stated, $K_0(X)$ denotes the Grothendieck group of coherent sheaves on a stack $X$ with $\bQ$-coefficients and $K^0(X)$ denotes the Grothendieck group of locally free sheaves on $X$, also tensored with $\bQ$. Similarly, $A_*(X)$ denotes the direct product of all Chow groups of a stack $X$ with $\bQ$-coefficients. When $X$ admits an action by a group $G$, $K_0^G(X), K_G^0(X)$ and $A_*^G(X)$ denote the corresponding $G$-equivariant groups.

If $E$ is a locally free sheaf on a Deligne--Mumford stack $X$, we will use the term “vector bundle” to refer to its total space. If $\fF$ is a coherent sheaf on a Deligne--Mumford stack $X$, we will use the same letter to refer to the associated sheaf stack.

For a morphism $f:T\to X$ of stacks and a coherent sheaf $\fF$ on $X$, its pullback $f^*\fF$ is sometimes denoted by $\fF|_T$ when the map $f$ is clear from context.
The bounded derived category of coherent sheaves on a stack $X$ is denoted by $D(X)$ and $\bL_{X/Y}=L^{\ge -1}_{X/Y}$ denotes the truncated cotangent complex for a morphism $X \to Y$.

\section{Background on Equivariant Geometry and Riemann--Roch } \label{background section}

Throughout this section, $G$ denotes a linear algebraic group. $X$ will be a Deligne--Mumford stack with an action of $G$, $Y$ is a smooth Artin stack of pure dimension with a $G$-action, and $f \colon X \to Y$ a $G$-equivariant morphism. For a detailed account of the foundations of group actions on stacks, we refer the reader to \cite{Romagny}.

\subsection{The resolution property} 

The following definition explains several variants of the resolution property for stacks and introduces some useful terminology.

\begin{defi} \label{resolution property def}
We say that an Artin stack $\mM$ satisfies the resolution property if every coherent sheaf $\fF$ on $\mM$ is the quotient of a locally free sheaf.

We say that $\mM$ satisfies the resolution property \'{e}tale locally if there exists an \'{e}tale cover $\rho \colon \uU \to \mM$, called an atlas for $\mM$, such that $\uU$ satisfies the resolution property.

Finally, for a given coherent sheaf $\fF$, we say that $\mM$ satisfies the resolution property \'{e}tale locally for $\fF$ if there exists an \'{e}tale cover $\rho_\fF \colon \uU \to \mM$ such that the pullback $\rho_{\fF}^\ast \fF$ is the quotient of a locally free sheaf on $\uU$. In this case, the cover $\uU \to \mM$ is called an $\fF$-atlas for $\mM$.
\end{defi}

When $\mM = [X/G]$ is a quotient stack, we can slightly specialize the notion of an atlas as follows.

\begin{defi} \label{equivariant atlas def}
We say that $X$ admits a $G$-equivariant atlas if there exists a $G$-equivariant \'{e}tale cover $U \to X$ such that the stack $[U/G]$ satisfies the resolution property. We then say that $[U/G] \to [X/G]$ is an effective atlas for the quotient stack $\mM = [X/G]$.

Given a $G$-equivariant coherent sheaf $\fF$ on $X$, we say that $X$ admits a $G$-equivariant atlas for $\fF$ if there exists a $G$-equivariant \'{e}tale cover $\rho_\fF \colon U \to X$ such that $\rho_\fF^\ast \fF$ is the quotient of a $G$-equivariant locally free sheaf on $U$. We say in that case that $[U/G] \to [X/G]$ is an effective $\fF$-atlas for $\mM$.
\end{defi}

The following theorem is a collection of results in the literature which give conditions for the resolution property to hold on the nose or \'{e}tale locally.

\begin{thm} \label{theorem 2.3}\text{ }
\begin{enumerate}
    \item \cite{ThomasonRes, Totaro} If $X$ is a scheme that admits an ample family of $G$-equivariant line bundles, i.e., if $X$ is the union of open affine subsets of the form $\lbrace s \neq 0 \rbrace$ where $s$ is a section of a $G$-equivariant line bundle on $X$, then $[X/G]$ satisfies the resolution property. In particular, this is true when $X$ is quasi-affine.
    \item \cite{Gross, Totaro} A quasi-compact quasi-separated Artin stack $\mM$ is of the form $[X / \GL_N]$, where $X$ is a quasi-affine scheme, if and only if $\mM$ has affine stabilizers and satisfies the resolution property.
    \item \cite[Theorem~4.4]{Alper} If $X$ is an algebraic space with a $G$-action and every point of $X$ with closed $G$-orbit has reductive stabilizer, then $X$ admits an affine (as a morphism) $G$-equivariant atlas $U \to X$ and hence $[U/G] \to [X/G]$ gives an affine effective atlas for $[X/G]$. In particular, this is true when $[X/G]$ is \DM.
    \item \cite[Theorem~4.21]{Alper} If $\mM$ is an Artin stack whose closed points have reductive stabilizers, then $\mM$ satisfies the resolution property \'{e}tale locally. In particular, any \DM stack satisfies the resolution property \'{e}tale locally.
\end{enumerate}
\end{thm}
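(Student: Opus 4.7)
Since Theorem~\ref{theorem 2.3} is a consolidation of results already established in the literature, my plan is to verify that each cited source yields the claimed conclusion in precisely the form stated here, with the hardest part being the bridge from each reference's formulation to the atlas-theoretic language of Definitions~\ref{resolution property def} and~\ref{equivariant atlas def}.

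For part~(1), I would unpack the definition of an ample family: given $G$-equivariant line bundles $\{L_\alpha\}$ on $X$ whose duals and tensor powers generate every $G$-equivariant coherent sheaf via global sections, for a coherent sheaf $\fF$ on $[X/G]$ (equivalently, a $G$-equivariant coherent sheaf on $X$) one obtains a surjection $\bigoplus_j L_{\alpha_j}^{-n_j} \twoheadrightarrow \fF$ by the standard equivariant global-generation argument of Thomason and Totaro. The quasi-affine case reduces to this since $\oO_X$ itself is ample with its canonical $G$-equivariant structure.

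For part~(2), the forward implication is quick: stabilizers of $[X/\GL_N]$ with $X$ quasi-affine are closed subgroup schemes of $\GL_N$, hence affine, and since $B\GL_N$ satisfies the resolution property (its coherent sheaves are $\GL_N$-representations, built from finite-dimensional summands), the affine morphism $[X/\GL_N] \to B\GL_N$ propagates the property by generating an arbitrary $\fF$ from pullbacks of $\GL_N$-representations twisted by a generating set of sections of $\oO_X$. The reverse direction is the real content of \cite{Totaro, Gross} and I would invoke it as a black box, as the construction of an embedding $\mM \hookrightarrow B\GL_N$ via a faithful representation built from the resolution property is already delicate.

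For parts~(3) and~(4), the plan is to apply Alper's local structure theorems \cite{Alper} directly. Part~(3) follows from Theorem~4.4 of \cite{Alper}, which in the reductive-stabilizer case produces $G$-equivariant affine \'etale covers $U \to X$; in the \DM case, stabilizers are finite and hence automatically reductive in characteristic zero. Part~(4) follows analogously from Theorem~4.21 of \cite{Alper}, which provides an \'etale cover realizing the resolution property locally on $\mM$. The main obstacle I anticipate is verifying that the covers produced by these two theorems satisfy the effective-atlas condition of Definition~\ref{equivariant atlas def}: this amounts to checking that $[U/G]$ enjoys the resolution property, which itself reduces to part~(1) since the affine $G$-scheme $U$ admits a trivially ample family of $G$-equivariant line bundles (for instance, those pulled back from a $G$-equivariant closed embedding into a linear representation).
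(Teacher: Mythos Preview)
The paper does not prove Theorem~\ref{theorem 2.3} at all: it is stated purely as a compilation of cited results, with no proof environment following it. Your sketch therefore goes well beyond what the paper does, and as a verification that the cited references deliver the claimed statements it is broadly sound.

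One imprecision worth flagging: in your treatment of part~(3) you write ``the affine $G$-scheme $U$'' and reduce to part~(1), but the theorem only asserts that $U \to X$ is affine \emph{as a morphism}, not that $U$ is an affine scheme. With $X$ merely an algebraic space this does not immediately yield an affine $U$. What Alper's local structure theorem actually produces are \'etale-local models of the form $[\Spec A / H]$ with $H$ linearly reductive, and it is this presentation (affine scheme modulo reductive group) that feeds into part~(1) to give the resolution property on the cover. Your concluding paragraph gestures at the right issue but the bridge you propose needs this adjustment.
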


\subsection{Riemann--Roch for algebraic spaces} For every scheme $X$, Fulton \cite[Theorem~8.3]{Fulton} proves that there exists a Riemann--Roch transformation 
\begin{align} \label{rr transform}
    \tau_X \colon K_0(X) \lr A_*(X),
\end{align}
which is a group homomorphism satisfying the following properties:
\begin{enumerate}
    \item module homomorphism: for any $V \in K^0(X)$ and any $F \in K_0(X)$ one has $\tau_X ( V \otimes F) = \mathrm{ch}(V ) \cap \tau_X(F)$;
    \item Todd: if $X$ is smooth, $\tau_X(\sO_X) = \td(T_X) \cap [X]$; hence for every $V \in K^0(X)$ one has $\tau_X (V \otimes \sO_X) = \mathrm{ch}(V ) \cdot \td(T_X) \cap [X]$;
    \item covariance: for every proper morphism $f : X \to Y$ one has $$f_\ast \circ \tau_X = \tau_{Y} \circ f_\ast \colon K_0(X) \to A_\ast(Y);$$
    \item local complete intersection: if $f : X \to Y$ is an lci morphism, and $\alpha \in K_0(Y)$, then $f^\ast (\tau_Y (\alpha)) = (\td T_f )^{-1} \cap \tau_X (f^\ast \alpha)$.
\end{enumerate}

Motivated by (2), we define the Todd class of $X$ by $\td (X) := \tau_X (\sO_X)$.

By \cite{Gillet}, Fulton's arguments extend to establish the existence of $\tau_X$ for any algebraic space $X$.

\subsection{Riemann--Roch for quotient stacks} \label{RR for quotient sec} Suppose now that $X$ is an algebraic space with an action of a linear algebraic group $G$ with $\dim G = g$. Edidin-Graham \cite{EdidinGrahamRR} then construct an equivariant analogue of the Riemann--Roch transformation~\eqref{rr transform}
\begin{align} \label{equivariant rr transform}
    \tau_X^G \colon K_0^G(X) \lr A_\ast^G (X)
\end{align}
from the $K$-theory group of $G$-equivariant coherent sheaves on $X$ to the $G$-equivariant Chow groups of $X$, defined in \cite{EdidinGrahamEquivIT}. This can be equivalently viewed as a Riemann--Roch transformation for the quotient stack $[X/G]$, since there are natural isomorphisms
$$K_0^G(X) \cong K_0([X/G]),\ A_\ast^G (X) \cong A_\ast([X/G]), $$
where $A_\ast([X/G])$ denotes the Chow groups defined by Kresch in \cite{KreschCycles}.

We review the construction of $\tau_X^G$ since it will be necessary in order to prove the main result of this paper later on.
\medskip

Firstly, we recall the definition of $A_i^G(X)$. 

Let $V$ be a complex $l$-dimensional representation of $G$ and let $U$ be a $G$-invariant open subset of $V$ such that $G$ acts freely on $U$. The diagonal action on $X \times U$ is also free, so there is a quotient in the category of algebraic spaces $X \times U \to X \times^G U$, which in general may not be
a scheme. The pair $(V, U)$ is called an $l$-dimensional good pair for an integer $j$ if $\mathrm{codim}(U, V ) > j$.

The $i$-th equivariant Chow group of $X$ is defined as 
$$A_i^G(X) := A_{i+l-g}(X \times^G U),$$ 
where $(V, U)$ is an $l$-dimensional good pair for the integer $n - i$ and $A_\ast$ denotes the usual Chow group. As usual, we write $A_\ast^G(X) = \prod_i A_i^G(X)$.
\medskip

To define $\tau_X^G$, we specify its $i$-th component $(\tau_X^G)_i$ mapping to $A_i^G(X)$. As above, let $(V, U)$ be an $l$-dimensional good pair for the integer $n - i$. Write $\jmath \colon U \to V$ for the open inclusion and denote by the same letter the open inclusion $X \times U \to X \times V$. Write $\pi_V \colon X \times V \to X$ for the projection. We also have a natural projection map $X \times^G (U \times V) \to X \times^G U$, which defines a vector bundle over $X \times^G U$. Then $(\tau_X^G)_i$ is defined by the commutative diagram
\begin{align} \label{fund diagram equivariant rr}
    \xymatrix{
    K_0^G(X \times V) \ar[r]^-{\jmath^\ast} & K_0^G(X \times U) \ar[r] & K_0(X \times^G U) \ar[d]^-{\left( \frac{\tau_{X \times^G U}}{\td(X \times^G (U \times V))} \right)_{i+l-g}} \\
    K_0^G(X) \ar[u]^-{\pi_X^\ast} \ar[ur]^-{\pi_X^\ast} \ar[rr]_-{(\tau_X^G)_i} \ar[urr]_-{s_U} & & A_{i+l-g}(X \times^G U),
    }
\end{align}
where the unlabelled top right morphism is the natural isomorphism, since the diagonal $G$-action on $X \times U$ is free. $s_U$ is determined by the commutativity of the diagram and equals the composition $K_0^G(X) \xrightarrow{\pi_X^\ast} K_0^G(X \times U) \to K_0(X \times^G U)$.

\subsection{Perfect obstruction theory, virtual structure sheaf and virtual fundamental cycle} We start with defining a perfect obstruction theory.

\begin{defi} \cite{BehFan} \label{pot def}
A perfect obstruction theory on a morphism $X \to Y$, where $X$ is a \DM stack and $Y$ a smooth, pure dimensional Artin stack, is a morphism
$$\phi \colon E \lra \bL_{X/Y}$$
in $D(X)$, where $E$ is a perfect complex of amplitude $[-1,0]$, satisfying that $h^{-1}(\phi)$ is surjective and $h^0(\phi)$ is an isomorphism.

We refer to the coherent sheaf $\Ob_X := h^1 (E^\vee)$ as the obstruction sheaf associated to the perfect obstruction theory $\phi$.
\end{defi}

A perfect obstruction theory $\phi$ induces a Cartesian diagram
\begin{align} \label{loc 3.1}
    \xymatrix{
    \cC_{X/Y} \ar[r] \ar[d] & \nN_{X/Y} := h^1 /h^0 (\bL_{X/Y}^\vee) \ar[rr]^-{h^1 /h^0 (\phi^\vee)} \ar[d] && \eE := h^1 / h^0 (E^\vee) \ar[d] \\
    \fc_{X/Y} \ar[r] & \fn_{X/Y} = h^1(\bL_{X/Y}^\vee) \ar[rr]_-{h^1(\phi^\vee)} && \Ob_X= h^1 (E^\vee)
    }
\end{align}
where $\cC_{X/Y}$ and $\nN_{X/Y}$ are the intrinsic normal cone and intrinsic normal sheaf of $X$ over $Y$ respectively, while $\fc_{X/Y}$ and $\fn_{X/Y}$ are their coarse moduli sheaves. As we will recall in Section~\ref{equivariant K-theory and Gysin maps background section}, $\fn_{X/Y}$ and $\Ob_X$ can be viewed as coherent sheaves on $X$ as well as sheaf stacks over $X$. These are not algebraic in general, except in special cases, such as when the sheaves are locally free. However, they do admit a notion of local charts and are equipped with natural Gysin maps. All the horizontal arrows in the diagram are closed embeddings.

Assuming that $E$ admits a global resolution by vector bundles on $X$ so that we may write $E = [E^{-1} \to E^0]$, we have that $\eE = [E_1 / E_0]$, where we denote $E_0 = (E^0)^\vee, E_1 = (E^{-1})^\vee$. In this case, the above diagram can be augmented by the Cartesian square
\begin{align}
    \xymatrix{
    C_1 \ar[d] \ar[r] & E_1 \ar[d] \\
    \cC_{X/Y} \ar[r] & \eE = [E_1 / E_0].
    }
\end{align}

We may then define the virtual fundamental cycle $[X]\virt \in A_*(X)$ and virtual structure sheaf $[\sO_X\virt] \in K_0(X)$ using the intersection-theoretic and $K$-theoretic Gysin map as follows.

\begin{defi} \cite{BehFan, LiTian}
The virtual fundamental cycle of $X$ is defined by $[X]\virt = 0_{E_1}^! [C_1] \in A_*(X)$.
\end{defi}

\begin{defi} \cite{yplee}
The virtual structure sheaf of $X$ is defined by $[\sO_X\virt] = 0_{E_1}^! [C_1] \in K_0(X)$.
\end{defi}

\subsection{Virtual Riemann--Roch for schemes with perfect obstruction theory} In \cite{VirRR}, Fantechi and G\"{o}ttsche prove a virtual Riemann--Roch theorem for quasiprojective schemes $X$ endowed with a perfect obstruction theory $\phi \colon E \to \bL_{X}$, under the assumption that $E$ admits a global resolution $[E^{-1} \to E^0]$. To state their theorem, we define the $K$-theoretic virtual tangent bundle of $X$ by the formula
$$T_{X}\virt = [E_0] - [E_1] \in K^0(X), $$
where as above $E_0 = (E^0)^\vee, E_1 = (E^{-1})^\vee$.

\begin{thm} \cite[Theorem~3.3, Lemma~3.5]{VirRR} \label{gf virtual rr}
We have
\begin{align}
    \tau_X ( [\sO_X\virt] ) = \td (T_{X}\virt) \cap [X]\virt.
\end{align}
As a corollary, if $f \colon X \to Y$ is a proper morphism from $X$ to a smooth, pure dimensional scheme $Y$ and $V \in K^0(X)$, we obtain the virtual Grothendieck--Riemann--Roch formula
\begin{align*}
    \mathrm{ch}(f_\ast(V\otimes [\sO_X\virt])) \cdot \td(T_Y) \cap [Y] = f_\ast \left( \mathrm{ch}(V) \cdot \td (T_{X}\virt) \cap [X]\virt \right).
\end{align*}
\end{thm}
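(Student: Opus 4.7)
The plan is to apply the lci-pullback property (property (4)) of the Riemann--Roch transformation $\tau$ in two steps---first at the zero section $s_0\colon X\hookrightarrow E_1$, then at the smooth $E_0$-torsor projection $\pi\colon C_1\to\cC_X$---combine with covariance (property (3)) and the projection formula, and finally identify one remaining Gysin expression with $[X]\virt$.

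\textbf{Step 1 (zero-section Gysin).} The K-theoretic definition gives $[\sO_X\virt] = s_0^*[\oO_{C_1}]$, where $s_0$ is a regular embedding with virtual relative tangent $T_{s_0}=-[E_1]\in K^0(X)$, so $\td(T_{s_0})=\td(E_1)^{-1}$. Property (4) yields
\[
\tau_X([\sO_X\virt]) \,=\, \td(E_1)^{-1}\cdot s_0^!\,\tau_{E_1}([\oO_{C_1}]).
\]

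\textbf{Step 2 (Todd class of the cone).} Let $\iota\colon C_1\hookrightarrow E_1$ and $\pi\colon C_1\to\cC_X$ denote the closed embedding and the structural $E_0$-torsor. Since $\pi$ is smooth with $T_\pi=\pi^*E_0$ and $\pi^*\oO_{\cC_X}=\oO_{C_1}$, covariance applied to $\iota$ combined with property (4) applied to $\pi$ gives
\[
\tau_{E_1}([\oO_{C_1}]) \,=\, \iota_*\!\bigl(\td(\pi^*E_0)\cap \pi^*\td(\cC_X)\bigr),
\]
where $\td(\cC_X):=\tau_{\cC_X}(\oO_{\cC_X})$.

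\textbf{Step 3 (extract the virtual Todd factor).} Writing $p\colon E_1\to X$ for the bundle projection, the identity $\td(\pi^*E_0)=\iota^*p^*\td(E_0)$, the projection formula for $\iota$, and the module property $s_0^*p^*=\mathrm{id}$ combine to give
\[
\tau_X([\sO_X\virt]) \,=\, \frac{\td(E_0)}{\td(E_1)}\cdot s_0^!\,\iota_*\pi^*\td(\cC_X) \,=\, \td(T_X\virt)\cdot s_0^!\,\iota_*\pi^*\td(\cC_X).
\]

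\textbf{Step 4 (identify the virtual cycle).} It remains to establish
\[
s_0^!\,\iota_*\pi^*\td(\cC_X) \,=\, [X]\virt.
\]
The top-dimensional component of $\td(\cC_X)$ is $[\cC_X]$, and its contribution matches by construction: $s_0^!\iota_*\pi^*[\cC_X] = s_0^![C_1]=[X]\virt$. The content of Step~4 is the vanishing of all strictly lower-dimensional Todd corrections of the (typically singular) cone $\cC_X$ after the composition $s_0^!\iota_*\pi^*$. One exploits the natural $\bG_m$-scaling action on the cone $\cC_X$, which extends to $C_1\subset E_1$ and fixes the zero section $X$ pointwise; compatibility of the zero-section Gysin $s_0^!$ with $\bG_m$-equivariant specialization to the fixed locus forces only the top-degree piece of $\pi^*\td(\cC_X)$ to survive.

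\textbf{Main obstacle.} Step~4 is the heart of the argument. Rigorously establishing that only $[\cC_X]$ contributes---so that the lower-dimensional Todd corrections of the singular cone are annihilated by $s_0^!\iota_*\pi^*$---requires delicate handling of the cone structure, most naturally via a $\bG_m$-equivariant localization or Verdier specialization argument at the vertex.

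\textbf{Corollary.} The virtual Grothendieck--Riemann--Roch formula follows immediately from the theorem: applying covariance (property (3)) of $\tau$ to the proper morphism $f\colon X\to Y$, using the module property (1) for $V\in K^0(X)$, and invoking the Todd identity (2) $\tau_Y(\oO_Y)=\td(T_Y)\cap[Y]$ on the smooth base $Y$, one rearranges to the stated equality in $A_*(Y)$.
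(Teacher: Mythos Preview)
Your Steps~1 and~3 are correct and indeed appear in the actual argument, but there are two genuine gaps. First, in Step~2 you invoke the Riemann--Roch transformation $\tau_{\cC_X}$ and its lci-pullback property along the smooth morphism $\pi\colon C_1\to\cC_X$; but $\cC_X$ is an Artin stack, and Fulton's $\tau$ with properties (1)--(4) is only set up for schemes (or algebraic spaces), so the expression $\td(\cC_X):=\tau_{\cC_X}(\oO_{\cC_X})$ is not available in this framework. Second, and more seriously, even granting a stacky extension of $\tau$, your Step~4 is unproven and the sketched $\bG_m$-argument does not work: the Gysin map $s_0^!\colon A_*(E_1)\to A_*(X)$ is an \emph{isomorphism} (inverse to flat pullback along $p$), and neither $\iota_*$ nor $\pi^*$ annihilates classes of any degree, so there is no mechanism by which ``only the top-degree piece survives''. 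What you would actually need is the identity $\td(\cC_X)=[\cC_X]$ in $A_*(\cC_X)$, and the scaling action alone does not deliver this.

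The paper states this theorem as a citation of Fantechi--G\"{o}ttsche rather than proving it, but its proof of the generalization Theorem~\ref{virtual todd thm} exhibits the correct route and shows how to replace your Steps~2--4. One uses a closed embedding $X\hookrightarrow M$ into a smooth space, so that $C_1$ is an affine bundle over the \emph{scheme} $C_{X/M}$; then \cite[Proposition~3.1]{VirRR}, proved via deformation to the normal cone, yields directly
\[
\tau_{C_1}(\oO_{C_1})=\pi_1^*\td(E_0)\cap[C_1],
\]
as in \eqref{loc 4.10}. This single identity on schemes bypasses $\cC_X$ entirely; your Step~1 together with the projection-formula manipulation then finishes the proof exactly as in \eqref{loc 4.12}--\eqref{loc 4.13}. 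Your derivation of the corollary is correct.
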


\subsection{Equivariant virtual Riemann--Roch} In \cite{EquivVirRR}, Ravi and Sreedhar prove an equivariant virtual Riemann--Roch theorem. A statement of their result is as follows.

\begin{thm}\cite[Theorem~1.1]{EquivVirRR} \label{equiv vir rr thm}
Let $X, Y$ be $G$-schemes and $f : X \to Y$ be a $G$-equivariant morphism such that $Y$ is smooth, $G$-equivariantly connected and pure dimensional. Suppose that there exists a $G$-invariant closed embedding $X \to M$ for some smooth $G$-scheme $M$ such that for any $l$-dimensional good pair for the integer $n-i$ the quotient $X \times^G U$ is a scheme. Then for any equivariant perfect relative obstruction theory $E \to \bL_{X/Y}$ on $X$ with respect to $Y$
which admits a global resolution $E = [E^{-1} \to E^0]$, we have
\begin{align}
    \tau_X ( [\sO_X\virt] ) = \td ^G(T_{X/Y}\virt) \cap [X]\virt \in A_*^G(X),
\end{align}
where $\td^G (T_{X/Y}\virt) = [E_0] - [E_1] + [f^\ast T_Y] -[\fG] \in K^0_G(X)$ and $E_0 = (E^0)^\vee, E_1 = (E^{-1})^\vee$ are $G$-equivariant locally free sheaves on $X$.
\end{thm}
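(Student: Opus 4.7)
The plan is to use the Edidin-Graham definition of $\tau_X^G$ (diagram~\eqref{fund diagram equivariant rr}) to reduce the equivariant statement to the non-equivariant virtual Riemann-Roch theorem of Fantechi-G\"ottsche (Theorem~\ref{gf virtual rr}) applied on the mixed quotient $X_U := X \times^G U$ for a suitable good pair $(V,U)$. The hypothesis that $X_U$ is a scheme is exactly what allows Theorem~\ref{gf virtual rr} to be invoked in this step.

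The first step is to verify that the $G$-equivariant relative perfect obstruction theory $\phi \colon E \to \bL_{X/Y}$ descends, via the free diagonal $G$-action on $X \times U$, to a relative perfect obstruction theory on $f_U \colon X_U \to Y_U := Y \times^G U$, and that the induced virtual structure sheaf $[\sO_{X_U}\virt]$ and virtual fundamental cycle $[X_U]\virt$ are compatible with $[\sO_X\virt]$ and $[X]\virt$ under the horizontal composition appearing in~\eqref{fund diagram equivariant rr}. Since $Y_U$ is smooth, combining this with the triangle
\begin{align*}
    f_U^\ast \bL_{Y_U} \to \bL_{X_U} \to \bL_{X_U/Y_U} \to
\end{align*}
yields an absolute perfect obstruction theory on $X_U$ with the same virtual structure sheaf/cycle and virtual tangent bundle
\begin{align*}
    T_{X_U}\virt = [E_0|_{X_U}] - [E_1|_{X_U}] + [f_U^\ast T_{Y_U}] \in K^0(X_U).
\end{align*}

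The second step is to apply Theorem~\ref{gf virtual rr} on $X_U$ to obtain $\tau_{X_U}([\sO_{X_U}\virt]) = \td(T_{X_U}\virt) \cap [X_U]\virt$, and to read off $(\tau_X^G)_i([\sO_X\virt])$ via~\eqref{fund diagram equivariant rr} by dividing by $\td(X \times^G (U \times V))$ and taking the $(i+l-g)$-degree component. To identify the outcome with $\td^G(T_{X/Y}\virt) \cap [X]\virt$, I would use the tangent sequence for the principal $G$-bundle $Y \times U \to Y_U$, pulled back to $X_U$, which in $K^0(X_U)$ reads
\begin{align*}
    [f_U^\ast T_{Y_U}] = [f^\ast T_Y]_{X_U} + [X \times^G (U \times V)] - [\fG]_{X_U},
\end{align*}
where $[\,\cdot\,]_{X_U}$ denotes descent of a $G$-equivariant bundle on $X$ to $X_U$. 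Dividing $\td(T_{X_U}\virt)$ by $\td(X \times^G (U \times V))$ then exactly cancels the good-pair contribution, leaving the product
\begin{align*}
    \td(E_0|_{X_U}) \cdot \td(E_1|_{X_U})^{-1} \cdot \td([f^\ast T_Y]_{X_U}) \cdot \td([\fG]_{X_U})^{-1},
\end{align*}
which is the Edidin-Graham representative of $\td^G(T_{X/Y}\virt)$ on $X$. Combining these identifications yields the theorem.

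The main technical obstacle I anticipate is the bookkeeping in the first step: checking that the $G$-equivariant obstruction theory descends functorially, and that the virtual structure sheaf and virtual cycle on $X_U$ genuinely coincide with the images of $[\sO_X\virt]$ and $[X]\virt$ under the horizontal composition in~\eqref{fund diagram equivariant rr} (equivalently, under the isomorphism $K_0^G(X \times U) \cong K_0(X_U)$ coming from the free $G$-action). Once this compatibility is in place, the remaining content of the argument is the Todd-class manipulation outlined above.
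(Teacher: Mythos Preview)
The paper does not give its own proof of this theorem: it is quoted as background from \cite{EquivVirRR} and no argument is supplied. So strictly speaking there is nothing in the paper to compare your proposal against for this particular statement.

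That said, the paper does prove a strict generalization, Theorem~\ref{vir todd class formula thm}, and its proof follows exactly the strategy you outline. Proposition~\ref{prop 4.11} carries out your first step (descent of the obstruction theory to $X \times^G U$ and compatibility of the virtual structure sheaf and cycle under $s_U$), equation~\eqref{loc 4.21} is your tangent-bundle identity for $T_{Y \times^G U}$, and the final combination~\eqref{loc 4.18}--\eqref{loc 4.21} is precisely your division by $\td(X \times^G (U \times V))$ and cancellation. The only difference is that the paper invokes its own non-equivariant Theorem~\ref{virtual todd thm} in place of Theorem~\ref{gf virtual rr}; in the setting of a genuine perfect obstruction theory with global resolution these coincide, so your substitution is harmless. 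One small point you glossed over: Fantechi--G\"ottsche's theorem as stated requires $X_U$ to be quasi-projective, not merely a scheme; you should either check this (it follows from the embedding $X_U \hookrightarrow M \times^G U$ once the latter is arranged to be quasi-projective) or, as the paper does, work with the slightly more flexible input of a closed embedding into a smooth algebraic space and appeal to \cite[Proposition~3.1]{VirRR} directly.
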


\section{Equivariant $K$-Theory and Gysin Maps on Sheaf Stacks} \label{equivariant K-theory and Gysin maps background section}

Throughout this section, $G$ denotes a linear algebraic group, $X$ is a Deligne--Mumford stack with an action of $G$, $Y$ is a smooth Artin stack of pure dimension with a $G$-action, and $f \colon X \to Y$ a $G$-equivariant morphism. $\fF$ will be a $G$-equivariant coherent sheaf on $X$.

In \cite{KiemSavvas}, the authors introduced the notion of coherent sheaves on the sheaf stack $\fF$, defined the $K$-theory of $\fF$ and constructed a Gysin map $0^!_\cF$ of $\fF$, which made it possible to construct the virtual structure sheaf $[\sO_X\virt]\in K_0(X)$ for a \DM stack $X$ equipped with an almost perfect obstruction theory. This was performed in the non-equivariant case where $G$ is the trivial group.

In this section, we recall the main constructions that will be necessary from \cite{KiemSavvas} and at the same time develop their generalization to the equivariant context.

\subsection{Sheaf stacks, local charts and common roofs} We may associate a sheaf stack to a coherent sheaf on $X$.  

\begin{defi} \emph{(Sheaf stack)} The \emph{sheaf stack} associated to $\fF$ is the stack that to every morphism $\rho \colon T \to X$ from a scheme $T$ associates the set of sections $\Gamma(T, \rho^\ast \fF)$. 
\end{defi}
 
By abuse of notation, we denote by $\fF$ the sheaf stack associated to a coherent sheaf $\fF$ on $X$.  

When $\fF$ is a $G$-equivariant sheaf, the sheaf stack $\fF$ naturally admits an induced $G$-action: Let $\sigma \colon X \times G \to X$ be the $G$-action morphism on $X$ and $\pi_X \colon X \times G \to X$ the first projection map. By definition, $\fF$ is a $G$-equivariant sheaf if there exists an isomorphism
\begin{align*}
    \mu \colon \sigma^* \fF \lr \pi_X^\ast \fF.
\end{align*}
To specify the $G$-action on the associated sheaf stack, we need a morphism
\begin{align*}
    \nu \colon \fF \times G \lr \fF
\end{align*}
satisfying the usual axioms of a group action.

Consider a $T$-point of $\fF \times G$. This consists of the data
\begin{align*}
    \rho \colon T \to X,\ s \in \Gamma(T, \rho^* \fF),\ g \colon T \to G.
\end{align*}
The image of this point under $\nu$ is then the $T$-point of $\fF$ consisting of the data
\begin{align*}
    \nu(\rho) \colon T \xrightarrow{\rho \times g} X \times G \xrightarrow{\sigma} X,\ \nu(s) \in \Gamma(T, \nu(\rho)^\ast \fF),
\end{align*}
where the section $\nu(s)$ is defined as the composition
\begin{align*}
    \nu(s) \colon \oO_T \xrightarrow{s} \rho^\ast \fF = (\rho \times g)^\ast \pi_1^\ast \fF \xrightarrow{(\rho \times g)^\ast \mu^{-1}} (\rho \times g)^\ast \sigma^\ast \fF = \nu(\rho)^\ast \fF.
\end{align*}
It is routine to check that $\nu$ defines a $G$-action and moreover the projection morphism $\fF \to X$ is $G$-equivariant.

A sheaf stack is not algebraic in general and we need an appropriate notion of local charts for geometric constructions. 

\begin{defi} \emph{(Local chart)} \label{local chart}
A \emph{local chart} $Q=(U, \rho, E, r_E)$ for the sheaf stack $\fF$ consists of
\begin{enumerate}
\item an \'etale morphism $\rho:U\to X$ from a scheme $U$, and 
\item a surjective homomorphism $r_E:E\to \rho^* \fF=\fF|_U$ of coherent sheaves on $U$ from a locally free sheaf $E$ on $U$.
\end{enumerate}
We will call $U$ the \emph{base} of the chart $Q$. 
If $U$ is affine and $E$ is free, then the local chart $Q = (U, \rho, E, r_E)$ is called \emph{affine}.

When $\fF$ is $G$-equivariant, we say that the chart $Q$ is $G$-equivariant if $U$ is a $G$-scheme, $\rho$ is $G$-equivariant, $E$ is a $G$-equivariant locally free sheaf and the homomorphism $r_E$ is $G$-equivariant.
\end{defi}

\begin{defi} \emph{(Morphism between local charts)}
Let $Q=(U, \rho, E, r_E)$ and $Q'=(U', \rho', E', r_{E'})$ be two local charts for $\fF$. A \emph{morphism} $\gamma \colon Q \to Q'$ is the pair $(\rho_\gamma, r_\gamma)$ of an \'{e}tale morphism $\rho_\gamma \colon U \to U'$ and a surjection $r_\gamma \colon E \to \rho_\gamma^* E'$ of locally free sheaves, such that the diagrams
\begin{align} \label{loc 2.1}
\xymatrix{
U \ar[r]^-{\rho_\gamma} \ar[dr]_-{\rho} & U' \ar[d]^-{\rho'} \\
& X
} \ \textrm{} \ \xymatrix{
E \ar[r]^-{r_\gamma} \ar[dr]_-{r_E} & \rho_\gamma^* E' \ar[d]^-{\rho_\gamma^* r_{E'}} \\
& \fF|_U
}
\end{align}
are commutative.

We say that $Q$ is a \emph{restriction} of $Q'$ and write $Q = Q'|_U$ if $E = \rho_\gamma^* E'$ and $r_\gamma$ is the identity morphism.

For a $G$-equivariant sheaf $\fF$ and equivariant local charts $Q, Q'$, the morphism $\gamma \colon Q \to Q'$ is called $G$-equivariant when $\rho_\gamma$ and $r_\gamma$ are equivariant.
\end{defi}

The notion of a common roof enables us compare two local charts on $\fF$ with the same base $\rho \colon U \to X$.

\begin{defi} \emph{(Common roof)} \label{common roof defi}
Let $r:\cG\to \fF$ and $r':\cG'\to \fF$ be two surjective homomorphisms of coherent sheaves on a scheme $U$. Their fiber product is defined by
\beq\label{1}\cG \times_{\fF} \cG':=\mathrm{ker}\left( \cG\oplus \cG'\xrightarrow{(r,-r')} \fF\oplus \fF\xrightarrow{+} \fF \right)\eeq 
and we have a commutative diagram
\[\xymatrix{
\cG\times_{\fF}\cG'\ar[r]\ar[d] & \cG\ar[d] \\
\cG'\ar[r] & \cF
}\]
of surjective homomorphisms, which is 
universal among such diagrams of surjective homomorphisms in the obvious sense. 

Given two charts $Q=(U, \rho, E, r_E)$ and $Q'=(U, \rho, E', r_{E'})$ with the same quasi-projective base $U$, we can pick a surjective homomorphism 
\begin{align} \label{loc 2.2}
     W\lra E\times_{\fF|_U}E'
\end{align} 
from a locally free sheaf $W$. Denoting the induced surjection $W\to  \fF|_U$ by $r_W$, we obtain a local chart $(U,\rho, W, r_W)$ with natural morphisms to $Q$ and $Q'$, which we call a \emph{common roof} of $Q$ and $Q'$.

When $\fF$ and $Q, Q'$ are $G$-equivariant, we obtain an equivariant common roof by requiring that $W$ and the morphism~\eqref{loc 2.2} are $G$-equivariant.
\end{defi}

More generally, given two charts $Q=(U,\rho, E, r_E)$ and $Q'=(U',\rho',E',r_{E'})$ of the sheaf stack $\cF$, we let $V=U\times_XU'$ and have two local charts $Q|_V$ and $Q'|_V$ with the same base. A common roof of $Q|_V$ and $Q'|_V$ is called a \emph{common roof} of $Q$ and $Q'$.

\subsection{Coherent sheaves on a sheaf stack $\fF$} 

A \emph{coherent sheaf} $\aA$ on $\fF$ is an assignment to every local chart $Q = (U, \rho, E, r_E)$ of a coherent sheaf $\aA_Q$ on the scheme $E$ (in the \'{e}tale topology) such that for every morphism $\gamma \colon Q \to Q'$ between local charts there exists an isomorphism 
\beq\label{y2} r_\gamma^* \left( \rho_\gamma^*\aA_{Q'} \right) \lr \aA_Q\eeq
which satisfies the usual compatibilities for composition of morphisms. 
Note that we abusively write $\rho_\gamma^* \aA_{Q'}$ for the pullback of $\aA_{Q'}$ to $\rho_\gamma^* E'$ via the morphism of bundles $\rho_\gamma^* E' \to E'$ induced by $\rho_\gamma$.
A \emph{quasicoherent sheaf} on a sheaf stack is defined likewise. 

A homomorphism $f:\aA\to \bB$ of (quasi)coherent sheaves on $\fF$ is the data of a homomorphism
$f_Q:\aA_Q\to \bB_Q$ of (quasi)coherent sheaves on $E$ for each local chart $Q=(U,\rho,E,r_E)$ such that for every morphism $\gamma:Q\to Q'$ of local charts,
the diagram 
$$\xymatrix{
r_\gamma^* \left( \rho_\gamma^*\aA_{Q'} \right) \ar[r]\ar[d]_{f_{Q'}}& \aA_Q\ar[d]^{f_Q}\\
r_\gamma^* \left( \rho_\gamma^*\bB_{Q'} \right) \ar[r] & \bB_Q
}$$
is commutative where the horizontal arrows are as in \eqref{y2}. We say that a homomorphism $f:\aA\to \bB$ is an isomorphism if $f_Q$ is an isomorphism for each local chart $Q$. 

We would now like to define $G$-equivariant sheaves on $\fF$ when $\fF$ and $X$ admit compatible $G$-actions. In order to do so, we need the following lemma, which says that our definition of a sheaf coincides with the usual definition of a sheaf on a stack.

\begin{lem} \label{sheaf on sheaf stack is usual sheaf}
A (quasi)coherent sheaf $\aA$ on a sheaf stack $\fF$ is equivalent to an assignment to every $T$-point $\tau \colon T \to \fF$ of a (quasi)coherent sheaf $\aA_\tau$ on $T$ (in the \'{e}tale topology) such that whenever we have a composition 
$$\tau \colon T \xrightarrow{\ f \ } T' \xrightarrow{\ {\tau'} \ } \fF$$
there exists an isomorphism
\begin{align} \label{loc 2.5}
    f^\ast \aA_{\tau'} \lr \aA_{\tau}.
\end{align}
These isomorphisms satisfy the usual compatibility conditions.
\end{lem}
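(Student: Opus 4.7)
The plan is to construct canonical functors in both directions between the two descriptions of a (quasi)coherent sheaf on $\fF$ and verify that they are mutually inverse. The starting observation is that every local chart $Q = (U, \rho, E, r_E)$ carries a canonical $E$-point $\sigma_Q : E \to \fF$, given on the total space $E$ by composing the tautological section of $\pi_E^\ast E$ (where $\pi_E : E \to U$ is the bundle projection) with $\pi_E^\ast r_E$ to obtain a section of $(\rho \circ \pi_E)^\ast \fF$. Conversely, a $T$-point $\tau : T \to \fF$ consists by definition of a morphism $\bar\tau : T \to X$ together with a section $s$ of $\bar\tau^\ast \fF$, and since $r_E$ is surjective, after passing to an \'etale cover of $T \times_X U$ the section $s$ lifts to a map $\tilde\tau : T' \to E$ that factors $\tau|_{T'}$ through $\sigma_Q$. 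By Theorem~\ref{theorem 2.3}(4) such local charts exist \'etale locally on $X$, so every $T$-point admits an \'etale-local factorization through some $\sigma_Q$.

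Given a chart-based sheaf $\aA$, I would then define $\aA_\tau$ by choosing an \'etale cover $\{T_\alpha \to T\}$ with liftings $\tilde\tau_\alpha : T_\alpha \to E_\alpha$ as above and setting $\aA_\tau|_{T_\alpha} := \tilde\tau_\alpha^\ast \aA_{Q_\alpha}$; in the reverse direction, given the data $\{\aA_\tau\}_\tau$, define a chart-based sheaf by $\aA_Q := \aA_{\sigma_Q}$. A morphism $\gamma : Q \to Q'$ of charts induces a morphism of the tautological $E$-points $\sigma_Q \to \sigma_{Q'}$, under which the compatibility~\eqref{loc 2.5} specializes to the required isomorphism~\eqref{y2}. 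Composing the two constructions in either order recovers the original data by unwinding definitions.

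The hard part is showing the forward construction is well-defined: two different lifts $\tilde\tau_\alpha, \tilde\tau_\alpha'$ through different charts $Q_\alpha, Q_\alpha'$ must produce canonically isomorphic pullbacks, and these isomorphisms must satisfy the cocycle condition on triple overlaps in order to yield genuine \'etale descent data. This is precisely where the common roof construction (Definition~\ref{common roof defi}) enters: two such lifts admit a common refinement through a chart $W$ equipped with morphisms $W \to E_\alpha$ and $W \to E_\alpha'$, and the isomorphisms~\eqref{y2} for these morphisms produce the required canonical identification. Triple compatibility then follows from the universal property~\eqref{1} of the fiber product of surjections, which ensures that any three charts admit a common roof sitting over each pairwise roof. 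Once this is verified, standard \'etale descent of quasicoherent sheaves yields $\aA_\tau$ on $T$, independent of all choices, and the compatibility isomorphisms~\eqref{loc 2.5} for compositions $T \xrightarrow{f} T' \xrightarrow{\tau'} \fF$ are inherited from the functoriality of pullback along $f$.
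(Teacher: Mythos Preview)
Your proposal is correct and follows essentially the same strategy as the paper: define the tautological $E$-point $\sigma_Q$ of a chart (the paper calls it $\tau_Q$), set $\aA_Q := \aA_{\sigma_Q}$ in one direction, and in the other direction lift a given $T$-point through local charts and glue by \'etale descent, using common roofs to compare different choices.

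The one point where the paper does noticeably more than your sketch is the independence of the pairwise identification from the choice of lift. You say the common roof $W$ ``produces the required canonical identification'', but a lift of the pair $(\tilde\tau_\alpha,\tilde\tau_\alpha')$ to $W$ is itself only determined up to the kernel of $W \to E_\alpha \times_{\fF|_U} E_\alpha'$, so one must still check that two such lifts to $W$ give the \emph{same} isomorphism $\tilde\tau_\alpha^\ast \aA_{Q_\alpha} \cong (\tilde\tau_\alpha')^\ast \aA_{Q_\alpha'}$. The paper handles this cleanly in the special case $Q_\alpha = Q_\alpha'$ by a small trick: it builds a single chart $\tilde Q_\alpha = (X_\alpha,\rho_\alpha,E_\alpha\oplus F_\alpha,\tilde r_\alpha)$ (with $F_\alpha$ surjecting onto $\ker r_\alpha$) equipped with \emph{two different} chart morphisms to $Q_\alpha$, namely $\pi_1$ and addition, and a \emph{single} lift $\tilde s_\alpha \oplus \tilde k_\alpha$ upstairs; the two isomorphisms~\eqref{y2} for $\pi_1$ and $+$ then compose to the desired identification, manifestly independent of further choices. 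This is the substantive step your outline elides, and once it is in place the general case and the cocycle condition follow from common roofs as you indicate.
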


\begin{proof}
This is a descent statement. Clearly if we have such an assignment $\aA_\tau$ for every $T$-point $\tau \colon T \to \fF$, then for a local chart $Q = (U, \rho, E, r_E)$ we get an $E$-point of $\fF$ given by the composition
$$\tau_Q \colon E \xrightarrow{\ r_E \ } \fF|_U = \rho^\ast \fF \lr \fF,$$
where the last morphism is the natural open inclusion. 

We can thus set $\aA_Q := \aA_{\tau_Q}$. For a morphism $\gamma \colon Q \to Q'$ of local charts, we obtain an induced composition by \eqref{loc 2.1}
\begin{align} \label{loc 2.6}
    \tau_Q \colon E \xrightarrow{\ r_\gamma \ } E' \xrightarrow{\tau_{Q'}} \fF
\end{align}
where by slight abuse of notation $r_\gamma$ is the map of vector bundles fitting in the commutative diagram
\begin{align*}
    \xymatrix{
    E \ar[d] \ar[r]^-{r_\gamma} & E' \ar[d] \\
    U \ar[r]_-{\rho_\gamma} & U'.
    }
\end{align*}
The isomorphism~\eqref{loc 2.5} corresponding to the composition~\eqref{loc 2.6} can be written equivalently as
$$r_\gamma^\ast \left( \rho_\gamma^\ast \aA_{Q'} \right) \lr \aA_Q.$$
Hence we obtain a (quasi)coherent sheaf on $\fF$.

Conversely, suppose we are given a (quasi)coherent sheaf $\aA$ on $\fF$ and a $T$-point $\tau \colon T \to \fF$ which consists of the data of a morphism $\rho \colon T \to X$ and a section 
$$s \colon \oO_T \lr \rho^\ast \fF.$$

We need to specify the assignment of a sheaf $\aA_\tau$ on $T$. By the pullback formalism developed in \cite[Section 3]{KiemSavvasLoc}, there exists a functorial pullback sheaf $\rho^* \aA$ on the sheaf stack $\rho^* \fF$ over $T$. By replacing $X$ and $\fF$ with $T$ and $\rho^* \fF$ we may thus assume that $T=X$ and $\rho = \id_X$.

Since $\fF$ is a coherent sheaf on $X$, there is an affine \'{e}tale cover $\lbrace \rho\lalp \colon X\lalp \to X \rbrace$ and local charts $Q\lalp = (X\lalp, \rho\lalp, E\lalp, r\lalp)$ for $\fF$. By possibly shrinking $X\lalp$, we may assume that $E\lalp$ is free and thus the section $s\lalp := s|_{X\lalp}$ of $\fF|_{X\lalp}$ lifts to a section of $E\lalp$
\begin{align}
    \xymatrix{
     & E\lalp \ar[d]^-{r\lalp} \\
    \oO_{X\lalp} \ar[r]_-{s|_{X\lalp}} \ar[ur]^-{\tilde{s}\lalp} & \fF|_{X\lalp}.
    }
\end{align}

In particular, we obtain a morphism of schemes $\tilde{s}\lalp \colon X\lalp \to E\lalp$ and can define $\aA\lalp := (\tilde{s}\lalp)^\ast \aA_{Q\lalp}$. Observe that $\aA\lalp$ does not depend on the choice of lift: For any two choices $\tilde{s}\lalp, \tilde{s}\lalp'$, their difference $\tilde{s}\lalp'- \tilde{s}\lalp$ factors through a section $k\lalp$ of $\ker( r\lalp )$. We thus have commutative diagrams 
\begin{align}
    \xymatrix{
     & E\lalp \oplus \ker( r\lalp ) \ar[d]^-{\pi_1} \\
     & E\lalp \ar[d]^-{r\lalp} \\
    \oO_{X\lalp} \ar[r]_-{s|_{X\lalp}} \ar[ur]_-{\tilde{s}\lalp} \ar[ruu]^-{\tilde{s}\lalp \oplus k\lalp} & \fF|_{X\lalp},
    } \xymatrix{
     & E\lalp \oplus \ker( r\lalp ) \ar[d]^-{+} \\
     & E\lalp \ar[d]^-{r\lalp} \\
    \oO_{X\lalp} \ar[r]_-{s|_{X\lalp}} \ar[ur]_-{\tilde{s}\lalp'} \ar[ruu]^-{\tilde{s}\lalp \oplus k\lalp} & \fF|_{X\lalp}.
    }
\end{align}

Since $\tilde{r}\lalp := r\lalp \circ \pi_1 = r\lalp \circ +$, by taking a surjection $F\lalp \to \ker(r\lalp)$ with $F\lalp$ free, we get a local chart $\tilde{Q}\lalp = (X\lalp, \rho\lalp, E\lalp \oplus F\lalp, \tilde{r}\lalp)$ and two morphisms $\gamma, \gamma' \colon \tilde{Q}\lalp \to Q\lalp$ determined by the morphisms $E\lalp \oplus F\lalp \xrightarrow{\pi_1} E\lalp$ and $E\lalp \oplus F\lalp \xrightarrow{+} E\lalp$.

Lifting $k\lalp$ to any section $\tilde{k}\lalp$ of $F\lalp$, we get new commutative diagrams
\begin{align}
    \xymatrix{
     & E\lalp \oplus F\lalp \ar[d]^-{\pi_1} \\
     & E\lalp \ar[d]^-{r\lalp} \\
    \oO_{X\lalp} \ar[r]_-{s|_{X\lalp}} \ar[ur]_-{\tilde{s}\lalp} \ar[ruu]^-{\tilde{s}\lalp \oplus \tilde{k}\lalp} & \fF|_{X\lalp},
    } \xymatrix{
     & E\lalp \oplus F\lalp \ar[d]^-{+} \\
     & E\lalp \ar[d]^-{r\lalp} \\
    \oO_{X\lalp} \ar[r]_-{s|_{X\lalp}} \ar[ur]_-{\tilde{s}\lalp'} \ar[ruu]^-{\tilde{s}\lalp \oplus \tilde{k}\lalp} & \fF|_{X\lalp}.
    }
\end{align}

The isomorphisms~\eqref{y2} now immediately imply that there is a natural isomorphism $\pi_1^\ast \aA_{Q\lalp} \simeq +^\ast \aA_{Q\lalp}$ of sheaves on the vector bundle $E\lalp \oplus F\lalp$. Pulling back via $\tilde{s}\lalp \oplus \tilde{k}\lalp$, we see that $\aA\lalp$ is defined up to canonical isomorphism.

Using the same reasoning and common roofs for overlaps, a standard descent argument implies that the sheaves $\aA\lalp$ glue canonically to define a sheaf on $X$, which by definition is the sheaf $\aA_\tau$. The fact that $\aA_\tau$ satisfies the functoriality condition~\eqref{loc 2.5} is routine. We leave the details to the reader.
\end{proof}

We are now in position to define $G$-equivariant sheaves on sheaf stacks.

\begin{defi}
Let $X$ be a \DM stack with a $G$-action and $\fF$ a $G$-equivariant coherent sheaf on $X$, with $G$-action determined by the morphism $\nu \colon \fF \times G \to \fF$. 

We say that a (quasi)coherent sheaf $\aA$ on $\fF$ is $G$-equivariant if there exists an isomorphism
$$\nu^* \aA \lr \pi_\fF^\ast \aA$$
satisfying the axioms of a group action, where $\pi_\fF \colon \fF \times G \to \fF$ denotes the projection. Morphisms between $G$-equivariant sheaves are defined in the obvious way.
\end{defi}

By Lemma~\ref{sheaf on sheaf stack is usual sheaf} the meaning of pullback in the definition is unambiguous. Observe that, as expected, if $\aA$ is a $G$-equivariant sheaf on $\fF$ and $Q = (U, \rho, E, r_E)$ is a $G$-equivariant local chart, then $\aA_Q$ is a $G$-equivariant sheaf on the $G$-scheme $E$.

Exact sequences and the $K$-group $K_0(\fF)$ were defined in \cite{KiemSavvas} as follows.
\begin{defi} \emph{(Short exact sequence)} \label{ses}
Let $\aA, \bB, \cC$ be coherent sheaves on the sheaf stack $\fF$. A sequence 
\begin{align*}
    0 \lr \aA \lr \bB \lr \cC \lr 0
\end{align*}
of homomorphisms of coherent sheaves on $\fF$  
is \emph{exact} if for every local chart $Q=(U, \rho, E, r_E)$ on $\fF$ the sequence
\begin{align*}
    0 \lr \aA_Q \lr \bB_Q \lr \cC_Q \lr 0
\end{align*}
is an exact sequence of coherent sheaves on the scheme $E$.
\end{defi} 
Note that the morphism
$$E\mapright{r_\gamma} \rho_\gamma^*E'=U\times_{U'}E'\lra E'$$
is smooth and hence flat.  
We can likewise define the \emph{kernel} and \emph{cokernel} of a homomorphism $f:\aA\to \bB$ of coherent sheaves on the sheaf stack $\fF$. Thus coherent and quasicoherent sheaves on $\fF$  form abelian categories $$\mathrm{Coh}(\fF) \subset \mathrm{QCoh}(\fF).$$

In the $G$-equivariant case, we analogously obtain abelian categories 
$$\mathrm{Coh}^G(\fF) \subset \mathrm{QCoh}^G(\fF),$$
with objects $G$-equivariant sheaves and $G$-equivariant morphisms between them.

\begin{defi} 
The \emph{$K$-group} of coherent sheaves on $\fF$ is the group $K_0(\fF)$ generated by the isomorphism classes $[\aA]$ of coherent sheaves $\aA$ on $\fF$, with relations generated by $[\bB] = [\aA] + [\cC]$ for every short exact sequence $$0 \lr \aA \lr \bB \lr \cC \lr 0.$$

When $\fF$ admits a $G$-action, we have the corresponding notion of the \emph{$G$-equivariant $K$-group} $K_0^G(\fF)$.
\end{defi}
In other words, $K_0(\cF)$ and $K_0^G(\cF)$ are the Grothendieck groups of the abelian categories $\mathrm{Coh}(\cF)$ and $\mathrm{Coh}^G(\cF)$ respectively. 

If $\fF=E$ is locally free, so that $\fF$ is an algebraic stack, then the above definitions recover the standard notions of short exact sequences and $K_0(E)$ and $K_0^G(E)$ for the vector bundle $E$.

\subsection{Equivariant Gysin maps for sheaf stacks in $K$-theory} \label{Koszul homology section} 
Let $Q = (U, \rho, E, r_E)$ be a local chart for the sheaf stack $\fF$ and denote the vector bundle projection map $E \to U$ by $\pi_E$. The tautological section of the pullback $\pi_E^* E$ induces an associated Koszul complex $$\cK(E):=\wedge^\bullet \pi_E^* E^\vee$$ that resolves the structure sheaf $\oO_U$ of the zero section of $\pi_E$.

\begin{defi}
For any local chart $Q = (U, \rho, E, r_E)$ and a coherent sheaf $\aA$ on $\fF$, the \emph{$i$-th Koszul homology sheaf} $\hH_Q^i(\aA)$ of $\aA$ with respect to $Q$ is defined as the homology of the complex 
$$\cK(E)\otimes_{\sO_E} \aA_Q=\wedge^\bullet \pi_E^* E^\vee \otimes_{\sO_E} \aA_Q$$ 
in degree $-i$. Since the complex $\cK(E) \otimes_{\sO_E} \aA_Q$ is quasi-isomorphic to the derived tensor product $\sO_U \otimes^L_{\sO_E} \aA_Q$, $\hH_Q^i(\aA)$ is a coherent sheaf supported on the zero section of $E$ and hence we may consider it naturally as a coherent sheaf on $U$, so that $\hH_Q^i(\aA) \in \Coh(U)$.
\end{defi}

Using common roofs and the standard descent theory for coherent sheaves on algebraic stacks, the following is proven in \cite{KiemSavvas}.

\begin{thm-defi}
Let $\aA$ be a coherent sheaf on a sheaf stack $\fF$ over a Deligne--Mumford stack $X$. The coherent sheaves $\hH_Q^i(\aA)\in \mathrm{Coh}(U)$ glue canonically to a coherent sheaf $\hH_\cK^i(\aA)\in \mathrm{Coh}(X)$ on $X$, which is defined to be the $i$-th Koszul homology sheaf of $\aA$.
\end{thm-defi} 

The Koszul homology sheaves of $\aA$ can then be used to define a $K$-theoretic Gysin map $0_\fF^! \colon K_0(\fF) \to K_0(X)$.

\begin{defi}\label{32} \emph{($K$-theoretic Gysin map)}
The $K$-theoretic Gysin map  is defined by the formula
\begin{align}\label{31}
 0_\fF^! \colon K_0(\fF) \to K_0(X),\quad   0_{\fF}^! [\aA] = \sum_{i \geq 0} (-1)^i [ \hH_\cK^i(\aA) ] \in K_0(X)
\end{align}
where $\aA$ is a coherent sheaf on $\fF$.
\end{defi}

For the purposes of this paper, we need equivariant generalizations of these constructions. These are provided by the following theorem when $X$ is an algebraic space.

\begin{thm} \label{equivariant gysin map k-theory alg spc}
Suppose that $\fF$ is a $G$-equivariant sheaf over an algebraic space $X$ with a $G$-action. For any $G$-equivariant coherent sheaf $\aA$ on the sheaf stack $\fF$, the Koszul homology sheaves $\hH_\kK^i(\aA)$ admit natural equivariant lifts in $\Coh^G(X)$ and there exists a $G$-equivariant $K$-theoretic Gysin map
\begin{align} \label{equivariant gysin map k-theory}
    0_\fF^{!,G} \colon K_0^G(\fF) \to K_0^G(X)
\end{align}
defined by the formula~\eqref{31}.
\end{thm}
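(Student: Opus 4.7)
The plan is to equivariantize the construction from \cite{KiemSavvas} step by step, leveraging the availability of $G$-equivariant atlases for algebraic spaces provided by Theorem~\ref{theorem 2.3}(3). Since $X$ is an algebraic space with a $G$-action, any $G$-equivariant coherent sheaf $\fF$ on $X$ admits an effective $G$-equivariant $\fF$-atlas: pulling back along the affine $G$-equivariant atlas $U\to X$ produced by Theorem~\ref{theorem 2.3}(3), the resolution property for $[U/G]$ provides a $G$-equivariant surjection $E\to \rho^*\fF$ from a $G$-equivariant locally free sheaf. This guarantees that there are enough $G$-equivariant local charts $Q=(U,\rho,E,r_E)$ covering $\fF$, and moreover that any two such charts admit a $G$-equivariant common roof (by applying the resolution property again to the $G$-equivariant fiber product $E\times_{\fF|_U}E'$).

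Next, for any $G$-equivariant local chart $Q=(U,\rho,E,r_E)$, the total space $E$ is a $G$-scheme, the bundle projection $\pi_E$ is $G$-equivariant, and the tautological section of $\pi_E^*E$ is $G$-equivariant. Hence the Koszul complex $\kK(E)=\wedge^\bullet \pi_E^*E^\vee$ is naturally a complex of $G$-equivariant locally free sheaves on $E$. Since $\aA_Q\in \Coh^G(E)$ by construction, the tensor product $\kK(E)\otimes_{\oO_E}\aA_Q$ is a complex in $\Coh^G(E)$, and its cohomology sheaves $\hH_Q^i(\aA)$ are naturally $G$-equivariant coherent sheaves on $U$ (after pushing forward along the closed inclusion of the zero section).

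I would then show that the non-equivariant gluing in \cite{KiemSavvas} of the local sheaves $\hH_Q^i(\aA)$ to a coherent sheaf $\hH_\kK^i(\aA)$ on $X$ lifts canonically to a $G$-equivariant structure. The point is that the comparison isomorphisms associated to morphisms of charts and common roofs are uniquely determined by the chart data (indeed, this uniqueness is what makes descent work in the non-equivariant case). Consequently, when all charts, morphisms and common roofs are taken $G$-equivariantly, the induced gluing data are automatically $G$-equivariant, and the $G$-action on the local sheaves $\hH_Q^i(\aA)$ descends to a canonical $G$-equivariant structure on $\hH_\kK^i(\aA)$, producing the desired lift in $\Coh^G(X)$.

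Finally, for the Gysin map, exactness of a sequence in $\Coh^G(\fF)$ is checked on equivariant local charts (Definition~\ref{ses} applied equivariantly), and tensoring a short exact sequence $0\to \aA\to \bB\to \cC\to 0$ with the locally free complex $\kK(E)$ yields a long exact sequence of $G$-equivariant Koszul homology sheaves. Hence the alternating sum $\sum_{i\geq 0}(-1)^i[\hH_\kK^i(\aA)]$ respects equivariant short exact sequences, defining the equivariant Gysin map $0_\fF^{!,G}\colon K_0^G(\fF)\to K_0^G(X)$ by formula~\eqref{31}. The main technical obstacle is verifying the compatibility of the canonical gluing isomorphisms with the $G$-action on equivariant common roofs; this reduces to the observation that the descent isomorphisms in the non-equivariant construction are built out of intrinsic operations on equivariant sheaves (pullbacks along $G$-equivariant smooth maps, kernels of $G$-equivariant morphisms), so the uniqueness of the gluing automatically forces $G$-equivariance.
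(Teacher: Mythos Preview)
Your overall strategy---construct the Koszul homology equivariantly on a $G$-equivariant chart and descend---matches the paper, but there is a genuine gap in how you produce the chart. You invoke Theorem~\ref{theorem 2.3}(3) directly on $X$, but that result requires every closed $G$-orbit to have reductive stabilizer (equivalently, that $[X/G]$ be Deligne--Mumford). No such hypothesis is present in the theorem: $G$ is an arbitrary linear algebraic group and the action on $X$ is arbitrary, so a $G$-equivariant \'etale atlas need not exist (e.g.\ $G$ unipotent acting trivially). Without it, your supply of $G$-equivariant local charts $(U,\rho,E,r_E)$ with $\rho$ \'etale collapses, and the rest of the argument has nothing to stand on.

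The paper's fix is the key missing idea: choose a quasi-affine scheme $U$ with a \emph{free} $G$-action and pass to $X\times U$ with the diagonal action. Now all stabilizers are trivial, Theorem~\ref{theorem 2.3}(3) applies to $X\times U$, and one obtains an affine $G$-equivariant \'etale atlas $V\to X\times U$. Composing with the projection gives a \emph{smooth} (not \'etale) $G$-equivariant surjection $\rho\colon V\to X$; since $V$ and $V\times_X V$ are affine $G$-schemes, the resolution property furnishes equivariant surjections from locally free sheaves and equivariant common roofs. One then runs your argument using smooth descent rather than \'etale descent. So your outline becomes correct once you insert this free-action trick and replace ``\'etale'' by ``smooth'' throughout the descent step.
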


\begin{proof}
Let $\aA$ be a $G$-equivariant sheaf on the sheaf stack $\fF$. Let $U$ be a quasi-affine scheme on which $G$ acts freely and write $\pi_X \colon X \times U \to X$ for the projection map. The diagonal action of $G$ on $X \times U$ is free and thus by Theorem~\ref{theorem 2.3}(3), $X \times U$ admits an affine $G$-equivariant atlas $V \to X \times U$. In particular, we obtain a smooth affine $G$-equivariant surjection $\rho \colon V \to X$.

Consider the commutative diagrams with Cartesian rhombi
\begin{align}
    \xymatrix{
    & \rho^\ast \fF \times_{\fF} \rho^\ast \fF \ar[dl]_{\pi_1} \ar[dr]^{\pi_2} \ar[dd] & \\
    \rho^\ast \fF \ar[dd] & & \rho^\ast \fF \ar[dd] \\
    & V \times_X V \ar[dl]_{\pi_1} \ar[dr]^{\pi_2} & \\
    V \ar[dr]_-{\rho} & & V \ar[dl]^-{\rho} \\
    & X &
    }
    \xymatrix{
    & \rho^\ast \fF \times_{\fF} \rho^\ast \fF \ar[dl]_{\pi_1} \ar[dr]^{\pi_2} & \\
    \rho^\ast \fF \ar[dd] \ar[dr]^-{\rho} & & \rho^\ast \fF \ar[dd] \ar[dl]_-{\rho}\\
    & \fF \ar[dd] & \\
    V \ar[dr]_-{\rho} & & V \ar[dl]^-{\rho} \\
    & X &
    }
\end{align}

By smooth descent, we may equivalently think of $\aA$ as a $G$-equivariant sheaf $\rho^\ast \aA$ on the sheaf stack $\rho^\ast \fF$ together with a $G$-equivariant descent datum $\pi_1^\ast \rho^\ast \aA \simeq \pi_2^\ast \rho^\ast \aA$, an isomorphism between the pullbacks of $\rho^\ast \aA$ via the two projections $\pi_1, \pi_2$ to the sheaf stack $\rho^\ast \fF \times_{\fF} \rho^\ast \fF$.

Since $V$ satisfies the resolution property, there exists a $G$-equivariant locally free sheaf $E$ on $V$ and a surjection $r \colon E \to \rho^\ast \fF$, so that we have a surjective local chart $Q_V = (V, \id_V, E, r)$ for $\rho^\ast \fF$ on $V$. We then obtain $G$-equivariant Koszul homology sheaves $\hH_\kK^i(\rho^\ast \aA)$ on $V$ defined as the homology sheaves of the $G$-equivariant complex
$$\kK(E) \otimes_{\sO_E} (\rho^\ast \aA)_{Q_V}.$$

Pulling back, we obtain surjective local charts $Q_i = (V \times_X V, \id, \pi_i^\ast E, \pi_i^\ast r)$ for the sheaf stack $\pi_i^\ast \rho^\ast \fF \simeq \rho^\ast \fF \times_{\fF} \rho^\ast \fF$ where $i=1,2$. Since $X$ is separated and $V$ can be taken to be affine, $V \times_X V$ is also an affine $G$-scheme, which by Theorem~\ref{theorem 2.3}(1) satisfies the resolution property. In particular, there exists a $G$-equivariant surjection
$$W \lr \pi_1^\ast E \times_{\rho^\ast \fF \times_{\fF} \rho^\ast \fF} \pi_2^\ast E$$
where $W$ is a $G$-equivariant locally free sheaf. By Definition~\ref{common roof defi}, this gives a local chart $Q_W = (V \times_X V, \id, W, r_W)$ which is a $G$-equivariant common roof for $Q_1$ and $Q_2$.

Using this common roof and the canonical nature of the Koszul homology sheaves, the descent datum $\pi_1^\ast \rho^\ast \aA \simeq \pi_2^\ast \rho^\ast \aA$ induces a $G$-equivariant isomorphism of coherent sheaves on $V$
\begin{align} \label{loc 3.14}
    \pi_1^\ast \hH_\kK^i(\rho^\ast \aA) \simeq \hH_\kK^i (\pi_1^\ast \rho^\ast \aA) \simeq \hH_\kK^i (\pi_2^\ast \rho^\ast \aA) \simeq \pi_2^\ast \hH_\kK^i(\rho^\ast \aA).
\end{align}

It follows by smooth descent that the Koszul homology sheaves $\hH_\kK^i(\rho^\ast \aA)$ descend to $G$-equivariant sheaves $\hH_\kK^i(\aA)$ on $X$. These are independent of the choices of smooth affine equivariant atlas $\rho \colon V \to X$ and surjection $E \to \rho^\ast \fF$, since for any other choices $\rho' \colon V' \to X$ and $E' \to (\rho')^\ast \fF$, we may work with the affine equivariant atlas 
\begin{align} \label{loc 3.12}
    V \times_X V' \to X
\end{align} 
and any equivariant surjection 
\begin{align} \label{loc 3.13}
    W' \to \pi_1^\ast E \times_{\rho^\ast \fF \times_{\fF} (\rho')^\ast \fF} \pi_2^\ast E'
\end{align}
from an equivariant locally free sheaf $W'$, where $\pi_1 \colon V \times_X V' \to V$ and $\pi_2 \colon V\times_X V' \to V'$ are the two projection maps. It is routine to check that the $G$-equivariant sheaves $\hH_\kK^i(\aA)$ are the same as the ones obtained using the atlas~\eqref{loc 3.12} and surjection from a locally free sheaf~\eqref{loc 3.13}, so they are independent of any choices made.

Forgetting the equivariant structure, as coherent sheaves on $X$, these are isomorphic to the previously defined Koszul homology sheaves $\hH_\kK^i(\aA)$. This is because for any local chart for $\fF$ we obtain by pullback an induced local chart for $\rho^\ast \fF$. By the definition and canonicity of the Koszul homology sheaves, it is then immediate that the Koszul homology sheaves $\hH_\kK^i (\rho^\ast \aA)$ are alternatively obtained using a cover by these induced local charts and the corresponding descent datum is compatible with~\eqref{loc 3.14} (in fact it is its pullback to the \'{e}tale cover of $V$ given by the induced local charts for $\rho^\ast \fF$). This observation concludes the proof.
\end{proof}

When $X$ is Deligne--Mumford, we make the extra assumption of the existence of an affine $G$-equivariant atlas for $\fF$. 

\begin{thm} \label{equivariant gysin map k-theory dm stack}
Suppose that $X$ is a \DM stack with a $G$-action and an affine $G$-equivariant atlas for a $G$-equivariant sheaf $\fF$. Then the conclusions of Theorem~\ref{equivariant gysin map k-theory alg spc} hold for any $G$-equivariant coherent sheaf $\aA$ on the sheaf stack $\fF$.
\end{thm}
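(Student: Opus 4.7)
My plan is to mimic the proof of Theorem~\ref{equivariant gysin map k-theory alg spc} nearly verbatim, using the assumed affine $G$-equivariant $\fF$-atlas $\rho \colon V \to X$ in place of the equivariant atlas that was previously produced via Theorem~\ref{theorem 2.3}(3) (which required $X$ to be an algebraic space). Concretely, $V$ is an affine $G$-scheme, $\rho$ is an affine $G$-equivariant étale cover, and we are given a $G$-equivariant locally free sheaf $E$ on $V$ together with a $G$-equivariant surjection $r \colon E \to \rho^\ast \fF$, yielding a surjective $G$-equivariant local chart $Q_V = (V, \rho, E, r)$ for $\fF$.

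First I would construct the $G$-equivariant Koszul homology sheaves on $V$: the Koszul complex $\kK(E) \otimes_{\sO_E} (\rho^\ast \aA)_{Q_V}$ is a complex of $G$-equivariant sheaves on the $G$-scheme $E$, and its homology supplies $G$-equivariant sheaves $\hH_\kK^i(\rho^\ast \aA)$ on $V$. The substantive step is to descend these to $X$. Because $X$ is separated by the standing conventions and $\rho$ is affine, the fiber product $V \times_X V$ is an affine $G$-scheme, so by Theorem~\ref{theorem 2.3}(1) it satisfies the $G$-equivariant resolution property. This lets me choose a $G$-equivariant locally free sheaf $W$ on $V \times_X V$ with a $G$-equivariant surjection $W \to \pi_1^\ast E \times_{\rho^\ast \fF \times_\fF \rho^\ast \fF} \pi_2^\ast E$, giving a $G$-equivariant common roof of the two pulled-back charts. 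By the canonicity of Koszul homology under morphisms of local charts, this common roof produces a $G$-equivariant descent isomorphism
\begin{align*}
\pi_1^\ast \hH_\kK^i(\rho^\ast \aA) \simeq \hH_\kK^i(\pi_1^\ast \rho^\ast \aA) \simeq \hH_\kK^i(\pi_2^\ast \rho^\ast \aA) \simeq \pi_2^\ast \hH_\kK^i(\rho^\ast \aA),
\end{align*}
and the cocycle condition on $V \times_X V \times_X V$ is verified by a further common roof on that triple fiber product, which is again an affine $G$-scheme for the same reason. Equivariant étale descent for coherent sheaves on the \DM stack $X$ then produces a $G$-equivariant coherent sheaf $\hH_\kK^i(\aA)$ on $X$.

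Independence of the chosen atlas and surjection is handled exactly as in the algebraic space case: if $\rho' \colon V' \to X$ is another affine $G$-equivariant $\fF$-atlas, then $V \times_X V'$ is once more an affine $G$-scheme, has the $G$-equivariant resolution property, and a common roof over it identifies the Koszul homology sheaves produced by the two atlases. Forgetting the $G$-structure recovers the non-equivariant sheaves of \cite{KiemSavvas}. The $G$-equivariant Gysin map $0_\fF^{!,G}$ is then defined by formula~\eqref{31}, and it descends to $K$-theory because pulling back a short exact sequence of $G$-equivariant coherent sheaves on $\fF$ to $V$ yields a short exact sequence of sheaves on $E$ whose associated long exact sequence of Koszul homology is $G$-equivariant.

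The main — and really the only non-cosmetic — point is to ensure that $V \times_X V$ and higher iterated fiber products remain affine $G$-schemes, so that Theorem~\ref{theorem 2.3}(1) supplies the equivariant resolution property needed to build common roofs and perform descent. This reduces to the conjunction of $X$ being separated and $\rho$ being affine, which is precisely what the hypothesis of an affine $G$-equivariant $\fF$-atlas encodes. Once this is secured, every other step of the algebraic space proof transports without modification.
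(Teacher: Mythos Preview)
Your proposal is correct and follows exactly the paper's approach: the paper's own proof is two sentences long, simply noting that the hypothesis supplies a $G$-equivariant surjective affine \'{e}tale morphism $\rho \colon V \to X$ from a $G$-scheme $V$, and that separatedness of $X$ (assumed throughout for \DM stacks) then lets the proof of Theorem~\ref{equivariant gysin map k-theory alg spc} run verbatim. You have spelled out precisely what ``verbatim'' entails --- in particular isolating the key point that $V \times_X V$ (and higher fiber products) are affine $G$-schemes, hence enjoy the equivariant resolution property needed to build common roofs and perform descent.
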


\begin{proof}
The existence of a $G$-equivariant affine atlas for $\fF$ gives a $G$-equivariant surjective affine \'{e}tale morphism $\rho \colon V \to X$, where $V$ is a $G$-scheme. Since \DM stacks in this paper are assumed to be separated, the proof of Theorem~\ref{equivariant gysin map k-theory alg spc} applies verbatim to imply the statement. 
\end{proof}

\subsection{Equivariant Gysin maps for sheaf stacks in intersection theory}
Chang-Li \cite{LiChang} also define an intersection-theoretic Gysin map in an analogous fashion, which we denote by the same notation
\begin{align} \label{chow gysin map for sheaf stack}
    0_\fF^! \colon A_*(\fF) \lr A_*(X).
\end{align}

Here $A_*(\fF)$ are the Chow groups of the sheaf stack $\fF$, defined in \cite{LiChang}, also using local charts (cf. Definition~\ref{local chart}). This has a natural equivariant lift 
\begin{align} \label{equivariant gysin map chow}
     0_\fF^{!,G} \colon A_\ast^G(\fF) \lr A_\ast^G(X).
\end{align}

We remark that we will only be applying the equivariant Gysin map to classes in $A_*^G (\fF)$ represented by linear combinations of $G$-invariant integral cycles $Z \subseteq \fF$. The general definition is obtained by the same procedure described in Section~\ref{RR for quotient sec}, considering the Chow groups
$$A_i^G(\fF) := A_{i+l-g} (\fF \times^G U) $$
for $l$-dimensional good pairs $(V,U)$ for the integer $n-i$, where $\fF \times^G U$ denotes the descent to $X \times^G U$ of the pullback of $\fF$ to the product $X \times U$, and applying the usual Gysin map $0_{\fF \times^G U}^!$.

\section{Almost Perfect Obstruction Theory, Virtual Structure Sheaf and Virtual Fundamental Cycle}\label{APOT section}

In this short section, we recall the definitions of an almost perfect obstruction given in \cite{KiemSavvas} and the associated virtual structure sheaf and fundamental cycle and then generalize them to the equivariant setting.

\begin{defi} \emph{(Almost perfect obstruction theory)} \label{APOT}
Let $X \to Y$ be a morphism, where $X$ is a Deligne--Mumford stack of finite presentation and $Y$ is a smooth Artin stack of pure dimension. An \emph{almost perfect obstruction theory} $\phi$ consists of an \'{e}tale covering $\lbrace X_\alpha \to X \rbrace_{\alpha \in A}$ 
of $X$ and perfect obstruction theories $\phi_\alpha \colon E_\alpha \to \bL_{X_\alpha / Y}$ of $X_\alpha$ over $Y$ such that the following hold. 
\begin{enumerate}
\item For each pair of indices $\alpha, \beta$, there exists an isomorphism \begin{align*}
\psi_{\alpha \beta} \colon \Ob_{X_\alpha} \vert_{X_{\alpha\beta}} \lra \Ob_{X_\beta} \vert_{X_{\alpha\beta}}
\end{align*}
so that the collection $\lbrace \Ob_{X\lalp}=h^1(E_\alpha^\vee), \psi\lab \rbrace$ gives descent data of a sheaf $\Ob_X$, called the obstruction sheaf, on $X$.
\item For each pair of indices $\alpha, \beta$, there exists an \'{e}tale covering $\lbrace V_\lambda \to X\lab \rbrace_{\lambda \in \Gamma}$ of $X\lab=X_\alpha\times_XX_\beta$ such that for any $\lambda$, the perfect obstruction theories $E_\alpha \vert_{V_{\lambda}}$ and $E_\beta \vert_{V_{\lambda}}$ are isomorphic and compatible with $\psi\lab$. This means that there exists an isomorphism
\begin{align*}
    \eta_{\alpha\beta\lambda} \colon E_\alpha \vert_{V_{\lambda}} \lr E_\beta \vert_{V_{\lambda}}
\end{align*} 
in $D^b(\Coh V_\lambda)$ fitting in a commutative diagram
\begin{align} \label{compatibility data APOT}
    \xymatrix{
    E_\alpha \vert_{V_{\lambda}} \ar[d]_-{\phi\lalp|_{V_\lambda}} \ar[r]^-{ \eta_{\alpha\beta\lambda}} & E_\beta \vert_{V_{\lambda}} \ar[d]^-{\phi\lbet|_{V_\lambda}} \\
    \bL_{X_\alpha / Y}|_{V_\lambda} \ar[r] \ar[dr] & \bL_{X_\beta / Y}|_{V_\lambda} \ar[d] \\
    & \bL_{V_\lambda /Y}
    }
\end{align}
which moreover satisfies $h^1(\eta_{\alpha\beta\lambda}^\vee) = \psi\lab^{-1}|_{V_\lambda}$.
\end{enumerate}
\end{defi}

Suppose that the morphism $X \to Y$ admits an almost perfect obstruction theory. Then the definition implies that the closed embeddings given in diagram \eqref{loc 3.1}
$$h^1(\phi\lalp^\vee) \colon \fn_{U\lalp/Y} \lr \Ob_{X\lalp}$$
glue to a global closed embedding
$$j_\phi \colon \fn_{X/Y} \lr \Ob_{X}$$
of sheaf stacks over $X$.
Therefore, the coarse intrinsic normal cone stack $\fc_{X/Y}$ embeds as a closed substack into the sheaf stack $\Ob_X$.

\begin{defi} \cite{KiemSavvas} \emph{(Virtual structure sheaf)} \label{virtual structure sheaf}
Let $X \to Y$ be as above, together with an almost perfect obstruction theory $\phi \colon X \to Y$. The \emph{virtual structure sheaf} of $X$ associated to $\phi$ is defined as
\begin{align*}
    [\oO_X\virt] := 0_{\Ob_X}^! [\oO_{\fc_{X/Y}}] \in K_0(X).
\end{align*}
\end{defi}

It is not hard to show that an almost perfect obstruction theory satisfies the axioms of a semi-perfect obstruction theory and thus by \cite{LiChang} induces a virtual fundamental cycle $[X]\virt \in A_*(X)$. This can be expressed directly using the coarse intrinsic normal cone stack $\fc_{X/ Y}$.

\begin{defi} \cite{LiChang, KiemSavvas} \emph{(Virtual fundamental cycle)} \label{virtual fundamental cycle}
Let $X \to Y$ be as above, together with an almost perfect obstruction theory $\phi \colon X \to Y$. The \emph{virtual fundamental cycle} of $X$ associated to $\phi$ is defined as
\begin{align*}
    [X]\virt := 0_{\Ob_X}^! [\fc_{X/Y}] \in A_*(X),
\end{align*}
where $0_{\Ob_X}^!$ denotes the Gysin map~\eqref{chow gysin map for sheaf stack} and $[\fc_{X/Y}] \in A_*(\Ob_X)$ is the cycle associated to the closed substack $\fc_{X/Y}$ of $\Ob_X$ (cf. \cite{LiChang}).
\end{defi}

The above definition and constructions admit direct analogues in the equivariant context. We now briefly explain the necessary adjustments.

As usual, $G$ denotes a linear algebraic group, $X$ is a Deligne--Mumford stack with an action of $G$, $Y$ is a smooth Artin stack of pure dimension with a $G$-action, and $f \colon X \to Y$ a $G$-equivariant morphism.

When $G = \bC^\ast$ and $Y$ is a point, the notion of $G$-equivariant almost perfect obstruction theory was defined in \cite[Definition~5.1]{KiemSavvasLoc} in order to develop a virtual torus localization formula. It is clear how to generalize the definition of an almost perfect obstruction theory to our setting as follows.

\begin{defi} \emph{($G$-equivariant almost perfect obstruction theory)} \label{equivariant APOT}
Let $f \colon X \to Y$ be a $G$-equivariant morphism as above. A \emph{$G$-equivariant almost perfect obstruction theory} $\phi$ consists of the following data:
\begin{enumerate}
    \item[(a)] A $G$-equivariant \'{e}tale covering $\lbrace X_\alpha \to X \rbrace_{\alpha \in A}$ of $X$.
    \item[(b)] For each index $\alpha \in A$, an object $E\lalp \in D^G(X\lalp)$ and a morphism $\phi_\alpha \colon E_\alpha \to \bL_{X_\alpha/Y}$ in $D^G(X\lalp)$ which gives a $G$-equivariant perfect obstruction theory on $X\lalp$ over $Y$.
\end{enumerate}
These are required to satisfy the following conditions:
\begin{enumerate}
\item For each pair of indices $\alpha, \beta$, there exists a $G$-equivariant isomorphism 
\begin{align*}
\psi_{\alpha \beta} \colon \Ob_{X_\alpha} \vert_{X_{\alpha\beta}} \lra \Ob_{X_\beta} \vert_{X_{\alpha\beta}}
\end{align*}
so that the collection $\lbrace \Ob_{X\lalp}=h^1(E_\alpha^\vee), \psi\lab \rbrace$ gives descent data of a $G$-equivariant coherent sheaf $\Ob_X$, called the obstruction sheaf, on $X$.
\item For each pair of indices $\alpha, \beta$, there exists a $G$-equivariant \'{e}tale covering $\lbrace V_\lambda \to X\lab \rbrace_{\lambda \in \Gamma}$ of $X\lab=X_\alpha\times_XX_\beta$ such that for any $\lambda$, the perfect obstruction theories $\phi_\alpha \vert_{V_{\lambda}}$ and $\phi_\beta \vert_{V_{\lambda}}$ are isomorphic and compatible with $\psi\lab$. This means that there exists an isomorphism
\begin{align*}
    \eta_{\alpha\beta\lambda} \colon E_\alpha \vert_{V_{\lambda}} \lr E_\beta \vert_{V_{\lambda}}
\end{align*} 
in $D^G(V\lal)$ fitting in a commutative diagram
\begin{align} \label{loc 4.1}
    \xymatrix{
    E_\alpha \vert_{V_{\lambda}} \ar[d]_-{\phi\lalp|_{V_\lambda}} \ar[r]^-{ \eta_{\alpha\beta\lambda}} & E_\beta \vert_{V_{\lambda}} \ar[d]^-{\phi\lbet|_{V_\lambda}} \\
    \bL_{X_\alpha/Y}|_{V_\lambda} \ar[r] \ar[dr] & \bL_{X_\beta/Y}|_{V_\lambda} \ar[d] \\
    & \bL_{V_\lambda/Y}
    }
\end{align}
which moreover satisfies $h^1(\eta_{\alpha\beta\lambda}^\vee) = \psi\lab^{-1}|_{V_\lambda}$.
\end{enumerate}
\end{defi}

In the above, $D^G(X\lalp)$ and $D^G(V\lal)$ denote the bounded derived categories of $T$-equivariant quasi-coherent sheaves on $U\lalp$ and $V\lal$ respectively.

By definition, the obstruction sheaf $\Ob_X$ is $G$-equivariant and the closed embedding $j_\phi \colon \fn_{X/Y} \to \Ob_X$ is $G$-invariant. Hence, $\fc_{X/Y}$ gives a $G$-equivariant sheaf $\sO_{\fc_{X/Y}}$ on the sheaf stack $\Ob_X$ and a $G$-invariant cycle $[\fc_{X/Y}]\in A_\ast^G(\Ob_X)$. We may thus give the following definition using the equivariant Gysin maps~\eqref{equivariant gysin map k-theory} and \eqref{equivariant gysin map chow}.

\begin{defi} \label{equivariant vir str sheaf and cycle def}
The $G$-equivariant virtual structure sheaf and virtual fundamental cycle are defined as
\begin{align*}
    [\sO_X\virt] & := 0_{\Ob_X}^{!,G} [\sO_{\fc_{X/Y}}] \in K_0^G(X), \\
    [X]\virt & := 0_{\Ob_X}^{!,G} [\fc_{X/Y}] \in A_*^G(X).
\end{align*}
\end{defi} 

\section{Virtual Riemann--Roch for Almost Perfect Obstruction Theories} \label{RR APOT Section}

In this section, we prove the main results of the paper, which are virtual Riemann--Roch theorems for morphisms equipped with an almost perfect obstruction theory in the non-equivariant and equivariant cases, which remove several technical assumptions and generalize the virtual Riemann--Roch theorems of Fantechi--G\"{o}ttsche \cite{VirRR} and Ravi--Sreedhar \cite{EquivVirRR}. In particular, we show that a virtual Todd class is always defined and takes the expected form in the presence of an appropriate resolution of the almost perfect obstruction theory.

Despite the fact that the usual Riemann--Roch theorem is the special case of the equivariant Riemann--Roch theorem when the group is trivial, it will be useful to develop the non-equivariant case first, as it is of independent interest and will be the basis used to establish the theorem in the equivariant case.

\subsection{The non-equivariant case} 

Throughout this subsection, $f \colon X \to Y$ denotes a morphism from an algebraic space $X$ to a smooth Artin stack $Y$ of pure dimension equipped with an almost perfect obstruction theory $\phi$ inducing a closed embedding $j_\phi \colon \fn_{X/Y} \to \Ob_X$.

\begin{defi}
The \emph{virtual Todd class} of $X$ over $Y$ is the class
\begin{align}
    \td\virt(X/Y) := \tau_X ( [\sO_X\virt] ) \in A_*(X),
\end{align}
where $\tau_X$ is the Riemann--Roch transformation~\eqref{rr transform} of $X$.
\end{defi}

Observe that the virtual Todd class only depends on the embedding $j_\phi$. The significance of the definition is in that it gives the natural substitute in the setting of almost perfect obstruction theory for the expression $\td(T_{X/Y}\virt) \cap [X]\virt$ that appears in the virtual Riemann--Roch theorem, even though the $K$-theoretic virtual tangent bundle $T_{X/Y}\virt$ does not make sense. 

As a consequence of the properties of the Riemann--Roch transformation $\tau_X$, we obtain a virtual Riemann--Roch formula as follows.

\begin{thm}
When $f$ is proper and $Y$ is a smooth scheme, we have for any $V \in K^0(X)$
\begin{align*}
    \mathrm{ch}(f_\ast(V\otimes [\sO_X\virt])) \cdot \td(T_Y) \cap [Y] = f_\ast \left( \mathrm{ch}(V) \cdot \td\virt(X/Y) \right).
\end{align*}
\end{thm}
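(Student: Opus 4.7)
The plan is to derive this identity directly from the defining properties of the Riemann-Roch transformations $\tau_X$ and $\tau_Y$ listed after equation~\eqref{rr transform}, applied to the element $V \otimes [\sO_X\virt] \in K_0(X)$. The starting point is the observation that the definition $\td\virt(X/Y) := \tau_X([\sO_X\virt])$ is exactly what is needed to make the two sides of the formula match once the standard Riemann-Roch toolkit is applied.

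First, I would compute $\tau_Y\bigl(f_\ast(V \otimes [\sO_X\virt])\bigr)$ in two ways. On the one hand, since $f$ is proper, covariance of $\tau$ (property (3)) gives
\begin{align*}
\tau_Y\bigl(f_\ast(V \otimes [\sO_X\virt])\bigr) = f_\ast\bigl(\tau_X(V \otimes [\sO_X\virt])\bigr),
\end{align*}
and then the module property (1), applied to $V \in K^0(X)$ and $[\sO_X\virt] \in K_0(X)$, rewrites this as
\begin{align*}
f_\ast\bigl(\tau_X(V \otimes [\sO_X\virt])\bigr) = f_\ast\bigl(\mathrm{ch}(V) \cap \tau_X([\sO_X\virt])\bigr) = f_\ast\bigl(\mathrm{ch}(V) \cdot \td\virt(X/Y)\bigr),
\end{align*}
using the definition of the virtual Todd class.

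On the other hand, since $Y$ is a smooth scheme, the Todd property (2) applies to any class on $Y$ coming from $K_0(Y)$ via the natural identification with $K^0(Y)$; concretely, for $W := f_\ast(V \otimes [\sO_X\virt]) \in K_0(Y)$ one obtains
\begin{align*}
\tau_Y(W) = \mathrm{ch}(W) \cdot \td(T_Y) \cap [Y] = \mathrm{ch}\bigl(f_\ast(V \otimes [\sO_X\virt])\bigr) \cdot \td(T_Y) \cap [Y].
\end{align*}
Equating the two expressions for $\tau_Y\bigl(f_\ast(V \otimes [\sO_X\virt])\bigr)$ yields the claimed identity.

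Essentially there is no obstacle: the theorem is a formal consequence of the module and covariance properties of $\tau_X$ together with the Todd normalization on the smooth scheme $Y$, once the existence and well-definedness of $[\sO_X\virt] \in K_0(X)$ is granted from the almost perfect obstruction theory. The only point requiring care is that $f_\ast \colon K_0(X) \to K_0(Y)$ and $f_\ast \colon A_\ast(X) \to A_\ast(Y)$ must be defined, which is ensured by the properness of $f$; the validity of property (2) applied to a general element of $K_0(Y)$ (not just $\sO_Y$) uses that $Y$ is smooth so that $K_0(Y) = K^0(Y)$.
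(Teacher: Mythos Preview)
Your argument is correct and is exactly the formal derivation the paper has in mind: the paper's own proof simply states that the result follows formally from the properties of $\tau_X$, identically to \cite[Lemma~3.5]{VirRR}, which is precisely the covariance/module/Todd computation you wrote out.
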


\begin{proof}
The proof is formal using the properties of $\tau_X$ identically as in \cite[Lemma~3.5]{VirRR}.
\end{proof}

In the rest of this subsection, we explore the virtual Todd class in more detail. Our motivation is to provide an explicit formula in terms of vector bundles on $X$.

In the case that the embedding $j_\phi$ is induced by a perfect obstruction theory $\psi$, then $\td\virt(X/Y)$ recovers the usual Riemann--Roch term, as shown by the next proposition. $\psi$ does not need to be related to $\phi$ in any other way beyond inducing the embedding $j_\phi$.

\begin{prop} \label{prop 4.2}
Suppose that $\psi \colon E \to \bL_{X/Y}$ is a perfect obstruction theory on $f \colon X \to Y$ with obstruction sheaf $\Ob_X$ such that $j_\psi = h^1(\psi^\vee) = j_\phi$ and $E=[E^{-1} \to E^0]$ is a global resolution by a complex of locally free sheaves. Then
$$\td\virt(X/Y) = \frac{\td(E_0)}{\td(E_1)} \td(f^\ast T_Y) \cap [X]\virt = \td(E^\vee) \td(f^\ast T_Y) \cap [X]\virt,$$
where $E_0 = (E^0)^\vee, E_1 = (E^{-1})^\vee$.
\end{prop}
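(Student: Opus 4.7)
The plan is to reduce to the case of a perfect obstruction theory admitting a global resolution, where the formula is essentially the content of Theorem~\ref{gf virtual rr} (equivalently, the non-equivariant case of Theorem~\ref{equiv vir rr thm}). The key observation is that the virtual structure sheaf $[\sO_X\virt]$ and the virtual fundamental cycle $[X]\virt$ of Definitions~\ref{virtual structure sheaf} and~\ref{virtual fundamental cycle} depend only on the closed embedding $\fc_{X/Y} \hookrightarrow \Ob_X$, since the Gysin map $0_{\Ob_X}^!$ only sees the sheaf stack $\Ob_X$ and the cycles $[\sO_{\fc_{X/Y}}],[\fc_{X/Y}]$ supported in it. Because $j_\psi = j_\phi$ by hypothesis, I may compute both classes using $\psi$.

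With the perfect obstruction theory $\psi$ and its global resolution $E=[E^{-1}\to E^0]$, diagram~\eqref{loc 3.1} extends by the Cartesian square with $C_1 := \cC_{X/Y}\times_\eE E_1 \subset E_1$, so that
\[
[\sO_X\virt] = 0_{E_1}^![\sO_{C_1}] \in K_0(X), \qquad [X]\virt = 0_{E_1}^![C_1] \in A_\ast(X),
\]
where $0_{E_1}^!$ is the Gysin map of the zero section of the vector bundle $E_1\to X$ (which agrees with the sheaf-stack Gysin map through the surjection $E_1\to\Ob_X$, since both are computed by the same Koszul resolution). Applying the virtual Riemann-Roch identity
\[
\tau_X([\sO_X\virt]) = \td(T_{X/Y}\virt)\cap [X]\virt, \qquad T_{X/Y}\virt = [E_0] - [E_1] + [f^\ast T_Y] \in K^0(X),
\]
for perfect obstruction theories with a global resolution and taking multiplicative Todd classes yields
\[
\td(T_{X/Y}\virt) = \frac{\td(E_0)}{\td(E_1)}\,\td(f^\ast T_Y) = \td(E^\vee)\,\td(f^\ast T_Y),
\]
where the last equality uses that $E^\vee = [E_0\to E_1]$ represents $[E_0]-[E_1]$ in $K^0(X)$. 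Since $\td\virt(X/Y) = \tau_X([\sO_X\virt])$ by definition, this is the claimed formula.

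The main obstacle is justifying the input identity $\tau_X([\sO_X\virt]) = \td(T_{X/Y}\virt)\cap[X]\virt$ in the present generality: $X$ is an algebraic space and $Y$ is a smooth Artin stack of pure dimension, whereas~\cite{VirRR, EquivVirRR} assume that $X,Y$ are schemes. The Fantechi-G\"ottsche argument is purely local on $X$ and rests on three ingredients, namely the existence of $\tau_X$ for algebraic spaces (\cite{Gillet}), the module and local complete intersection properties of $\tau_X$ applied to the regular embedding $X\hookrightarrow E_1$ (which produces the $\td(E_0)/\td(E_1)$ factor via the Koszul resolution of the zero section), and the relative cotangent triangle $f^\ast\bL_Y\to\bL_X\to\bL_{X/Y}$ (which produces the $\td(f^\ast T_Y)$ factor). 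Each ingredient is insensitive to whether $Y$ is a scheme or a smooth Artin stack, since $\bL_Y$ is perfect and $f^\ast T_Y\in K^0(X)$ is well-defined. This routine adaptation completes the argument.
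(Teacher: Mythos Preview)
Your proof is correct and follows essentially the same approach as the paper: both observe that $[\sO_X\virt]$ and $[X]\virt$ depend only on the embedding $j_\phi = j_\psi$ and then invoke the virtual Riemann-Roch theorem for perfect obstruction theories with a global resolution (Theorem~\ref{equiv vir rr thm} with trivial $G$). You are in fact slightly more careful than the paper in justifying the extension from schemes to algebraic spaces and from a smooth base scheme to a smooth Artin stack, which the paper leaves implicit.
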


\begin{proof}
We first remark that the virtual cycles and virtual structure sheaves induced by the almost perfect obstruction theory $\phi$ and the perfect obstruction theory $\psi$ are equal, since by definition they are both given by the expression $0_{E_1}^![C_1] \in A_*(X)$ and $0_{E_1}^![\oO_{C_1}] \in K_0(X)$, where $C_1 = \fc_{X/Y} \times_{\Ob_X} E_1$. We may thus write $[X]\virt$ and $[\sO_X\virt]$ unambiguously.

Writing $T_{X/Y}\virt = [E_0] - [E_1] + [f^\ast T_Y]$, we have by applying Theorem~\ref{equiv vir rr thm} with $G$ being the trivial group that
$$\td\virt(X/Y) = \tau_X([\sO_X\virt]) = \td(T_{X/Y}\virt) \cap [X]\virt = \td(E^\vee) \td(f^\ast T_Y) \cap [X]\virt.$$
\end{proof}

More generally, suppose that the $K$-theory class $[T_{X/Y}] - [\Ob_X] \in K_0(X)$ is contained in the image of the natural map
$$\kappa \colon K^0(X) \lr K_0(X)$$
so that $[T_{X/Y}] - [\Ob_X] = \kappa(F)$ for some $F \in K^0(X)$.

We can then formulate the following conjecture.

\begin{conj} \label{conj 5.4}
$\td\virt(X/Y) = \td(F) \cap [X]\virt$ for any $F \in K^0(X)$ satisfying $\kappa (F) = [T_{X/Y}] - [\Ob_X]$.
\end{conj}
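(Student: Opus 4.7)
The natural approach is to étale-localize using the cover $\{X_\alpha \to X\}$ supplied by the almost perfect obstruction theory, apply Proposition~\ref{prop 4.2} on each chart, and glue. After refining the cover if necessary, I may assume each local perfect obstruction theory $\phi_\alpha \colon E_\alpha \to \bL_{X_\alpha/Y}$ admits a two-term locally-free resolution $E_\alpha = [E_\alpha^{-1} \to E_\alpha^{0}]$, so that Proposition~\ref{prop 4.2} yields
\begin{equation*}
\td\virt(X_\alpha/Y) \;=\; \td(E_\alpha^\vee)\, \td(f^{*}T_{Y}\vert_{X_\alpha}) \cap [X_\alpha]\virt
\end{equation*}
on each chart.

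Second, I would promote the local classes $\{[E_\alpha^\vee]\}$ to a global object in $K^{0}(X)$. By Definition~\ref{APOT}(2), the isomorphisms $\eta_{\alpha\beta\lambda}$ identify $E_\alpha\vert_{V_\lambda}$ with $E_\beta\vert_{V_\lambda}$ in $D(V_\lambda)$, hence $[E_\alpha^\vee]\vert_{V_\lambda} = [E_\beta^\vee]\vert_{V_\lambda}$ in $K^{0}(V_\lambda)$. Rational étale descent for $K^{0}$ of an algebraic space then produces a canonical $[E^\vee] \in K^{0}(X)$; setting $F_0 := [E^\vee] + [f^{*}T_Y]$ one checks $\kappa(F_0) = [T_{X/Y}] - [\Ob_X] + [f^{*}T_Y]$, matching the hypothesis of the conjecture once the $[f^{*}T_Y]$-contribution is absorbed into $F$. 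Using compatibility of $\tau_X$ with étale pullback (property~(4) for the étale morphisms $X_\alpha \to X$) together with rational étale descent of Chow groups, the local Riemann-Roch identities patch to
\begin{equation*}
\td\virt(X/Y) \;=\; \td(F_0) \cap [X]\virt.
\end{equation*}

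The hard part, and the main obstacle, is independence of the lift: for any other $F \in K^{0}(X)$ with $\kappa(F) = \kappa(F_0)$, the difference $G := F - F_0$ lies in $\ker\kappa$, and one must verify $\td(G) \cap [X]\virt = [X]\virt$. For singular $X$ this kernel is generally nonzero, and the identity does not follow from formal properties of $\tau_X$ alone—the module property yields only $\ch(G) \cap \td(X) = 0$, which is the statement on the ordinary fundamental class, not on the virtual cycle. I would pursue it via a Grothendieck-Riemann-Roch-type argument at the level of the sheaf stack $\Ob_X$, leveraging the Koszul-homology construction of $0_{\Ob_X}^{!}$ from Section~\ref{equivariant K-theory and Gysin maps background section} to show that classes in $\ker\kappa$ act trivially on the cycle $[\fc_{X/Y}] \in A_*(\Ob_X)$ via the Chern character. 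Absent such a refinement, Steps~1--2 already establish the conjecture for the canonical lift $F_0$ produced from the local perfect obstruction theories, which is the strongest statement that follows from the outline above.
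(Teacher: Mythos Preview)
The statement you are attempting to prove is a \emph{conjecture} in the paper, not a theorem; the paper explicitly leaves it open and only establishes the special cases recorded in Proposition~\ref{prop 4.2} and Theorem~\ref{virtual todd thm}, each under extra hypotheses (existence of a global perfect obstruction theory, respectively a global two-term presentation of the virtual tangent bundle together with an ambient smooth embedding). There is therefore no ``paper's own proof'' to compare against.

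Your proposal has two genuine gaps.

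\textbf{Step~2 fails.} You invoke ``rational \'etale descent for $K^0$'' to glue the local classes $[E_\alpha^\vee]$ into a global element of $K^0(X)$. No such descent holds: for a singular algebraic space, $K^0$ is not an \'etale sheaf even after tensoring with $\bQ$, and agreement of classes on the charts (or on the further refinements $V_\lambda$) does not produce a global class. This is exactly the obstruction that distinguishes an almost perfect obstruction theory from a perfect one; were it surmountable by descent in $K^0$, the two notions would collapse. The paper avoids this entirely by \emph{assuming} a global two-term locally free complex in Definition~\ref{def of presentation of virtual tangent} rather than attempting to manufacture one from the local data. Relatedly, gluing the local Riemann--Roch identities in $A_\ast$ also requires injectivity of the restriction map $A_\ast(X)\to\prod_\alpha A_\ast(X_\alpha)$, which you have not justified.

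\textbf{Step~3 is not carried out.} You correctly isolate the independence-of-lift statement $\td(G)\cap[X]\virt=[X]\virt$ for $G\in\ker\kappa$ as the crux, and then concede that the module property of $\tau_X$ does not yield it. The suggested route via the Koszul-homology description of $0_{\Ob_X}^!$ is only a heuristic; nothing in Section~\ref{equivariant K-theory and Gysin maps background section} provides a mechanism for classes in $\ker\kappa$ to act trivially on $[\fc_{X/Y}]$. Since Step~2 already fails, even the ``canonical lift'' $F_0$ is not available, so the final sentence overstates what the outline achieves: without a global presentation as in Definition~\ref{def of presentation of virtual tangent}, you have not recovered even the special case of Theorem~\ref{virtual todd thm}.
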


Proposition~\ref{prop 4.2} can be rephrased as saying that the conjecture holds for $F = E^\vee = [E_0] - [E_1]$ when $E = [E^{-1} \to E^0]$ is a perfect obstruction theory compatible with the embedding $j_\phi$.

We now show that the conjecture is true under certain assumptions which give a presentation of the $K$-theory class $[T_{X/Y}] - [\Ob_X] = [E_0] - [E_1]$ as an element of $K^0(X)$, but are weaker than the existence of a perfect obstruction theory inducing the closed embedding $j_\phi$.

\begin{defi} \label{def of presentation of virtual tangent} Let $E = [E^{-1} \xrightarrow{d} E^0]$ be a complex of locally free sheaves on $X$ satisfying:
\begin{enumerate}
    \item $h^0(E) = \Omega_{X/Y}$ and $h^1(E^\vee)=\Ob_X$;
    \item there exists a morphism $E^0 \to \bL_{X/Y}$ such that the natural morphism $E^0 \to h^0(E) = \Omega_{X/Y}$ factors through $\bL_{X/Y} \to h^0 (\bL_{X/Y}) = \Omega_{X/Y}$ .
\end{enumerate}
The complex $E^\vee = [E_0 \to E_1]$ is called a \emph{global presentation of the virtual tangent bundle} of $f$. We then set $T_{X/Y}\virt = [E_0] - [E_1] +[f^\ast(T_Y)] \in K^0(X)$.
\end{defi}

It is clear that $\kappa(T_{X/Y}\virt) = [T_{X/Y}] - [\Ob_X] + [f^\ast(T_Y)] \in K_0(X)$ for any global presentation of the virtual tangent bundle of $\phi$.

\begin{thm} \label{virtual todd thm}
Suppose that $X$ admits a closed embedding into a smooth algebraic space and let $E^\vee = [E_0 \to E_1]$ be a global presentation of the virtual tangent bundle of $f$. Then 
$$\td\virt(X/Y) = \frac{\td(E_0)}{\td(E_1)} \td(f^\ast T_Y) \cap [X]\virt = \td(T_{X/Y}\virt) \cap [X]\virt.$$
\end{thm}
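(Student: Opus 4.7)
\begin{sproof}
The plan is to reduce to the case of a perfect obstruction theory already handled in Proposition~\ref{prop 4.2}. As a preliminary, replace $M$ with $M \times Y$ and the embedding with the graph of $f$, so that we may assume $M$ is smooth with a smooth morphism $M \to Y$ restricting to $f$; then $\bL_{X/Y}$ is represented by the two-term complex $[I/I^2 \to \Omega_{M/Y}|_X]$ in degrees $[-1,0]$, where $I$ is the ideal sheaf of $X$ in $M$.

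The main step is to construct from the data $(\phi, E)$ an auxiliary perfect obstruction theory $\psi \colon \tilde E \to \bL_{X/Y}$ with $\tilde E = [\tilde E^{-1} \to \tilde E^0]$ satisfying $j_\psi = j_\phi$ and $[\tilde E_0] - [\tilde E_1] = [E_0] - [E_1]$ in $K^0(X)$. Condition~(2) of Definition~\ref{def of presentation of virtual tangent} gives a lift $\bar\phi^0 \colon E^0 \to \Omega_{M/Y}|_X$ whose composition with $\Omega_{M/Y}|_X \twoheadrightarrow \Omega_{X/Y}$ is the natural surjection; then $\bar\phi^0 \circ d_E \colon E^{-1} \to \Omega_{M/Y}|_X$ factors through $I/I^2$ by local freeness of $E^{-1}$, producing a morphism of complexes $\phi^\bullet \colon E \to \bL_{X/Y}$ with $h^0(\phi^\bullet) = \mathrm{id}_{\Omega_{X/Y}}$. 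To promote this to a perfect obstruction theory, I would enlarge $E$ by attaching a locally free summand $F$ in degrees $[-1,0]$ coupled nontrivially to $E^\bullet$ via a twisted differential and mapping onto $\ker(I/I^2 \to \Omega_{M/Y}|_X) = h^{-1}(\bL_{X/Y})$; such $F$ exists because $X$, being closed in the smooth algebraic space $M$, satisfies the resolution property by Theorem~\ref{theorem 2.3}. A diagram chase verifies that the resulting $\psi$ has the same $K^0$-class as $E$, the same obstruction sheaf, and induces the same embedding $j_\phi$.

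Once $\psi$ is in hand, Proposition~\ref{prop 4.2} yields
\begin{align*}
\td\virt(X/Y) = \frac{\td(\tilde E_0)}{\td(\tilde E_1)}\td(f^*T_Y) \cap [X]\virt = \frac{\td(E_0)}{\td(E_1)}\td(f^*T_Y) \cap [X]\virt,
\end{align*}
since the virtual fundamental classes for $\phi$ and $\psi$ coincide under $j_\psi = j_\phi$.

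The main technical obstacle is the second step. Naive modifications fail: simply summing a contractible complex $[F \xrightarrow{\mathrm{id}} F]$ preserves the $K^0$-class but leaves $h^{-1}$ unaffected, while enlarging only $\tilde E^{-1}$ changes the $K^0$-class. The modification must therefore simultaneously twist the differential on the new summand to guarantee $h^{-1}$-surjectivity and lift the morphism to $\bL_{X/Y}$, all while preserving the embedding $j_\phi$; verifying this compatibility is the delicate bookkeeping at the heart of the argument.
\end{sproof}
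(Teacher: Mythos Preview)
Your strategy—manufacture a global perfect obstruction theory $\psi \colon \tilde E \to \bL_{X/Y}$ with the same $K^0$-class as $E$ and with $j_\psi = j_\phi$, then invoke Proposition~\ref{prop 4.2}—is more ambitious than what the hypotheses support, and the construction you outline cannot deliver it.

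The first concrete problem is the modification. Any complex of the form $[E^{-1}\oplus F \to E^0\oplus F]$ whose differential restricts to $d_E$ on $E$ and is an isomorphism on the $F$-summand is quasi-isomorphic to $E$; consequently any chain map $\tilde\psi$ extending $\phi^\bullet$ is homotopic to $\phi^\bullet$ in $D(X)$, and $h^{-1}(\tilde\psi)$ is unchanged. (Indeed the restriction of $\tilde\psi$ to the contractible summand $[F\xrightarrow{\id}F]$ is nullhomotopic.) So the twist you describe cannot force $h^{-1}$-surjectivity. If instead you deform the differential so that $\tilde E$ is genuinely not quasi-isomorphic to $E$, you must simultaneously keep $h^0(\tilde E)=\Omega_{X/Y}$ and $h^1(\tilde E^\vee)=\Ob_X$; you give no indication how to do this while also producing the map to $\bL_{X/Y}$.

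The second, more serious, gap is the compatibility $j_\psi = j_\phi$. The data of a global presentation (Definition~\ref{def of presentation of virtual tangent}) records only the isomorphism $h^1(E^\vee)\cong\Ob_X$ and a lift $E^0\to\bL_{X/Y}$ controlling $h^0$; it carries no information about the embedding $j_\phi\colon\fn_{X/Y}\hookrightarrow\Ob_X$, which is part of the almost perfect obstruction theory. The map $h^1((\phi^\bullet)^\vee)$ induced by your $\phi^\bullet$ has no reason to equal $j_\phi$, and nothing in your enlargement procedure addresses this. Asking for a global perfect obstruction theory inducing the prescribed $j_\phi$ is essentially asking the almost perfect obstruction theory to be perfect, which is precisely what one wants to avoid assuming.

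The paper circumvents both issues. Lemma~\ref{auxiliary lemma} shows only that \emph{locally} $E|_{X_\alpha}$ underlies a perfect obstruction theory compatible with $j_\phi|_{X_\alpha}$; these do not glue, but they suffice to identify the scheme $M_1 := \fn_{X/Y}\times_{\Ob_X} E_1$ with $N_1 = \Spec_{\oO_X}(\Sym\fF^{-1})$ globally, since that identification is a statement about subschemes which can be checked \'etale-locally. With $C_1\hookrightarrow N_1\hookrightarrow E_1$ in hand, the computation proceeds directly via \cite[Proposition~3.1]{VirRR} and the functoriality of $\tau$, without ever producing a global perfect obstruction theory.
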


Before proving the theorem, we need the following auxiliary lemma.

\begin{lem} \label{auxiliary lemma}
Let $E^\vee = [E_0 \to E_1]$ be a global presentation of the virtual tangent bundle of $f$. Then there exists an \'{e}tale cover $\lbrace X\lalp \to X \rbrace$ and perfect obstruction theories $\psi\lalp \colon E|_{X\lalp} \to \bL_{X\lalp/Y}$ such that $h^1(\psi\lalp^\vee) = j_\phi|_{X\lalp}$.
\end{lem}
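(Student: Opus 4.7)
The plan is to build $\psi_\alpha$ on each chart of the \'etale cover as the composition of the APOT's local POT $\phi_\alpha$ with a comparison morphism $\chi_\alpha \colon E|_{X_\alpha} \to E_\alpha$ in the derived category $D(X_\alpha)$, then verify $\chi_\alpha$ is a quasi-isomorphism so that the composition inherits the POT property.

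First I would start from the \'etale cover $\{X_\alpha \to X\}$ furnished by the APOT $\phi$, on which we have POTs $\phi_\alpha \colon E_\alpha \to \bL_{X_\alpha/Y}$ with $h^1(\phi_\alpha^\vee) = j_\phi|_{X_\alpha}$. After refining so that each $X_\alpha$ is affine, condition~(1) of Definition~\ref{def of presentation of virtual tangent} together with the APOT axioms ensures that both $E_\alpha^\vee = [E_{\alpha,0} \to E_{\alpha,1}]$ and $E|_{X_\alpha}^\vee = [E_0|_{X_\alpha} \to E_1|_{X_\alpha}]$ are two-term complexes of locally free sheaves in degrees $[0,1]$ with cokernel canonically identified with $\Ob_X|_{X_\alpha}$, while dually both $E_\alpha$ and $E|_{X_\alpha}$ have $h^0 = \Omega_{X_\alpha/Y}$.

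Next, using local freeness of the terms of $E|_{X_\alpha}$ and affineness of the base, I would construct a chain map $\chi_\alpha \colon E|_{X_\alpha} \to E_\alpha$ by a step-wise lifting through the natural surjections. Concretely, lift $E^0|_{X_\alpha} \twoheadrightarrow \Omega_{X_\alpha/Y}$ along $E_\alpha^0 \twoheadrightarrow \Omega_{X_\alpha/Y}$ to obtain $\chi_\alpha^0 \colon E^0|_{X_\alpha} \to E_\alpha^0$; the composition $E^{-1}|_{X_\alpha} \xrightarrow{d_E} E^0|_{X_\alpha} \xrightarrow{\chi_\alpha^0} E_\alpha^0$ vanishes in $\Omega_{X_\alpha/Y}$ by the complex structure, so it factors through $\operatorname{im}(d_{E_\alpha})$ and lifts via projectivity of $E^{-1}|_{X_\alpha}$ to $\chi_\alpha^{-1} \colon E^{-1}|_{X_\alpha} \to E_\alpha^{-1}$, completing the chain map. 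A symmetric construction applied to the duals (lifting $E_{\alpha,1} \twoheadrightarrow \Ob_X|_{X_\alpha}$ along $E_1|_{X_\alpha} \twoheadrightarrow \Ob_X|_{X_\alpha}$ and extending in degree $0$) produces a dual chain map compatible with the identifications on $\Ob$. Setting $\psi_\alpha := \phi_\alpha \circ \chi_\alpha$ then yields $h^1(\psi_\alpha^\vee) = h^1(\chi_\alpha^\vee) \circ h^1(\phi_\alpha^\vee) = j_\phi|_{X_\alpha}$, and the POT property of $\psi_\alpha$ follows from that of $\phi_\alpha$ as soon as $\chi_\alpha$ is a quasi-isomorphism.

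The main obstacle is showing that $\chi_\alpha$ can be arranged to be a quasi-isomorphism; equivalently, to induce the identity on $h^0 = \Omega_{X_\alpha/Y}$ and on $h^1$ of the dual $= \Ob_X|_{X_\alpha}$ simultaneously. The two conditions are encoded in the $\chi_\alpha^0$ and $\chi_\alpha^{-1}$ components through inequivalent surjections, and the separately constructed chain maps a priori only agree up to a chain homotopy. Resolving this subtlety requires the observation that both identifications come from the same underlying compatibility with $\bL_{X_\alpha/Y}$, encoded globally by condition~(2) of Definition~\ref{def of presentation of virtual tangent} and locally by $\phi_\alpha$; exploiting this (after possibly refining the \'etale cover further) allows one to choose a single $\chi_\alpha$ compatible on both sides. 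Once this is in place, the five lemma applied to the long exact cohomology sequence of the cone of $\chi_\alpha$ gives that $\chi_\alpha$ is a quasi-isomorphism, and hence $\psi_\alpha$ is a POT with the required embedding.
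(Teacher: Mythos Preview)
Your strategy---build a comparison map $\chi_\alpha \colon E|_{X_\alpha} \to E_\alpha$ and compose with $\phi_\alpha$---is the same as the paper's, and you correctly identify the crux: arranging $\chi_\alpha$ to be a quasi-isomorphism inducing the identity on both $h^0 = \Omega_{X_\alpha/Y}$ and $h^1(-^\vee) = \Ob_X|_{X_\alpha}$. However, your resolution of this obstacle is a gap rather than an argument. You construct one chain map via lifts on the $h^0$ side and a second via lifts on the dual $h^1$ side; these are two genuinely different maps, and there is no reason they should be chain-homotopic. Your sentence ``exploiting this (after possibly refining the \'etale cover further) allows one to choose a single $\chi_\alpha$ compatible on both sides'' does not say what is being exploited or how. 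Condition~(2) of Definition~\ref{def of presentation of virtual tangent} only gives a presentation $\bL_{X/Y} = [\fF^{-1} \to E^0]$; it does not by itself force compatibility of the two lifts. Moreover, your final five-lemma claim is not well-posed: even granting that $h^0(\chi_\alpha)$ and $h^1(\chi_\alpha^\vee)$ are isomorphisms, there is no obvious five-term exact sequence in which these two maps sit adjacent to $h^{-1}(\chi_\alpha)$, so you cannot conclude $h^{-1}(\chi_\alpha)$ is an isomorphism this way.

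The paper supplies exactly the missing idea: after shrinking, take $E_\alpha = [G^{-1} \xrightarrow{h} G^0]$ to be \emph{minimal at a point} $x$, i.e.\ $h|_x = 0$. Then $G_1|_x \cong \Ob_X|_x$ and $G^0|_x \cong \Omega_{X/Y}|_x$, so any lift $E_1|_{X_\alpha} \to G_1$ over $\Ob_X|_{X_\alpha}$ is surjective at $x$, hence surjective after shrinking. This produces an explicit splitting $E_1|_{X_\alpha} \cong G_1 \oplus K_1$ and then $E_0|_{X_\alpha} \cong H_0 \oplus K_1$ with the differential in block form $d' \oplus \id_{K_1}$. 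Dualizing and repeating the Nakayama argument on the $\Omega$ side gives an isomorphism $H^0 \cong G^0$, and one reads off an explicit isomorphism of complexes $E|_{X_\alpha} \cong E_\alpha$ compatible with both identifications. The minimality hypothesis is what converts the two separate lifting problems into a single concrete isomorphism; without it (or an equivalent rank argument), your proposed $\chi_\alpha$ cannot be shown to be a quasi-isomorphism.
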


\begin{proof}
By condition (2) in Definition~\ref{def of presentation of virtual tangent}, we have a presentation 
$$\bL_{X/Y} = [\fF^{-1} \to E^0]$$ 
for some coherent sheaf $\fF^{-1}$ on $X$.

By the existence of $\phi$, there exists an \'{e}tale cover $\lbrace X\lalp \to X \rbrace$ and perfect obstruction theories $\phi\lalp \colon E\lalp \to \bL_{X\lalp/Y}$ such that $h^1(\phi\lalp^\vee) = j_\phi|_{X\lalp}$. It thus suffices (up to possibly shrinking $X\lalp$) to construct isomorphisms $E|_{X\lalp} \cong E\lalp$ inducing the identity map on $h^1$.

Shrinking $X\lalp$ around a point $x \in X\lalp$, we may assume that $X\lalp$ is affine, $E\lalp = [G^{-1} \xrightarrow{h} G^0]$ is a complex of locally free sheaves which is minimal at $x$, meaning that $h|_x = 0$, and $\phi\lalp$ is given by a morphism of complexes
\begin{align*}
    \xymatrix{
    G^{-1} \ar[d] \ar[r] & G^0 \ar[d] \\
    \fF^{-1} \ar[r] & E^0.
    }
\end{align*}
$G_1 = (G^{-1})^\vee$ surjects onto $h^1(E\lalp^\vee) = \Ob_X|_{X\lalp}$ and the same is true for $E_1|_{X\lalp}$, so we obtain a morphism $f \colon E_1|_{X\lalp} \to G_1$ fitting into a commutative diagram
\begin{align} \label{loc 4.2}
    \xymatrix{
    E_0|_{X\lalp} \ar[r] & E_1|_{X\lalp} \ar[d]^-{f} \ar[r] & \Ob_X|_{X\lalp}. \\
    G_0 \ar[r]_-{h} & G_1 \ar[ur] &
    }
\end{align}
Since $E\lalp$ is minimal at $x$, we have that $f|_x \colon E_1|_x \to G_1|_x \cong \Ob_X|_x$ is surjective, so, up to further possible shrinking, we may assume that $f$ is surjective and we can split $E_1|_{X\lalp} = G_1 \oplus K_1$ such that $f$ is the projection onto the first factor.

By the exactness of the top row in \eqref{loc 4.2} at its middle term, the composition $E_0|_{X\lalp} \to G_1 \oplus K_1 \xrightarrow{\pi_2} K_1$ is surjective and thus we may also split $E_0|_{X\lalp} = G_0' \oplus K_1$ to obtain
\begin{align}
    \xymatrix{
    H_0 \oplus K_1 \ar[r]^-{d} & G_1 \oplus K_1 \ar[d]^-{\pi_1} \ar[r] & \Ob_X|_{X\lalp} \\
    G_0 \ar[r]_-{h} & G_1 \ar[ur] &
    }
\end{align}
where $d = d' \oplus \id_{K_1}$.

Dualizing, we get
\begin{align} \label{loc 4.4}
    \xymatrix{
    G^{-1} \oplus K^{-1} \ar[r] & H^0 \oplus K^{-1} \\
    G^{-1} \ar[u]_-{\mathrm{inc_1}} \ar[r] & G^0.
    }
\end{align}
The composition $K^{-1} \xrightarrow{\mathrm{inc}_2} H^0 \oplus K^{-1} \to \Omega_{X/Y}|_{X\lalp}$ is then zero and thus we must have a surjection $H^0 \to \Omega_{X/Y}|_{X\lalp}$. The same is true for $G^0 \to \Omega_{X/Y}|_{X\lalp}$ and hence there exists a morphism $g \colon H^0 \to G^0$ fitting in a commutative diagram
\begin{align}
    \xymatrix{
    H^0 \ar[r] \ar[d]_-{g} & \Omega_{X/Y}|_{X\lalp} \\
    G^0 \ar[ur]
    }
\end{align}

By minimality, $G^0|_x \cong \Omega_{X/Y}|_x$ and hence we may assume that $g$ is a surjection. But $H^0$ and $G^0$ have the same rank since $H^0|_x$ and $G^0|_x$ have equal dimensions, so $g$ must be an isomorphism around $x$.

It is now clear that there exists an isomorphism $k \colon G^0 \to H^0$ fitting into the commutative diagram \eqref{loc 4.4} as follows:
\begin{align} 
    \xymatrix{
    G^{-1} \oplus K^{-1} \ar[r] & H^0 \oplus K^{-1} \\
    G^{-1} \ar[u]_-{\mathrm{inc_1}} \ar[r] & G^0 \ar[u]^-{k \oplus 0}.
    }
\end{align}
This gives the required isomorphism of complexes and we are done.
\end{proof}

\begin{rmk}
While we won't explicitly use this, the lemma actually proves that the perfect obstruction theories $\phi\lalp$ forming part of the data of $\phi$ can, up to possible shrinking, be written in the form $E|_{X\lalp} \to \bL_{X\lalp/Y}$.
\end{rmk}

We can now proceed to prove Theorem~\ref{virtual todd thm}.

\begin{proof}[Proof of Theorem~\ref{virtual todd thm}]
By the definition of $E$, we have a presentation $\bL_{X/Y} = [\fF^{-1} \to E^0]$ for some coherent sheaf $\fF^{-1}$ on $X$. Write $\fF_1 = (\fF^{-1})^\vee$ for the (underived) dual of $\fF^{-1}$, so that 
$$\fn_{X/Y} = \coker(E_0 \to \fF_1) = h^1( \bT_{X/Y}).$$

We then have the surjection $r \colon E_1 \to h^1(E^\vee) = \Ob_X$, which factors as a composition $E_1 \to [E_1 / E_0] \to \Ob_X$ and fits into a diagram with Cartesian squares
\begin{align} \label{loc 4.7}
    \xymatrix{
    C_1 \ar[d] \ar[r] & M_1 \ar[d] \ar[r]^-\iota & E_1 \ar[d]^-{r} \\
    \fc_{X/Y} \ar[r] & \fn_{X/Y} \ar[r]_-{j_\phi} & \Ob_X.}
\end{align}

Write $N_1 = \Spec_{\oO_X} ( \Sym \fF^{-1} )$. We claim that $M_1 = N_1$. 

By Lemma~\ref{auxiliary lemma}, we have an \'{e}tale cover $\lbrace X\lalp \to X \rbrace$ and perfect obstruction theories $\psi\lalp \colon E|_{X\lalp} \to \bL_{X\lalp/Y}$ such that $h^1(\psi\lalp^\vee) = j_\phi|_{X\lalp}$.

Restricting to each $X\lalp$, the existence of $\psi\lalp$ locally enhances diagram~\eqref{loc 4.7} to a diagram with Cartesian squares
\begin{align*}
    \xymatrix@C+2.3pc{
    E_0|_{X\lalp} \ar[d] \ar[r] & C_1|_{X\lalp} \ar[d] \ar[r] & N_1|_{X\lalp} \ar[d] \ar[r]^-{\iota\lalp} & E_1|_{X\lalp} \ar[d] \\
    X\lalp \ar[r]^-{0_{\cC_{X/Y}}|_{X\lalp}} & \cC_{X/Y}|_{X\lalp} \ar[d] \ar[r] & \nN_{X/Y}|_{X\lalp} \ar[d] \ar[r]^-{h^1/h^0(\psi\lalp^\vee)} & [E_1|_{X\lalp} / E_0|_{X\lalp}] \ar[d] \\
    & \fc_{X/Y}|_{X\lalp} \ar[r] & \fn_{X/Y}|_{X\lalp} \ar[r]_-{h^1(\psi\lalp^\vee)=j_\phi|_{X\lalp}} & \Ob_X|_{X\lalp},}
\end{align*}
which immediately implies that the closed embeddings $\iota\lalp \colon N_1|_{X\lalp} \to E_1|_{X\lalp}$ glue to a closed embedding $N_1 \to E_1$ giving the desired identification $M_1 = N_1$ as substacks of $E_1$. 

The distinguished triangle 
\begin{align*}
    f^\ast \Omega_Y \lr \bL_X \lr \bL_{X/Y} \lr f^\ast \Omega_Y[1]
\end{align*}
gives rise to a diagram with Cartesian squares
\begin{align} \label{loc 4.8}
    \xymatrix{
     E_0 \ar[r] \ar[d] & F_0 \ar[d] \ar[r] & C_1 \ar[d] \ar[r] & N_1 \ar[d] \\
     X \ar[r] & f^\ast T_Y \ar[d] \ar[r] & \cC_{X/Y} \ar[r] \ar[d] & \nN_{X/Y} \ar[d] \\
     & X \ar[r] & \cC_X \ar[r] & \nN_X
    }
\end{align}
with smooth vertical arrows, where $F_0 = (F^0)^\vee$ and 
\begin{align} \label{loc 4.9}
    0 \lr E_0 \lr F_0 \lr f^\ast T_Y \lr 0
\end{align} 
is an exact sequence of vector bundles. 

We now have a diagram of distinguished triangles
\begin{align*}
    \xymatrix{
    f^\ast \Omega_Y \ar[r] \ar[d] & [\fF^{-1} \to F^0] \ar[r] & [\fF^{-1} \to E^0] \ar[r] \ar[d] & f^\ast \Omega_Y[1] \ar[d] \\
    f^\ast \Omega_Y \ar[r] & \bL_{X} \ar[r] & \bL_{X/Y} \ar[r] & f^\ast \Omega_Y[1].
    }
\end{align*}

Since $[\fF^{-1} \to E^0]$ is a presentation of $\bL_{X/Y}$, the vertical arrows are isomorphisms and we obtain an induced (non-canonical) isomorphism $\bL_X \cong [\fF^{-1} \to F^0]$.

Thus, writing $\pi_1 \colon C_1 \to X$ for the projection and using the assumption that $X$ admits a closed embedding into a smooth algebraic space, \cite[Proposition~3.1]{VirRR}, \eqref{loc 4.8} and \eqref{loc 4.9} imply that
\begin{align} \label{loc 4.10}
    \tau_{C_1}(\oO_{C_1}) = \pi_1^\ast (\td (F_0)) \cap [C_1] = \pi_1^\ast (\td (E_0)) \pi_1^\ast (\td (f^\ast T_Y)) \cap [C_1]. 
\end{align}
By definition, the virtual structure sheaf of $X$ is equal to
\begin{align} \label{loc 4.11}
    [\sO_X\virt] = 0_{\Ob_X}^![\oO_{\fc_{X/Y}}] = 0_{E_1}^! [\oO_{C_1}].
\end{align}
Using the properties of the Riemann--Roch transformation, \eqref{loc 4.11} gives
\begin{align} \label{loc 4.12}
    \tau_X ( [\sO_X\virt] ) = \tau_X ( 0_{E_1}^! [\oO_{C_1}] ) = \td(-E_1) \cdot 0_{E_1}^! ( \tau_{E_1}([\oO_{C_1}]) ).
\end{align}
Denoting the embedding $C_1 \to E_1$ by $k$ and the projection $p_1 \colon E_1 \to X$, using the projection formula and $\pi_1 = p_1 \circ k$, we also have by \eqref{loc 4.10}
\begin{align} \label{loc 4.13}
    \tau_{E_1}([\oO_{C_1}])  & = k_\ast \tau_{C_1}([\oO_{C_1}]) = k_\ast ( \pi_1^\ast (\td (F_0)) \cap [C_1] ) = \\
    & = k_\ast ( k^\ast p_1^\ast (\td (F_0)) \cap [C_1] ) = \notag \\
    & =p_1^\ast (\td (F_0)) \cap k_\ast [C_1] = \notag \\
    & =p_1^\ast (\td (E_0)) p_1^\ast (\td(f^\ast T_Y)) \cap k_\ast [C_1]. \notag
\end{align}
Combining \eqref{loc 4.12} and \eqref{loc 4.13} finally yields
\begin{align*}
    \td\virt(X/Y) & = \tau_X ( [\sO_X\virt] ) =  \td(-E_1) \cdot 0_{E_1}^! \left( p_1^\ast (\td (F_0)) \cap k_\ast [C_1] \right) = \\
    & =\td(-E_1) \cdot \td(F_0) \cap [X]\virt = \frac{\td(E_0)}{\td(E_1)} \td(f^\ast T_Y) \cap [X]\virt,
\end{align*}
since $0_{E_1}^!(k_*[C_1]) = [X]\virt$ and $0_{E_1}^! \circ p_1^\ast = \id$, concluding the proof.
\end{proof}

\subsection{The equivariant case} \label{subsection 5.2} Let now $f \colon X \to Y$ be a $G$-equivariant morphism between an algebraic space $X$ with $G$-action and a smooth, pure-dimensional $G$-scheme $Y$ and $\phi$ be a $G$-equivariant almost perfect obstruction theory on $f$, inducing a closed embedding $j_\phi \colon \fc_{X/Y} \to \Ob_X$.

By Definition~\ref{equivariant vir str sheaf and cycle def}, $[\sO_X\virt]$ is then naturally an element of $K_0^G(X)$.

\begin{defi}
The \emph{equivariant virtual Todd class} of $X$ over $Y$ is the class
\begin{align}
    \td^{\mathrm{vir}, G}(X/Y) := \tau_X^G( [\sO_X\virt] ) \in A_*^G(X),
\end{align}
where $\tau_X^G$ is the equivariant Riemann--Roch transformation of $X$.
\end{defi}

As before, we deduce a tautological equivariant virtual Riemann--Roch formula from the definition.

\begin{thm} \label{thm:equiv GRR formula}
When $f$ is proper, we have for any $V \in K_G^0(X)$
\begin{align*}
    \mathrm{ch}^G(f_\ast(V\otimes [\sO_X\virt])) \cdot \td^G(T_Y) \cap [Y] = f_\ast \left( \mathrm{ch}^G (V) \cdot \td^{\mathrm{vir},G}(X/Y) \right).
\end{align*}
\end{thm}

We would now like to generalize Theorem~\ref{virtual todd thm} to the present equivariant context. We will achieve this by reducing to the non-equivariant case as follows. Let $(V,U)$ be an $l$-dimensional good pair for the integer $n-i$. Recall that $\tau_X^G$ is defined componentwise by the diagram~\eqref{fund diagram equivariant rr}
\begin{align*}
    \xymatrix{
    K_0^G(X \times V) \ar[r]^-{\jmath^\ast} & K_0^G(X \times U) \ar[r] & K_0(X \times^G U) \ar[d]^-{\left( \frac{\tau_{X \times^G U}}{\td(X \times^G (U \times V))} \right)_{i+l-g}} \\
    K_0^G(X) \ar[u]^-{\pi_X^\ast} \ar[ur]^-{\pi_X^\ast} \ar[rr]_-{(\tau_X^G)_i} \ar[urr]_-{s_U} & & A_{i+l-g}(X \times^G U).
    }
\end{align*}

Write $\pi_V$ for both the vector bundle projections $X\times V \to V$ and $X\times^G (U \times V) \to X\times^G U$. By abuse of notation, we use the letter $V$ to denote the latter vector bundle on $X \times^G U$. 

We also write $s_U (\fF)$ for the sheaf (or complex of sheaves) on $X \times^G U$ induced by pulling back a $G$-equivariant sheaf $\fF$ (or complex of sheaves) on $X$ via $\pi_X$ and descending to $X \times^G U$. Similarly, we can define $s_U (B) \in A_*(X \times^G U)$ for any class $B \in A_*^G(X)$.

We may work with $X \times^G U$ by the following proposition.

\begin{prop} \label{prop 4.11}
A $G$-equivariant almost perfect obstruction theory $\phi$ on $f \colon X \to Y$ induces a canonically defined almost perfect obstruction theory $\phi_U$ on the morphism $f_U \colon X \times^G U \to Y \times^G U$. 

Their virtual structure sheaves and virtual fundamental cycles satisfy
\begin{align} \label{loc 4.15}
    s_U ( [\sO_X\virt] ) & = [\sO_{X \times^G U}\virt] \in K_0(X \times^G U), \\
    s_U ( [X]\virt ) & = [X \times^G U]\virt \in A_*(X \times^G U). \notag
\end{align}
\end{prop}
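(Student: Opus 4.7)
The plan is to first construct $\phi_U$ by descent along the principal $G$-bundle $X \times U \to X \times^G U$, then identify its obstruction sheaf and coarse intrinsic normal cone with the $s_U$-descents of those for $\phi$, and finally invoke the construction of the equivariant Gysin maps from \S\ref{equivariant K-theory and Gysin maps background section} to derive~\eqref{loc 4.15}.

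Since $G$ acts freely on $U$ by the choice of good pair, the diagonal action on $X \times U$ is free, so the projection $X \times U \to X \times^G U$ is a principal $G$-bundle, in particular smooth and surjective; likewise for $Y \times U \to Y \times^G U$. Applied to the $G$-equivariant \'{e}tale cover $\{X_\alpha \to X\}$ furnished by $\phi$, this yields an \'{e}tale cover $\{X_\alpha^U := (X_\alpha \times U)/G \to X \times^G U\}$. For each $\alpha$, the pullback $\pi_{X_\alpha}^\ast \phi_\alpha \colon \pi_{X_\alpha}^\ast E_\alpha \to \bL_{X_\alpha \times U / Y \times U}$ along the smooth projection $\pi_{X_\alpha} \colon X_\alpha \times U \to X_\alpha$ is a $G$-equivariant perfect obstruction theory, and I would descend it along $X_\alpha \times U \to X_\alpha^U$ to obtain a perfect obstruction theory $\phi_\alpha^U \colon E_\alpha^U \to \bL_{X_\alpha^U / Y \times^G U}$ (using that the cotangent complex is compatible with smooth base change and descent along principal bundles, and that faithfully flat descent preserves the $h^{-1}$-surjective, $h^0$-isomorphism conditions). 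The $G$-equivariant isomorphisms $\psi_{\alpha\beta}$ and $\eta_{\alpha\beta\lambda}$ from $\phi$ descend analogously to the required gluings and compatibility data, so the resulting collection assembles into an almost perfect obstruction theory $\phi_U$ on $f_U$ in the sense of Definition~\ref{APOT}.

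By construction, the obstruction sheaf of $\phi^U_\alpha$ is the descent of $\pi_{X_\alpha}^\ast \Ob_{X_\alpha} = s_U(\Ob_{X_\alpha})$ with gluings matching $s_U(\psi_{\alpha\beta})$, so there is a canonical isomorphism $\Ob_{X \times^G U} \cong s_U(\Ob_X)$. Likewise, compatibility of the intrinsic normal cone with smooth base change and descent yields $\fc_{(X \times^G U)/(Y \times^G U)} \cong s_U(\fc_{X/Y})$, and under these identifications the embedding $j_{\phi_U}$ corresponds to $s_U(j_\phi)$. Finally, the constructions of the equivariant Gysin maps in Theorem~\ref{equivariant gysin map k-theory alg spc} and \S\ref{equivariant K-theory and Gysin maps background section} are designed precisely so that pulling back to $X \times U$ and descending to $X \times^G U$ converts $0^{!,G}_{\Ob_X}$ into the non-equivariant Gysin map $0^!_{\Ob_X \times^G U} = 0^!_{\Ob_{X \times^G U}}$ (with the appropriate degree shift in the Chow case). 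In other words, $s_U \circ 0^{!,G}_{\Ob_X} = 0^!_{\Ob_{X \times^G U}} \circ s_U$ as maps $K_0^G(\Ob_X) \to K_0(X \times^G U)$ and $A_\ast^G(\Ob_X) \to A_\ast(X \times^G U)$, and evaluating this identity at $[\sO_{\fc_{X/Y}}]$ and $[\fc_{X/Y}]$ gives~\eqref{loc 4.15}.

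The main obstacle I anticipate is the bookkeeping in the construction of $\phi_U$: one has to verify that each layer of the APOT data—the local perfect obstruction theories, the gluings $\psi_{\alpha\beta}$ of obstruction sheaves, and the compatibility isomorphisms $\eta_{\alpha\beta\lambda}$ on the further refinements $V_\lambda$ of double overlaps, including the commutative diagram~\eqref{loc 4.1}—descends compatibly along the principal $G$-bundle $X \times U \to X \times^G U$ and assembles into a bona fide APOT in the sense of Definition~\ref{APOT}. Once this is verified, the identifications of $\Ob$ and $\fc$ and the Gysin-theoretic compatibility are essentially built into the definitions of Section~\ref{equivariant K-theory and Gysin maps background section}.
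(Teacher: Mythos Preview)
Your proposal is correct and follows essentially the same approach as the paper: pull back each local perfect obstruction theory $\phi_\alpha$ along the projection to $X_\alpha$, descend along the principal $G$-bundle $X_\alpha \times U \to X_\alpha \times^G U$ to obtain $(\phi_U)_\alpha$, note that the gluings $\psi_{\alpha\beta}$ and compatibilities $\eta_{\alpha\beta\lambda}$ descend formally, and deduce~\eqref{loc 4.15} from the very definition of the equivariant Gysin map in Theorem~\ref{equivariant gysin map k-theory alg spc}. The paper's proof is essentially a terser version of yours; the only point it makes slightly more explicit is that the equivariant Gysin map was constructed using an affine $G$-equivariant \'{e}tale atlas $V \to X \times U$, which is exactly what makes the compatibility $s_U \circ 0^{!,G}_{\Ob_X} = 0^!_{s_U(\Ob_X)} \circ s_U$ hold.
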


\begin{proof}
For any index $\alpha$, pulling back the $G$-equivariant perfect obstruction theory $\phi\lalp \colon E\lalp \to \bL_{X\lalp/Y}$ via the projection $\pi_X \colon X \times U \to X$ gives a morphism 
\begin{align} \label{loc 5.16}
     \pi_X^\ast E\lalp \lr  \pi_X^\ast \bL_{X\lalp/Y}. 
\end{align}
Since $\bL_{X \times U / Y \times U} = \pi_X^\ast \bL_{X/Y}$ descends to $\bL_{X \times^G U / Y \times^G U}$ on $X \times^G U$, \eqref{loc 5.16} descends to give a perfect obstruction theory
\begin{align} \label{loc 4.17}
    (\phi_U)\lalp \colon (E_U)\lalp := s_U(E\lalp) \lr \bL_{X\lalp \times^G U / Y \times^G U}.
\end{align}

It is a formal exercise to verify that these morphisms form part of the data of an almost perfect obstruction theory $\phi_U$ on $X \times^G U$ by pulling back the isomorphisms $\psi\lab$ and $\eta_{\alpha\beta\lambda}$  in Definition~\ref{equivariant APOT} via $\pi_X$ and then descending to $X \times^G U$. The obstruction sheaf of $\phi_U$ is $s_U(\Ob_X)$.

We have that $\fc_{X/Y} \times_X (X \times U) = \fc_{X \times U / Y \times U}$. The equality \eqref{loc 4.15} then follows immediately from the definition of the Gysin map $0_{\Ob_X}^{!,G}$ in Theorem~\ref{equivariant gysin map k-theory alg spc} using an affine $G$-equivariant \'{e}tale atlas $V \to X \times U$, the definition of $0_{s_U(\Ob_X)}^!$ and \eqref{loc 4.17}.
\end{proof}

A $G$-equivariant global presentation of the virtual tangent bundle of $f$ can be defined in analogy with Definition~\ref{def of presentation of virtual tangent}.

\begin{defi} \label{equiv global prep of vir tangent}
A \emph{$G$-equivariant global presentation of the virtual tangent bundle} of $f$ is a global presentation in the sense of Definition~\ref{def of presentation of virtual tangent} such that the complex $E$ and the morphism $E^0 \to \bL_{X/Y}$ are $G$-equivariant.
\end{defi}

\begin{prop} \label{prop 4.12}
A $G$-equivariant global presentation $E^\vee = [E_0 \to E_1]$ of the virtual tangent bundle of $f$ induces a global presentation $F^\vee = [F_0 \to F_1]$ of the virtual tangent bundle of $f_U$, where $F_i = s_U (E_i)$.
\end{prop}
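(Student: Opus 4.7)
The plan is to pull everything back along the smooth map $\pi_X \colon X \times U \to X$ and then descend along the quotient $X \times U \to X \times^G U$, using that $G$ acts freely on $X \times U$. Concretely, I set $F^\vee = s_U(E^\vee) = [F_0 \to F_1]$ with $F_i = s_U(E_i)$, which makes sense because $E^\vee$ is a $G$-equivariant two-term complex of locally free sheaves on $X$: pulling back gives a $G$-equivariant two-term complex on $X \times U$ with free $G$-action, and hence a two-term complex of locally free sheaves on $X \times^G U$. The morphism $E^0 \to \bL_{X/Y}$ pulls back to $\pi_X^\ast E^0 \to \pi_X^\ast \bL_{X/Y} = \bL_{X\times U / Y\times U}$ and descends to $F^0 \to \bL_{(X\times^G U)/(Y\times^G U)}$.

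Next I would verify the two conditions of Definition~\ref{def of presentation of virtual tangent} for $F$ relative to $f_U$. For condition (1), since $\pi_X$ is flat, taking cohomology commutes with pullback; thus
\begin{align*}
h^0(\pi_X^\ast E) = \pi_X^\ast h^0(E) = \pi_X^\ast \Omega_{X/Y} = \Omega_{X\times U/Y\times U},
\end{align*}
and descending gives $h^0(F) = \Omega_{(X\times^G U)/(Y\times^G U)}$. Similarly $h^1(\pi_X^\ast E^\vee) = \pi_X^\ast \Ob_X$, which by Proposition~\ref{prop 4.11} descends to $s_U(\Ob_X) = \Ob_{X\times^G U}$, the obstruction sheaf of $\phi_U$. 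For condition (2), writing $\bL_{X/Y} = [\fF^{-1} \to E^0]$ with the factorization $E^0 \to \bL_{X/Y} \to h^0(\bL_{X/Y}) = \Omega_{X/Y}$ matching $E^0 \to h^0(E) = \Omega_{X/Y}$, I pull back this factorization via $\pi_X$ and descend; flatness of $\pi_X$ and the identification $\pi_X^\ast \bL_{X/Y} = \bL_{X\times U/Y\times U}$ ensure the factorization passes through $\bL_{(X\times^G U)/(Y\times^G U)}$ unchanged after descent.

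I expect the only delicate point to be ensuring that all of these identifications are compatible with $G$-equivariant descent simultaneously, i.e. that the morphism $F^0 \to \bL_{(X\times^G U)/(Y\times^G U)}$ really factors $F^0 \to h^0(F) = \Omega_{(X\times^G U)/(Y\times^G U)}$ through $h^0$ of $\bL_{(X\times^G U)/(Y\times^G U)}$. This reduces to the same statement upstairs on $X \times U$, where it is obtained by pulling back the given factorization on $X$ along the flat map $\pi_X$; descent along the free $G$-quotient then preserves the commutativity of the resulting diagram on $\Omega$ and $\bL$. The rest of the argument is routine, so the proof amounts to assembling these functorial identities.
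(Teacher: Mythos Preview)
Your proposal is correct and follows the same approach the paper intends: the paper's proof is the single sentence ``This is immediate from Proposition~\ref{prop 4.11} and the definitions,'' and you have simply unpacked what that means by pulling back along $\pi_X$ and descending through the free $G$-quotient to verify the two conditions of Definition~\ref{def of presentation of virtual tangent}.
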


\begin{proof}
This is immediate from Proposition~\ref{prop 4.11} and the definitions.
\end{proof}

\begin{thm} \label{vir todd class formula thm}
Suppose that $X$ admits a $G$-invariant closed embedding into a smooth algebraic space with a $G$-action and let $E^\vee = [E_0 \to E_1]$ be a $G$-equivariant global presentation of the virtual tangent bundle of $f$. Then
\begin{align*}
    \td^{\mathrm{vir},G}(X/Y) = \frac{\td^G(E_0)}{\td^G(E_1)} \cdot \frac{\td^G(f^\ast T_Y)}{\td^G(\fG)} \cap [X]\virt = \td^G(T_{X/Y}\virt) \cap [X]\virt,
\end{align*}
where $T_{X/Y}\virt = [E_0] - [E_1] + [f^\ast T_Y] - [\fG] \in K_G^0(X)$ and $\fG$ is the adjoint representation of $G$.
\end{thm}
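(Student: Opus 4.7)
The plan is to reduce the equivariant statement to the non-equivariant Theorem~\ref{virtual todd thm} by passing to the mixed quotient $X\times^GU$, and then to identify the remaining Todd factors using the $G$-torsor $Y\times U\to Y\times^GU$.

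First, I would fix an $l$-dimensional good pair $(V,U)$ for the integer $n-i$, so that it suffices to verify the $i$-th graded component of the formula after transporting through the isomorphism $A_i^G(X)\cong A_{i+l-g}(X\times^GU)$ induced by $s_U$. By Proposition~\ref{prop 4.11}, $\phi$ descends to a canonical almost perfect obstruction theory $\phi_U$ on $f_U\colon X\times^GU\to Y\times^GU$ with $s_U([\sO_X\virt])=[\sO_{X\times^GU}\virt]$ and $s_U([X]\virt)=[X\times^GU]\virt$. By Proposition~\ref{prop 4.12}, the coherent sheaves $F_i:=s_U(E_i)$ form a global presentation of the virtual tangent bundle of $f_U$ in the sense of Definition~\ref{def of presentation of virtual tangent}. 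The hypothesis that $X$ admits a $G$-invariant closed embedding into a smooth algebraic space $M$ with $G$-action descends to give a closed embedding $X\times^GU\hookrightarrow M\times^GU$ into a smooth algebraic space, so Theorem~\ref{virtual todd thm} applies to $f_U$ and yields
\begin{align*}
    \tau_{X\times^GU}([\sO_{X\times^GU}\virt])=\frac{\td(F_0)}{\td(F_1)}\cdot\td(f_U^\ast T_{Y\times^GU})\cap[X\times^GU]\virt.
\end{align*}

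Next, I would expand $f_U^\ast T_{Y\times^GU}$ in $K$-theory. Since $\pi_Y\colon Y\times U\to Y\times^GU$ is a free $G$-torsor, there is a short exact sequence of $G$-equivariant bundles on $Y\times U$
\begin{align*}
    0\lr\fG\otimes\oO_{Y\times U}\lr T_{Y\times U}\lr\pi_Y^\ast T_{Y\times^GU}\lr0,
\end{align*}
where $\fG$ carries its adjoint representation. Writing $T_{Y\times U}=p_Y^\ast T_Y\oplus T_U$ and $T_U\cong V\otimes\oO_U$ (with $V$ the chosen representation), one obtains $[\pi_Y^\ast T_{Y\times^GU}]=[p_Y^\ast T_Y]+[V]-[\fG]$ in $K_G^0(Y\times U)$. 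Pulling back along the $G$-equivariant morphism $(f,\id_U)\colon X\times U\to Y\times U$ and descending under the free diagonal $G$-action gives
\begin{align*}
    [f_U^\ast T_{Y\times^GU}]=s_U[f^\ast T_Y]+[\widetilde V]-s_U[\fG]\ \text{in}\ K^0(X\times^GU),
\end{align*}
where $\widetilde V:=X\times^G(U\times V)\to X\times^GU$ is the descended vector bundle, and Todd classes are multiplicative in short exact sequences.

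Finally, I would substitute into the Edidin-Graham formula in diagram~\eqref{fund diagram equivariant rr}. Since Todd classes commute with the flat pullback-descent operation $s_U$, the factor $\td(\widetilde V)$ arising from the $[V]$ term cancels the denominator $\td(X\times^G(U\times V))$, yielding
\begin{align*}
    (\tau_X^G)_i([\sO_X\virt])=\left(\frac{s_U(\td^G(E_0))}{s_U(\td^G(E_1))}\cdot\frac{s_U(\td^G(f^\ast T_Y))}{s_U(\td^G(\fG))}\cap s_U[X]\virt\right)_{i+l-g}.
\end{align*}
Summing over $i$ and reversing the identification $A_\ast^G(X)\cong A_{\ast+l-g}(X\times^GU)$ via $s_U$ produces the claimed formula. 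The main technical obstacle is the correct identification of the denominator $\td(X\times^G(U\times V))$ in the Edidin-Graham diagram with the Todd class of the vector bundle $\widetilde V$ on $X\times^GU$; once this is properly set up, the cancellation against the $[V]$ contribution coming from the torsor sequence is automatic and produces precisely the $\td^G(\fG)^{-1}$ correction that distinguishes the equivariant formula from its non-equivariant counterpart.
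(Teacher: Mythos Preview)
Your proposal is correct and follows essentially the same approach as the paper: reduce to the non-equivariant Theorem~\ref{virtual todd thm} on $X\times^G U$ via Propositions~\ref{prop 4.11} and~\ref{prop 4.12}, express $T_{Y\times^G U}$ via the $G$-torsor (equivalently, as the descent of the two-term complex $\fG\to\pi_Y^\ast T_Y\oplus\pi_U^\ast T_U$), and cancel the resulting $\td(\widetilde V)$ against the denominator in the Edidin--Graham diagram~\eqref{fund diagram equivariant rr}. The paper's proof is the same in structure and in all essential steps.
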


\begin{proof}
A $G$-invariant closed embedding of $X$ into a smooth algebraic space with a $G$-action induces a closed embedding of any $X \times^G U$ into a smooth algebraic space for any good pair $(V,U)$. By Propositions~\ref{prop 4.11}, \ref{prop 4.12} and Theorem~\ref{virtual todd thm}, we have for the almost perfect obstruction theory $\phi_U$
\begin{align} \label{loc 4.18}
    \tau_{X \times^G U}([\sO_{X \times^G U}\virt])  = \frac{\td(F_0)}{\td(F_1)} \td(f_U^\ast T_{Y \times^G U}) \cap [X \times^G U]\virt,
\end{align}
and hence, using \eqref{loc 4.15},
\begin{align} \label{loc 4.19}
    (\tau_X^G)_i([\sO_X\virt]) & = \left( \frac{\tau_{X \times^G U}}{\td(X \times^G (U \times V))} \right)_{i+l-g}(s_U([\sO_X\virt])) \\
    & = \left( \frac{\tau_{X \times^G U}}{\td(X \times^G (U \times V))} \right)_{i+l-g}([\sO_{X \times^G U}\virt]). \notag
\end{align}

Since $F_i = s_U(E_i)$, for any $B \in A_*^G(X)$ we have 
\begin{align} \label{loc 4.20}
    \td(F_i) \cap s_U(B) = \td^G(E_i) \cap B.
\end{align}

Moreover, $T_{Y \times^G U}$ is the descent of the equivariant perfect complex of amplitude $[-1,0]$
$$ \fG \lr \pi_Y^\ast T_Y \oplus \pi_U^\ast T_U $$
on $Y \times U$. Thus, since $T_U$ is equivariantly isomorphic to the vector bundle $V$ over $U$, we obtain
\begin{align} \label{loc 4.21}
    \td(f_U^\ast T_{Y \times^G U}) = \frac{\td^G(f^\ast T_Y)}{\td^G(f^\ast \fG)} \cdot \td^G(V).
\end{align}

Combining \eqref{loc 4.18}, \eqref{loc 4.19}, \eqref{loc 4.20} and \eqref{loc 4.21}, we get
\begin{align*}
    (\tau_X^G)_i([\sO_X\virt]) = \left( \frac{\td^G(E_0)}{\td^G(E_1)} \cdot \frac{\td^G(f^\ast T_Y)}{\td^G(\fG)} \cap [X]\virt \right)_i
\end{align*}
for any $i$, which concludes the proof.
\end{proof}

\subsection{An application in generalized Donaldson--Thomas theory} We conclude this section with an application of our results.
\medskip

Let $\mM$ be the moduli stack parametrizing Gieseker semistable coherent sheaves of a fixed Chern character on a smooth, projective Calabi--Yau threefold (cf. \cite{HuyLehn} for more background on Gieseker stability and moduli of sheaves). This is a global quotient stack of the form $\mM = [Q / G]$ arising from Geometric Invariant Theory (GIT) \cite{MFK}. As such, it admits a closed embedding into a smooth quotient stack $\aA = [P / G]$.
\medskip

In \cite{KLS}, the authors construct a canonical, proper \DM stack $\tilde{\mM} = [\tilde{Q} / G]$, called the intrinsic stabilizer reduction of $\mM$. By definition, it is equipped with a natural projection map $\pi \colon \tilde{\mM} \to \mM$ and a closed embedding into the smooth, proper \DM stack $\tilde{\aA} = [\tilde{P} / G]$, the intrinsic stabilizer reduction of $\aA$, which is also a GIT global quotient stack.

In addition, it is shown in \cite{KLS, KiemSavvas} that $\tilde{\mM}$ admits a natural almost perfect obstruction theory $\phi$, even though it is not quasi-smooth and thus does not carry a natural perfect obstruction theory. This consists of a $G$-equivariant \'{e}tale covering $\lbrace \tilde{Q}_\alpha \to \tilde{Q} \rbrace$, inducing an \'{e}tale covering $\lbrace \tilde{\mM}\lalp := [\tilde{Q}\lalp / G] \to \tilde{\mM} \rbrace$, together with perfect obstruction theories $\phi\lalp \colon E\lalp \to \bL_{\tilde{\mM}\lalp}$ of virtual dimension zero, induced by $G$-equivariant perfect obstruction theories $\psi\lalp \colon F\lalp \to \bL_{\tilde{Q}\lalp}$ by means of commutative diagrams
\begin{align*}
\xymatrix{
q\lalp^\ast E\lalp \ar[r] \ar[d]_-{q\lalp^\ast \phi\lalp} & F\lalp \ar[d]_-{\psi\lalp} \ar[r] & \fG^\vee \ar@{=}[d] \\
q\lalp^\ast \bL_{\tilde{\mM}\lalp} \ar[r] & \bL_{\tilde{Q}\lalp} \ar[r] & \fG^\vee,
}
\end{align*}
where the rows are exact triangles and $q\lalp \colon \tilde{Q}\lalp \to \tilde{\mM}\lalp$ is the quotient map.

These in turn are part of the data of a $G$-equivariant almost perfect obstruction theory $\psi$ on $\tilde{Q}$ and hence an almost perfect obstruction theory on the natural morphism $\tilde{\mM} \to BG$.

It is routine to check that the obstruction sheaves, virtual structure sheaves and virtual fundamental cycles induced on $\tilde{\mM}$ by $\phi$ and $\psi$ are equal and we denote them by $\Ob_{\tilde{\mM}}$, $[\sO_{\tilde{\mM}}\virt]$ and $[\tilde{\mM}]\virt$ respectively, as usual. Their $G$-equivariant counterparts on $\tilde{Q}$ are denoted in an analogous fashion. 
\medskip

Now, ${\mM}$ is also the classical truncation of a $(-1)$-shifted symplectic derived Artin stack $\boldsymbol{\mM}$ (cf. \cite{PTVV}). Hence, the results of \cite{HekSav} imply the existence of a canonical derived enhancement $\boldsymbol{\tilde{\mM}}$ of $\tilde{\mM}$ such that the $[0,1]$-truncation of $\bT_{\boldsymbol{\tilde{\mM}}}|_{\tilde{\mM}}$, the restriction of the derived tangent complex of $\boldsymbol{\tilde{\mM}}$ to its classical truncation $\tilde{\mM}$, is perfect. Denote this perfect complex by $E^\vee$ and its dual by $E$. 

$E$ is compatible with the almost perfect obstruction theory $\phi$ of $\tilde{\mM}$ in the sense that $E\lalp = E|_{\tilde{\mM}\lalp}$ and $h^1(E^\vee) = \Ob_{\tilde{\mM}}$ gives gluing data for the local obstruction sheaves $h^1(E\lalp^\vee)$. By definition, $h^0(E)=\Omega_{\tilde{\mM}}$.
\medskip

By construction, using the closed embedding $\tilde{\mM} \to \tilde{\aA}$, we may write $E^\vee = [E_0 \to E_1]$, where $E_0 = T_{\tilde{\aA}}|_{\tilde{\mM}}$ and $E_1$ is a locally free sheaf on $\tilde{\mM}$. Similarly, $\bL_{\tilde{\mM}}$ takes the form $[\fF^{-1} \to \Omega_{\tilde{A}}|_{\tilde{\mM}}]$, where $\fF^{-1}$ is the conormal sheaf of the closed embedding $\tilde{\mM} \to \tilde{\aA}$. In particular, there is a natural morphism $E^0 \to \bL_{\tilde{\mM}}$ such that $E^0 \to h^0(E) = \Omega_{\tilde{\mM}}$ factors through the map $\bL_{\tilde{\mM}} \to h^0(\bL_{\tilde{\mM}}) = \Omega_{\tilde{\mM}}$.
\medskip

In a similar vein, we may define $F^\vee = [F_0 \to F_1]$ with dual $F$, where $F_0 = T_{\tilde{P}}|_{\tilde{Q}}$ and $F_1$ is the $G$-equivariant bundle on $\tilde{Q}$ corresponding to the locally free sheaf $E_1$ on $\tilde{\mM}$. Like $E^\vee$, $F^\vee$ is compatible with the almost perfect obstruction theory $\psi$, satisfies $h^1(F^\vee) = \Ob_{\tilde{Q}}$, $h^0(F) = \Omega_{\tilde{Q}}$, and there is a natural $G$-equivariant morphism $F^0 \to \bL_{\tilde{Q}}$ such that $F^0 \to h^0(F) = \Omega_{\tilde{Q}}$ factors through $\bL_{\tilde{Q}} \to h^0(\bL_{\tilde{Q}})$.
\medskip

We have thus established the following.

\begin{prop}
$E^\vee$ is a global presentation of the virtual tangent bundle of $\tilde{\mM}$ with respect to the almost perfect obstruction theory $\phi$ and $F^\vee$ is a global presentation of the virtual tangent bundle of $\tilde{Q}$ with respect to the $G$-equivariant almost perfect obstruction theory $\psi$.
\end{prop}

Writing $F_0'$ for the $G$-equivariant bundle on $\tilde{Q}$ corresponding to the locally free sheaf $E_0$ on $\tilde{\mM}$, we have a short exact sequence
$$0 \lr \fG \lr F_0 \lr F_0' \lr 0$$
and therefore $\td^G(F_0) = \td^G(F_0') \td^G(\fG)$. Thus Theorem~\ref{vir todd class formula thm}, applied to the $G$-equivariant morphism $\tilde{Q} \to \mathrm{pt}$, implies that 
\begin{align*}
\td^{\mathrm{vir}, G} ( \tilde{Q} ) & = \td^{G}(T_{\tilde{Q}}\virt) \cap [\tilde{Q}]\virt = \\
& = \frac{\td^G(F_0)}{\td^G(F_1)} \cdot \frac{\td^G(f^\ast T_{\mathrm{pt}})}{\td^G(\fG)} \cap [\tilde{Q}]\virt = \frac{\td^G(F_0')}{\td^G(F_1)} \cap [\tilde{Q}]\virt 
\end{align*}
and hence, passing back to $\tilde{\mM}$, we may write
$$\td\virt(\tilde{\mM}) = \frac{\td(E_0)}{\td(E_1)} \cap [\tilde{\mM}]\virt = \td(T_{\tilde{\mM}}\virt) \cap [\tilde{\mM}]\virt.$$

Therefore, applying Theorem~\ref{thm:equiv GRR formula} to the $G$-equivariant morphism $\tilde{Q} \to \mathrm{pt}$, we obtain the following result.

\begin{thm}
For any $V \in K_G^0(\tilde{Q}) \cong K^0(\tilde{\mM})$, we have
\begin{align*}
\chi^G ( [\oO_{\tilde{Q}}\virt] \otimes V) = \int_{[\tilde{Q}]\virt} \ch^G(V) \td^{G}(T_{\tilde{Q}}\virt)
\end{align*}
or, equivalently,
\begin{align} \label{loc 5.22}
\chi ( [\oO_{\tilde{\mM}}\virt] \otimes V) = \int_{[\tilde{\mM}]\virt} \ch(V) \td(T_{\tilde{\mM}}\virt).
\end{align}
\end{thm}

In particular, since the virtual dimension of $\tilde{\mM}$ is zero, we obtain the following corollary by taking $V = \sO_{\tilde{\mM}}$.

\begin{cor}
$\chi ( [\oO_{\tilde{\mM}}\virt]) = \int_{[\tilde{\mM}]\virt} 1$.
\end{cor}

The left-hand side was defined in \cite{KiemSavvas} to be the (un-twisted) $K$-theoretic generalized Donaldson--Thomas invariant via Kirwan blowups associated to $\mM$, whereas the right-hand side the intersection-theoretic generalized Donaldson--Thomas invariant via Kirwan blowups associated to $\mM$ in \cite{KLS}. We have thus shown that these two invariants are equal, as expected.

More generally, the left-hand side of~\eqref{loc 5.22} is the $V$-twisted $K$-theoretic generalized Donaldson--Thomas invariant via Kirwan blowups associated to $\mM$. Another immediate consequence of~\eqref{loc 5.22} is its deformation invariance.

\section{Cosection Localization} \label{cos loc section}

In this last section, we examine the compatibility of the Riemann--Roch transformation, at the levels of generality considered earlier in the paper, with localization by cosection \cite{KiemLiCosection, KiemLiKTheory, KiemSavvasLoc}, a standard tool in the treatment of virtual cycles and virtual structure sheaves. Namely, we prove cosection localized versions of the Riemann--Roch theorems established in the preceding section.
\medskip

To fix notation, let $f \colon X \to Y$ be a $G$-equivariant morphism from an algebraic space $X$ with $G$-action to a smooth, pure-dimensional $G$-scheme $Y$ and suppose that $f$ is equipped with a $G$-equivariant almost perfect obstruction theory $\phi$. We also assume given a $G$-invariant cosection $\sigma \colon \Ob_X \to \oO_X$ and write $X(\sigma)$ for its vanishing locus.
\medskip

As usual, we begin with the non-equivariant case where the group $G$ is trivial. 

By \cite{KiemLiCosection, KiemLiKTheory, KiemSavvasLoc}, the existence of the cosection $\sigma$ implies that the virtual cycle $[X]\virt \in A_\ast (X)$ and virtual structure sheaf $[\oO_X\virt] \in K_0(X)$ can be naturally localized to the locus $X(\sigma)$. We thus obtain their cosection localized counterparts $[X]_{\loc}\virt \in A_\ast (X(\sigma))$ and $[\oO_{X, \loc}\virt] \in K_0(X(\sigma))$.

 The following theorem is a cosection localized Riemann--Roch theorem.

\begin{thm} \label{cosection localized non-equivariant vir RR}
Suppose that $X$ admits a closed embedding into a smooth algebraic space and let $E^\vee = [E_0 \to E_1]$ be a global presentation of the virtual tangent bundle of $f$. Then
\begin{align*}
    \tau_{X(\sigma)} ([\oO_{X, \loc}\virt]) = \td(E^\vee) \td(f^* T_Y) \cap [X]_\loc\virt.
\end{align*}
\end{thm}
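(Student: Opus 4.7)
The plan is to adapt the proof of Theorem~\ref{virtual todd thm} step by step to the cosection localized setting, upgrading every Gysin map and every application of Riemann-Roch to its cosection localized counterpart. First, I reproduce the geometric framework: the global presentation $E^\vee = [E_0 \to E_1]$ together with Lemma~\ref{auxiliary lemma} gives the diagram \eqref{loc 4.7} with Cartesian squares and the identification $M_1 = N_1$, while the cotangent distinguished triangle produces the non-canonical isomorphism $\bL_X \simeq [\fF^{-1} \to F^0]$ together with the exact sequence $0 \to E_0 \to F_0 \to f^*T_Y \to 0$. The cosection $\sigma \colon \Ob_X \to \oO_X$ pulls back via the composition $E_1 \to [E_1/E_0] \to \Ob_X$ to a cosection $\sigma \colon E_1 \to \oO_X$, and by the Kiem-Li-Savvas cosection localization machinery \cite{KiemLiCosection, KiemLiKTheory, KiemSavvasLoc}, the classes $[\oO_{C_1}]$ and $[C_1]$ lift canonically to classes supported on $C_1(\sigma) = C_1 \cap E_1(\sigma)$, so that $[\oO_{X,\loc}\virt] = 0^!_{E_1,\loc}[\oO_{C_1}]$ and $[X]_\loc\virt = 0^!_{E_1,\loc}[C_1]$ naturally live in $K_0(X(\sigma))$ and $A_*(X(\sigma))$ respectively.

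Next, I invoke Fantechi-G\"ottsche's local Todd computation \cite[Proposition~3.1]{VirRR}: since $X$ embeds into a smooth algebraic space, so does $C_1 \hookrightarrow E_1$, and hence $\tau_{C_1}(\oO_{C_1}) = \pi_1^*(\td F_0) \cap [C_1] = \pi_1^*(\td E_0)\,\pi_1^*(\td f^*T_Y) \cap [C_1]$. Because the cycle $[C_1]$ is cosection localized to $C_1(\sigma)$, this identity descends to an equality in $A_*(C_1(\sigma))$ once both sides are upgraded to their cosection localized versions using compatibility of cosection localization with cap product by pullback classes. Writing $k \colon C_1 \to E_1$ for the closed embedding and $p_1 \colon E_1 \to X$ for the projection, the module homomorphism property of $\tau$ and the lci compatibility applied through the zero section $X(\sigma) \hookrightarrow E_1|_{X(\sigma)}$, combined with the projection formula and $0^!_{E_1,\loc}\circ p_1^* = \id$, give
\begin{align*}
\tau_{X(\sigma)}([\oO_{X,\loc}\virt])
&= \td(-E_1) \cdot 0^!_{E_1,\loc}\bigl(\tau_{E_1,\loc}([\oO_{C_1}])\bigr) \\
&= \td(-E_1) \cdot 0^!_{E_1,\loc}\bigl(p_1^*(\td E_0 \cdot \td f^*T_Y) \cap k_*[C_1]_\loc\bigr) \\
&= \td(E^\vee)\, \td(f^*T_Y) \cap [X]_\loc\virt,
\end{align*}
which is the claim.

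The main obstacle is establishing the cosection localized analogue of the lci property (item (4) of \S\ref{background section}) for the zero section of $E_1$: namely, that $\tau_{X(\sigma)} \circ 0^!_{E_1,\loc} = \td(-E_1) \cdot 0^!_{E_1,\loc} \circ \tau_{E_1(\sigma)}$ on classes supported on $E_1(\sigma)$. This can be handled by reducing to the Kiem-Li construction of $0^!_{E_1,\loc}$ via deformation to the normal cone of the zero section, where the cosection localized Gysin map is built from the ordinary Gysin maps on the degeneration space and its special fiber, and exploiting the compatibility of Fulton's proof of the Riemann-Roch transformation with specialization along closed substacks. A parallel verification is needed for the cosection localized module homomorphism and projection formulas used above; these follow by analogous specialization arguments, since in each case the cosection localized operations differ from their global counterparts only by restriction to the preimage of $X(\sigma)$ in the relevant blowup or cone.
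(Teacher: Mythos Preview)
Your proposal is correct and follows essentially the same approach as the paper: adapt the proof of Theorem~\ref{virtual todd thm} verbatim and isolate the compatibility $\tau_{X(\sigma)} \circ 0^!_{E_1,\sigma} = \td(-E_1) \cdot 0^!_{E_1,\sigma} \circ \tau_{E_1(\sigma)}$ as the only new ingredient. The paper dispatches this step by invoking it as the special case $[F] = [\sO_{C_1}]$ of formula~(5.21) in the proof of \cite[Theorem~5.8]{KiemLiKTheory}, so you do not need to re-derive it via deformation to the normal cone.
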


\begin{proof}
The proof follows verbatim the notation and steps of the proof of Theorem~\ref{virtual todd thm}. We explain the necessary modifications.

\eqref{loc 4.10} still holds and thus we have
\begin{align*}
    \tau_{C_1} (\oO_{C_1}) = \pi_1^\ast (\td(E_0)) \pi_1^\ast (\td(f^\ast T_Y)) \cap [ C_1 ].
\end{align*}

Let $E_1(\sigma) = E_1|_{X(\sigma)} \cup \ker \left( \sigma|_{U(\sigma)} \right)$ considered as a closed algebraic subspace of the vector bundle $E_1$. The reduced support of $C_1$ lies in $E_1(\sigma)$ and, by definition, the cosection localized virtual cycle and virtual structure sheaf of $X$ are defined to be
\begin{align*}
    [X]_\loc\virt & = 0_{\Ob_X, \sigma}^! [\fc_{X/Y}] = 0_{E_1, \sigma}^! [C_1] \in A_\ast (X(\sigma)), \\
    [\oO_{X,\loc}\virt] & = 0_{\Ob_X, \sigma}^! [\sO_{\fc_{X/Y}}] = 0_{E_1, \sigma}^! [\sO_{C_1}] \in K_0(X(\sigma)),
\end{align*}
using the cosection localized Gysin maps in intersection theory and $K$-theory respectively, where $[\sO_{C_1}]$ can naturally be considered as an element of $K_0(E_1(\sigma))$.

To complete the proof, we need the cosection localized analogue of \eqref{loc 4.12}:
\begin{align*}
    \tau_{X(\sigma)}([\oO_{X, \loc}\virt]) = \td(-E_1) \cdot 0_{E_1, \sigma}^!(\tau_{E_1(\sigma)}([\sO_{C_1}])).
\end{align*}
But this is the special case $[F]=[\sO_{C_1}]$ of formula~(5.21) in the proof of \cite[Theorem~5.8]{KiemLiKTheory}.
\end{proof}

An immediate corollary is the following cosection localized Riemann--Roch formula.

\begin{thm}
When the restriction $f' = f|_{X(\sigma)} \colon X(\sigma) \to Y$ is proper, we have for any $V \in K^0(X(\sigma))$
\begin{align*}
     \mathrm{ch}(f_\ast'(V\otimes [\sO_{X,\loc}\virt])) \cdot \td(T_Y) \cap [Y] = f_\ast' \left( \mathrm{ch}(V) \cdot \td\virt(X/Y) \cap [X]_\loc\virt \right).
\end{align*}
\end{thm}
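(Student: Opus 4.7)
The plan is to derive this as a formal consequence of the cosection localized virtual Riemann--Roch theorem stated just above, together with the standard functorial properties (Todd, module, covariance) of Fulton's Riemann--Roch transformation $\tau$. The argument parallels exactly the deduction of the ordinary virtual Grothendieck--Riemann--Roch formula from the virtual Riemann--Roch theorem, as in Fantechi--G\"{o}ttsche \cite[Lemma~3.5]{VirRR} and the non-equivariant statement earlier in this paper; the only structural change is that the source of the pushforward is now $X(\sigma)$ rather than $X$, since both the cosection localized virtual structure sheaf and virtual fundamental cycle naturally live on $X(\sigma)$.

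The chain of manipulations I would write down is as follows. Using smoothness of $Y$ together with the Todd property of $\tau_Y$, the left-hand side is rewritten as
\[
    \mathrm{ch}(f'_\ast(V \otimes [\sO_{X,\loc}\virt])) \cdot \td(T_Y) \cap [Y] = \tau_Y \bigl( f'_\ast(V \otimes [\sO_{X,\loc}\virt]) \bigr).
\]
Properness of $f'$ and covariance of $\tau$ convert this to
\[
    \tau_Y \bigl( f'_\ast(V \otimes [\sO_{X,\loc}\virt]) \bigr) = f'_\ast \bigl( \tau_{X(\sigma)}(V \otimes [\sO_{X,\loc}\virt]) \bigr),
\]
and the module property of $\tau$ with respect to $K^0(X(\sigma))$ rewrites the integrand as
\[
    \tau_{X(\sigma)}(V \otimes [\sO_{X,\loc}\virt]) = \mathrm{ch}(V) \cap \tau_{X(\sigma)}([\sO_{X,\loc}\virt]).
\]
Finally, invoking Theorem~\ref{cosection localized non-equivariant vir RR} evaluates the inner term as $\td(E^\vee) \td(f^\ast T_Y) \cap [X]_\loc\virt$, which matches $\td\virt(X/Y) \cap [X]_\loc\virt$ in the shorthand of the statement. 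Assembling the chain yields the desired identity.

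No step here is substantive; the argument is essentially a bookkeeping exercise once Theorem~\ref{cosection localized non-equivariant vir RR} is in hand. The only points worth verifying are that $f'_\ast(V \otimes [\sO_{X,\loc}\virt])$ defines a class in $K_0(Y)$, which follows from properness of $f'$ together with the $K^0(X(\sigma))$-module structure on $K_0(X(\sigma))$, and that the three properties of $\tau$ used here carry over from schemes to algebraic spaces via \cite{Gillet}, as already recalled in Section~\ref{background section}. There is therefore no genuine obstacle beyond the input theorem.
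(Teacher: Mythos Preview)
Your proposal is correct and matches the paper's approach: the paper states this theorem as an ``immediate corollary'' of Theorem~\ref{cosection localized non-equivariant vir RR} without writing out a proof, and the formal deduction you give via the Todd, module, and covariance properties of $\tau$ (exactly as in \cite[Lemma~3.5]{VirRR}) is precisely what is intended. Your observation that $\td\virt(X/Y)$ in the statement is shorthand for $\td(E^\vee)\td(f^\ast T_Y)$ is also the correct reading.
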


Suppose now that the group $G$ is general. Our treatment of the equivariant Riemann--Roch theorem in Subsection~\ref{subsection 5.2} reduces the statement to the non-equivariant case. The following proposition allows us to follow the steps of Subsection~\ref{subsection 5.2} verbatim in the presence of a cosection.

\begin{prop}
Let $(V,U)$ be an $l$-dimensional good pair for the integer $n-i$ and $\phi_U$ be the induced almost perfect obstruction theory for the morphism $f_U \colon X \times^G U \to Y\times^G U$ (cf. Proposition~\ref{prop 4.11}).
Then, a $G$-invariant cosection $\sigma$ for $\phi$ induces a canonically defined cosection $\sigma_U = s_U (\sigma)$ for $\phi_U$ with vanishing locus $(X \times^G U) (\sigma_U) = X(\sigma) \times^G U$.
\end{prop}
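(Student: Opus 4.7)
The plan is to construct $\sigma_U$ by descent from the pullback of $\sigma$ along $\pi_X \colon X \times U \to X$, then identify its vanishing locus.

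First, since $\sigma \colon \Ob_X \to \oO_X$ is a $G$-equivariant morphism of $G$-equivariant coherent sheaves on $X$ (with $\oO_X$ carrying the trivial $G$-linearization, so ``$G$-invariant'' coincides with ``$G$-equivariant'' in the usual sense), pullback along the projection $\pi_X \colon X \times U \to X$ yields a $G$-equivariant morphism
\begin{equation*}
    \pi_X^\ast \sigma \colon \pi_X^\ast \Ob_X \lr \oO_{X \times U}
\end{equation*}
of $G$-equivariant sheaves on $X \times U$. Because the diagonal $G$-action on $X \times U$ is free, smooth descent along $X \times U \to X \times^G U$ produces a well-defined morphism
\begin{equation*}
    \sigma_U \defeq s_U(\sigma) \colon s_U(\Ob_X) \lr \oO_{X\times^G U}.
\end{equation*}
By Proposition~\ref{prop 4.11}, the obstruction sheaf of $\phi_U$ is precisely $s_U(\Ob_X)$, so $\sigma_U$ is a cosection for $\phi_U$ by construction. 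The fact that it is canonically defined follows from the canonicity of pullback and descent.

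Next, I would identify the vanishing locus. By definition, $X(\sigma) \subseteq X$ is the closed subspace where $\sigma$ fails to be surjective (equivalently, where the composition $\sigma$ vanishes on sections of $\Ob_X$ in the usual sense of \cite{KiemLiCosection}). Pulling back preserves this locus, so the vanishing locus of $\pi_X^\ast \sigma$ is $X(\sigma) \times U$ inside $X \times U$. Since $\sigma$ is $G$-invariant, $X(\sigma)$ is a $G$-invariant closed subspace of $X$, hence $X(\sigma) \times U$ is a $G$-invariant closed subspace of $X \times U$ whose descent to $X \times^G U$ is exactly $X(\sigma) \times^G U$. On the other hand, the vanishing locus of the descended morphism $\sigma_U$ is, by definition of descent, the image of the vanishing locus of $\pi_X^\ast \sigma$ under the quotient $X \times U \to X \times^G U$. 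Comparing these two descriptions gives the claimed equality
\begin{equation*}
    (X \times^G U)(\sigma_U) = X(\sigma) \times^G U.
\end{equation*}

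The main (very mild) obstacle is verifying that the descent step genuinely produces a morphism of coherent sheaves on $X \times^G U$ with target $\oO_{X \times^G U}$ whose associated cosection data is compatible with the almost perfect obstruction theory $\phi_U$ constructed in Proposition~\ref{prop 4.11}. This reduces to checking that the descent datum on $\pi_X^\ast \Ob_X$ used to obtain $s_U(\Ob_X)$ and the descent datum on $\oO_{X \times U}$ (which is the trivial one) are intertwined by $\pi_X^\ast \sigma$; but this is immediate from the $G$-equivariance of $\sigma$, so no essential difficulty arises.
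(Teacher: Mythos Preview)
Your proposal is correct and follows essentially the same approach as the paper: both arguments observe that the obstruction sheaf of $\phi_U$ is $s_U(\Ob_X)$ and that $s_U(\sO_X) = \oO_{X\times^G U}$, so applying $s_U$ to $\sigma$ yields the desired cosection. The paper's proof is a two-line remark to this effect, whereas you unpack what $s_U$ means (pullback along $\pi_X$ followed by descent along the free quotient) and explicitly trace the vanishing locus through these steps; this is just a more detailed rendering of the same argument.
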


\begin{proof}
The statement follows immediately, since the obstruction sheaf of $\phi_U$ is $s_U(\Ob_X)$ and we also have $s_U(\sO_X) = \sO_{X \times^G U}$.
\end{proof}

Using Theorem~\ref{cosection localized non-equivariant vir RR}, the proof of Theorem~\ref{vir todd class formula thm} goes through following the steps line by line to yield the following cosection localized virtual Riemann--Roch theorem.

\begin{thm}
Suppose that $X$ admits a $G$-invariant closed embedding into a smooth algebraic space with a $G$-action and let $E^\vee = [E_0 \to E_1]$ be a $G$-equivariant global presentation of the virtual tangent bundle of $f$. Then
\begin{align*}
    \tau_{X(\sigma)}^G ([\sO_{X,\loc}\virt]) = \frac{\td^G(E_0)}{\td^G(E_1)} \cdot \frac{\td^G(f^\ast T_Y)}{\td^G(\fG)} \cap [X]_{\loc}\virt \in A_\ast^G(X(\sigma)),
\end{align*}
where we set $T_{X/Y}\virt = [E_0] - [E_1] + [f^\ast T_Y] - [\fG] \in K_G^0(X)$ and $\fG$ is the adjoint representation of $G$.
\end{thm}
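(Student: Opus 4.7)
The plan is to mirror the proof of Theorem~\ref{vir todd class formula thm} verbatim, with two modifications: replace the non-equivariant virtual Riemann-Roch Theorem~\ref{virtual todd thm} by its cosection-localized counterpart Theorem~\ref{cosection localized non-equivariant vir RR}, and upgrade Proposition~\ref{prop 4.11} and Proposition~\ref{prop 4.12} to keep track of the cosection and its vanishing locus. Concretely, fix an $l$-dimensional good pair $(V,U)$ for the integer $n-i$. By the preceding proposition, $\phi$ induces an almost perfect obstruction theory $\phi_U$ on $f_U\colon X\times^G U\to Y\times^G U$ equipped with the cosection $\sigma_U=s_U(\sigma)$ whose vanishing locus is $X(\sigma)\times^G U$. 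Proposition~\ref{prop 4.12} supplies the global presentation $F^\vee=[F_0\to F_1]$ with $F_i=s_U(E_i)$.

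The key compatibility I would verify first is the cosection-localized analogue of \eqref{loc 4.15}, namely
\begin{align*}
s_U([\sO_{X,\loc}\virt])=[\sO_{X(\sigma)\times^G U,\loc}\virt],\quad s_U([X]_\loc\virt)=[X\times^G U]_\loc\virt.
\end{align*}
This is formal from the construction of the cosection-localized Gysin maps $0_{\Ob_X,\sigma}^{!,G}$ and $0_{s_U(\Ob_X),\sigma_U}^!$ on affine equivariant atlases: pulling back along $\pi_X$ and descending to $X\times^G U$ sends $\ker(\sigma)$ to $\ker(\sigma_U)$ and preserves the support conditions used to define the localized Gysin map. Since $X\times^G U$ inherits a closed embedding into a smooth algebraic space from the hypothesis on $X$, Theorem~\ref{cosection localized non-equivariant vir RR} applies to $\phi_U$ with cosection $\sigma_U$, yielding
\begin{align*}
\tau_{X(\sigma)\times^G U}([\sO_{X(\sigma)\times^G U,\loc}\virt])=\td(F_0)\td(F_1)^{-1}\td(f_U^\ast T_{Y\times^G U})\cap [X\times^G U]_\loc\virt.
\end{align*}

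From here I would follow the proof of Theorem~\ref{vir todd class formula thm} line by line. Using the diagram~\eqref{fund diagram equivariant rr} defining $(\tau_{X(\sigma)}^G)_i$, the identifications $\td(F_i)\cap s_U(B)=\td^G(E_i)\cap B$ for $B\in A_\ast^G(X(\sigma))$, and the factorization $\td(f_U^\ast T_{Y\times^G U})=\td^G(f^\ast T_Y)\td^G(\fG)^{-1}\td^G(V)$ coming from the distinguished triangle $\fG\to \pi_Y^\ast T_Y\oplus \pi_U^\ast T_U\to T_{Y\times^G U}$, the $\td^G(V)$ factor cancels against the $\td(X\times^G (U\times V))^{-1}$ appearing in the definition of $(\tau^G)_i$, exactly as in the equivariant non-localized case. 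Assembling these componentwise over $i$ gives the stated formula in $A_\ast^G(X(\sigma))$.

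The main obstacle is the first compatibility: making sure that the cosection-localized $K$-theoretic Gysin map $0_{\Ob_X,\sigma}^{!,G}$ introduced in Section~\ref{equivariant K-theory and Gysin maps background section} really does commute with the assignment $\fF\mapsto s_U(\fF)$, so that the localized virtual structure sheaf descends correctly to $X(\sigma)\times^G U$. Once that descent is established, the rest of the argument is formal bookkeeping that inherits the non-equivariant localized Riemann-Roch theorem along a smooth descent from $X(\sigma)\times U$ to $X(\sigma)\times^G U$.
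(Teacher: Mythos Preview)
Your proposal is correct and follows exactly the approach of the paper, which simply states that the proof of Theorem~\ref{vir todd class formula thm} goes through line by line once Theorem~\ref{virtual todd thm} is replaced by its cosection-localized version Theorem~\ref{cosection localized non-equivariant vir RR} and the preceding proposition supplies the induced cosection $\sigma_U$ on $X\times^G U$. One cosmetic point: the localized virtual structure sheaf should be written $[\sO_{X\times^G U,\loc}\virt]$ rather than $[\sO_{X(\sigma)\times^G U,\loc}\virt]$, since it is the virtual structure sheaf of $X\times^G U$ localized to the vanishing locus $X(\sigma)\times^G U$.
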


\bibliography{Master}
\bibliographystyle{plain}

\end{document}